\documentclass[12pt]{amsart}
\usepackage{amsmath,amssymb,amsthm}
\usepackage{mathrsfs}
\usepackage{color}
\usepackage{url}
\usepackage[english]{babel}
\usepackage[T1]{fontenc}
\usepackage{a4wide}
\usepackage[urlcolor=blue,colorlinks=true]{hyperref}
\usepackage{ifthen}
\usepackage{mathtools}
\usepackage{booktabs}
\usepackage{pgfplots}
\pgfplotsset{compat=1.18}
\usepackage{amsmath}
\usepackage{amssymb}

\numberwithin{equation}{section}
\nonstopmode

\theoremstyle{plain}
\newtheorem{theorem}[equation]{Theorem}
\newtheorem{lemma}[equation]{Lemma}
\newtheorem{proposition}[equation]{Proposition}

\theoremstyle{definition}
\newtheorem{definition}[equation]{Definition}
\newtheorem{example}[equation]{Example}
\newtheorem{remark}[equation]{Remark}

\begin{document}

\bibliographystyle{amsplain}

\title[Characterizations of Extremal Hyperbolic Rates]%
{Characterizations of Extremal Hyperbolic Rates via Herglotz Measures and Koenigs Linearization}

\author[R.~Kargar]{Rahim Kargar}
\address{Department of Mathematics and Statistics,
  University of Turku, Turku, Finland}
\email{rahim.r.kargar@utu.fi}

\keywords{Extremal hyperbolic rates; Herglotz measures; Koenigs linearization; Hyperbolic metric; Denjoy--Wolff theorem; Quasiconformal mappings.}
\subjclass[2020]{30D05, 37F44, 30C35}

\begin{abstract}
Let $g:\mathbb{D}\to\mathbb{D}$ be a hyperbolic holomorphic self-map of
the unit disk with Denjoy--Wolff point $\tau\in\partial\mathbb{D}$ and
angular derivative $\alpha\in(0,1)$. We say that $g$ has an
\textit{extremal hyperbolic rate} if its forward iterates satisfy the
sharp metric asymptotic
\begin{equation*}
\rho_{\mathbb{D}}(g^{\circ n}(z),w)
=
n\log\frac{1}{\alpha}
+
O(1)
\quad\text{as }n\to\infty,
\end{equation*}
for every $z,w\in\mathbb{D}$. Using the Herglotz--Nevanlinna
representation, we prove that $g$ has an extremal hyperbolic rate if and
only if the associated boundary measure $\sigma$ satisfies
\begin{equation*}
\int_{\partial\mathbb{D}\setminus\{\tau\}}
\log\frac{1}{|\zeta-\tau|}
\,d\sigma(\zeta)
<
\infty.
\end{equation*}
We further show that this condition is equivalent to a non-degenerate
angular asymptotic of the Koenigs linearization: for any conformal map
$\phi_\tau:\mathbb{D}\to\mathbb{H}$ with $\phi_\tau(\tau)=\infty$,
\begin{equation*}
0<
\left|
\angle\lim_{z\to\tau}
\frac{h(z)}{\phi_\tau(z)}
\right|
<
\infty,
\end{equation*}
where $h$ is a Koenigs function.
We then extend the extremal-rate theory beyond the unit disk. For
finitely connected hyperbolic planar domains, the disk characterizations
transfer to ordinary boundary points of the associated deck
transformation group. For holomorphic self-maps of the unit ball
$\mathbb{B}^n$ and for $K$-quasiconformal self-maps, where no comparable
Herglotz representation is available; we establish sufficient boundary
regularity conditions that guarantee the extremal hyperbolic rate. The
quasiconformal result is further extended to finitely connected planar
domains at ordinary boundary points. These results show that, across
the settings considered here, extremal hyperbolic growth is governed by
the non-degeneracy of the boundary linearization at the Denjoy--Wolff
point.
\end{abstract}
\maketitle

\tableofcontents
%%=============================================================
\section{Introduction}
\label{sec:intro}
%%=============================================================

The long-term asymptotic behavior of the iterates of a holomorphic self-map
of the open unit disk
\begin{equation*}
\mathbb D=\{z\in\mathbb C:|z|<1\}
\end{equation*}
is governed by the classical Denjoy--Wolff theorem. This area remains an
active subject of research, with numerous recent developments concerning the iteration theory and asymptotic behavior of holomorphic self-maps; see,
for instance, \cite{Abate, Bak-Pom, bracci2020, Bracci2010, Cowen81,  CDG2025parabolic, CDG2024, Mac, Pom79, Shoikhet} and the references therein. If
$g:\mathbb D\to\mathbb D$ is a holomorphic self-map with no fixed point in
$\mathbb D$, then by the classical Wolff lemma (see, for instance,
\cite{Abate}) there exists a unique point
$\tau\in\partial\mathbb D$, called the Denjoy--Wolff point of $g$, such
that
\begin{equation*}
g^{\circ n}\longrightarrow\tau
\end{equation*}
uniformly in compact subsets of $\mathbb D$. Moreover, by the classical
Julia--Wolff--Carath\'eodory theorem, $\tau$ is a boundary fixed point in
the non-tangential sense, and the angular derivative
\begin{equation*}
\alpha
:=
\angle\lim_{z\to\tau}g'(z)
\end{equation*}
exists with $\alpha\in(0,1]$; see, e.g.,
\cite[Theorem~1.8.4]{bracci2020} or \cite[p.~78]{Shapiro}. Equivalently,
$\alpha$ is the Julia--Carath\'eodory quotient
\begin{equation*}
\alpha
=
\lim_{z\to\tau}\inf
\frac{1-|g(z)|}{1-|z|}.
\end{equation*}
When $\alpha\in(0,1)$, the map $g$ is called \textit{hyperbolic}
(see \cite[Definition~1.8.5]{bracci2020}). In this case, the hyperbolic
distance of every forward orbit from any fixed reference point grows
asymptotically with slope $\log(1/\alpha)$:
\begin{equation*}
\frac{\rho_{\mathbb D}(g^{\circ n}(z),w)}{n}
\longrightarrow
\log\frac1\alpha
\quad\text{as }n\to\infty,
\end{equation*}
for every $z,w\in\mathbb D$ (see Section~\ref{sec:prelim} below).

Although the angular derivative completely determines the first-order
asymptotic growth rate, it does not determine the finer behavior of the
remainder. A natural question is whether the hyperbolic distance admits
the sharper expansion
\begin{equation*}
\rho_{\mathbb D}(g^{\circ n}(z),w)
=
n\log\frac1\alpha+O(1),
\end{equation*}
or whether the remainder may instead be unbounded. More generally, one
may ask which boundary features of the map determine the size of the
deviation from the asymptotic linear profile. In other words, is there a
natural finer invariant that distinguishes hyperbolic self-maps having the
same angular derivative but different asymptotic behavior near their common
Denjoy--Wolff point?

The present paper investigates this finer asymptotic behavior beyond the
first-order linear growth. We say that a hyperbolic self-map $g$ has an
\textit{extremal hyperbolic rate} if
\begin{equation*}
\rho_{\mathbb D}(g^{\circ n}(z),w)
-
n\log\frac1\alpha
\end{equation*}
remains bounded as $n\to\infty$
(Definition~\ref{def:extremal_rate} below). This property distinguishes
hyperbolic self-maps having the same angular derivative but potentially
different asymptotic behavior near their common Denjoy--Wolff point.

This metric problem is also closely related to the theory of composition
operators. For the composition operator
\begin{equation*}
C_g:f\mapsto f\circ g
\end{equation*}
acting on the Hardy space $H^2(\mathbb D)$, the asymptotic behavior of the
iterates of $g$ plays an important role in the spectral, compactness, and
Fredholm properties of $C_g$; see, e.g.,
Elliott and Jury~\cite{elliott}, Moorhouse~\cite{moor}, Cowen and MacCluer
\cite[Chapter~7]{CowenMacCluer}, or
Shapiro~\cite[Chapter~5]{Shapiro}. The extremal metric condition studied
here identifies a natural geometric threshold for the boundedness of the
remainder in the asymptotic orbit expansion.

Our first main result gives a complete characterization of the extremal
hyperbolic rate in terms of the boundary concentration of the
Herglotz--Nevanlinna measure associated with $g$. Specifically, if
$\sigma$ denotes the positive Borel measure arising from the
Herglotz--Nevanlinna representation of $g$, then $g$ has an extremal
hyperbolic rate if and only if
\begin{equation}
\label{eq:intro_logmoment}
\int_{\partial\mathbb D\setminus\{\tau\}}
\log\frac1{|\zeta-\tau|}
\,d\sigma(\zeta)
<
\infty.
\end{equation}
This characterization is obtained by transferring the dynamics to the
upper half-plane via a Cayley transform and adapting the metric criterion
of Cruz-Zamorano and Zarvalis~\cite[Theorem~4.4]{cruz}.

Our second main result establishes an equivalent formulation in terms of
the Koenigs linearizing function. We prove that the extremal hyperbolic
rate holds if and only if the Koenigs function $h$ has a finite and
nonzero angular ratio with a conformal coordinate sending the
Denjoy--Wolff point to infinity, namely,
\begin{equation*}
0<
\left|
\angle\lim_{z\to\tau}
\frac{h(z)}{\phi_\tau(z)}
\right|
<
\infty,
\end{equation*}
where $\phi_\tau:\mathbb D\to\mathbb H$ is any conformal map such that   $\phi_\tau(\tau)=\infty$, and $h$ is a Koenigs function. This provides an intrinsic geometric interpretation of the logarithmic integrability condition
\eqref{eq:intro_logmoment}, relating the boundary concentration of the
Herglotz measure to the boundary behavior of the linearizing coordinate.

The remainder of the paper is organized as follows.
Section~\ref{sec:prelim} reviews the necessary background on hyperbolic
geometry, angular derivatives, and Herglotz--Nevanlinna representations.
A key ingredient is a precise asymptotic expansion for the hyperbolic
distance near the boundary,
\begin{equation*}
\rho_{\mathbb D}(z,w)
=
\log\frac1{1-|z|^2}
+
O(1),
\end{equation*}
together with an explicit sharp constant in the corresponding error
estimate.

Section~\ref{sec:uds} develops the theory in the unit disk. After
transferring the dynamics to the upper half-plane, we establish the
Herglotz characterization of extremality and the equivalent criterion in
terms of the Koenigs function. We also investigate the non-extremal
regime, describing how the concentration of the Herglotz measure near the
Denjoy--Wolff point influences the growth of the metric remainder when
the logarithmic integrability condition fails.

Section~\ref{sec:higher} extends the theory of extremal rate to several
broader settings. For simply connected hyperbolic domains, the Herglotz
and Koenigs characterizations transfer directly through the Riemann
mapping theorem. For hyperbolic domains with finitely many boundary
components, the corresponding equivalence is established at ordinary
boundary points of the associated Fuchsian deck group, while punctures
require a separate treatment. We then consider holomorphic self-maps of
the unit ball $\mathbb B^n$ and $K$-quasiconformal self-maps. In these
higher-dimensional and quasiconformal settings, we obtain sufficient
boundary regularity conditions guaranteeing the extremal linear rate,
rather than complete characterizations of the type available in the
one-dimensional holomorphic setting. Finally, the quasiconformal result
is extended to finitely connected planar domains under the same
restriction to ordinary boundary points of the deck group.\\
\noindent
\textbf{Relation to prior work.}
The iteration theory of holomorphic self-maps of the unit disk and the
upper half-plane is well established; see, for example,
Abate~\cite[Section~1.3]{Abate},
Bracci, Contreras, and D\'iaz-Madrigal
\cite[Section~1.8]{bracci2020},
and Shoikhet~\cite[Chapter~1]{Shoikhet}. The linearization of hyperbolic
self-maps via Schr\"oder's equation \eqref{eq:koenigs_func} originates in
the pioneering work of Koenigs~\cite{K}.

Cruz-Zamorano and Zarvalis~\cite{cruz} proved that extremal convergence
in the upper half-plane is equivalent to the logarithmic integrability
of the associated Nevanlinna measure. The present work adapts this
framework to the unit disk, establishes the corresponding characterization
in terms of the boundary behavior of the Koenigs function, and extends
the metric theory to multiply connected domains and to higher-dimensional
and quasiconformal dynamics.

The relationship between Koenigs functions and composition operators is
a central theme in operator theory; see Cowen and
MacCluer~\cite[Chapter~7]{CowenMacCluer}. The results of this paper
complement that perspective by showing that the boundary behavior of the
linearizing function provides a precise geometric criterion for the
extremal asymptotic behavior of hyperbolic orbits.

%%=============================================================
\section{Background and the extremal rate condition}
\label{sec:prelim}
%%=============================================================
Throughout this paper, $f\asymp g$ means that the ratio
$f/g$ is bounded above and below by positive constants
depending only on fixed parameters, and $f\sim g$ means
$f/g\to 1$ in the relevant limit.
We write $f=O(g)$ if $|f|\le Cg$ for some constant $C>0$
independent of the variable of interest, and $f=o(g)$ if
$f/g\to 0$. When $g=1$ the notation $O(1)$ denotes a
quantity that is bounded independently of the iteration
index $n$, although it may depend on fixed data such
as the map $g$, the base points $z$ and $w$, and the Stolz
parameter $M>1$.
\subsection{Hyperbolic geometry of the disk and the ball}
\label{subsec:hyp_geom}

Let $\mathbb{D}=\{z\in\mathbb{C}:|z|<1\}$ be the unit disk and
$\mathbb{H}=\{z\in\mathbb{C}:\operatorname{Im}z>0\}$ be the
upper half-plane. Set $\overline{\mathbb{D}}=\mathbb{D}\cup \partial \mathbb{D}$. Also, let
$\mathbb{B}^n=\{z\in\mathbb{C}^n:|z|<1\}$ be
the unit ball in $\mathbb{C}^n$.
We use the hyperbolic metric normalized to have a Gaussian
curvature $-1$, so that its density in $\mathbb{D}$ is
\begin{equation*}
\label{lambda_density}
\lambda_{\mathbb{D}}(z)\,|dz|
=\frac{2\,|dz|}{1-|z|^2},
\end{equation*}
and the hyperbolic distance between $z,w\in\mathbb{D}$ is
\begin{equation}
\label{eq:hyp_dist}
\rho_{\mathbb{D}}(z,w)
=2\tanh^{-1} \left|\frac{z-w}{1-\bar w z}\right|
=\log\frac{1+|\phi_w(z)|}{1-|\phi_w(z)|},
\end{equation}
where
\begin{equation}
\label{phi}
\phi_w(z)=\frac{z-w}{1-\bar w z}
\end{equation}
is the M\"obius automorphism of $\mathbb{D}$ mapping $w$
to $0$; see~\cite[Theorem~2.2]{BM}. The quantity $|\phi_w(z)|$ is called the pseudo--hyperbolic distance.

In the unit ball $\mathbb{B}^n$ we use the same curvature
normalization $-1$ such that
\begin{equation*}
\rho_{\mathbb{B}^n}(z,w)
=\inf_\gamma\int_\gamma\frac{2\,|d\xi|}{1-|\xi|^2},
\end{equation*}
where the infimum is taken over all rectifiable curves $\gamma$
joining $z$ to $w$ in $\mathbb{B}^n$.
In dimension $n=1$, this agrees with~\eqref{eq:hyp_dist}.
The hyperbolic metric $\rho_{\mathbb{B}^n}$ of the unit ball $\mathbb{B}^n$ can be expressed as follows (see, e.g., \cite[p. 40]{B83} or \cite[p. 55]{HKV}):
\begin{equation}
\label{hyperb-unit ball}
\sinh \frac{\rho_{\mathbb{B}^n}(z,w)}{2}
=\frac{|z-w|}{\sqrt{1-|z|^2}\sqrt{1-|w|^2}}.
\end{equation}

The following lemma is the key asymptotic estimate that
relates the hyperbolic distance to the Euclidean boundary distance.
We include a self-contained proof because the precise
uniformity in the Stolz regions is used repeatedly in what
follows.

\begin{lemma} \label{lem:boundary_asymp}
Let $w\in\mathbb{D}$ be fixed. As $z\to\tau$ for any $\tau\in\partial\mathbb{D}$, we have
\begin{equation} \label{eq:boundary_asymp}
\rho_{\mathbb{D}}(z,w) =\log\frac{1}{1-|z|^2}+O(1),
\end{equation}
where the $O(1)$ term is uniform in $\mathbb{D}$ with a bound depending only on $w$. More precisely,
\begin{equation} \label{eq:boundary_asymp_precise}
\left|\rho_{\mathbb{D}}(z,w) -\log\frac{1}{1-|z|^2}\right| \le\log\frac{4(1+|w|)}{1-|w|}
\end{equation}
for all $z\in\mathbb{D}$. The inequality is sharp.
\end{lemma}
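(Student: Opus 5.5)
The plan is to write the hyperbolic distance exactly in terms of $1-|z|^2$ plus an explicit remainder, and then bound that remainder by elementary estimates. Fix $w\in\mathbb D$, write $t=|\phi_w(z)|\in[0,1)$ with $\phi_w$ as in \eqref{phi}, and recall the standard identity
\begin{equation*}
1-t^2=\frac{(1-|z|^2)(1-|w|^2)}{|1-\bar w z|^2}.
\end{equation*}
Starting from \eqref{eq:hyp_dist}, I multiply numerator and denominator by $1+t$:
\begin{equation*}
\rho_{\mathbb D}(z,w)=\log\frac{(1+t)^2}{1-t^2}
=\log\frac{1}{1-|z|^2}+R(z),\qquad
R(z):=\log\frac{(1+t)^2\,|1-\bar w z|^2}{1-|w|^2}.
\end{equation*}
This is an exact identity, valid for every $z\in\mathbb D$. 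So \eqref{eq:boundary_asymp}, with its uniformity, follows once $|R|$ is bounded by a constant depending only on $w$.

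For the upper bound I use $(1+t)^2\le 4$ and $|1-\bar w z|\le 1+|w|$. These give
\begin{equation*}
R(z)\le\log\frac{4(1+|w|)^2}{1-|w|^2}=\log\frac{4(1+|w|)}{1-|w|}.
\end{equation*}
For the lower bound I use $(1+t)^2\ge 1$ and $|1-\bar w z|\ge 1-|w|$. These give
\begin{equation*}
R(z)\ge\log\frac{1-|w|}{1+|w|}.
\end{equation*}
Since $\frac{1-|w|}{1+|w|}\ge\frac{1-|w|}{4(1+|w|)}$, this lower bound is $\ge-\log\frac{4(1+|w|)}{1-|w|}$. Together the two bounds give \eqref{eq:boundary_asymp_precise} for all $z\in\mathbb D$. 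The lower bound is in fact stronger, but the symmetric form is all that is claimed.

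For sharpness, I show that the constant $\log\frac{4(1+|w|)}{1-|w|}$ cannot be decreased, because $R$ approaches it. If $w\neq0$, let $z\to-w/|w|$ radially. Then $|1-\bar w z|\to1+|w|$, and $t\to1$ because $z$ tends to the boundary while $w$ stays fixed. Hence $R(z)\to\log\frac{4(1+|w|)}{1-|w|}$. If $w=0$, then $R(z)=\log(1+|z|)^2\to\log4$ as $|z|\to1$.

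The main point of care is interpreting ``sharp'': the supremum of $|R|$ is approached on the boundary but not attained in $\mathbb D$, so sharpness means optimality of the constant, not equality at some point. Apart from that, the only nontrivial ingredient is the identity for $1-t^2$, which is standard.
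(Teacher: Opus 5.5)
Your proof is correct and follows the paper's argument essentially step for step. You use the same exact identity $\rho_{\mathbb D}(z,w)-\log\frac{1}{1-|z|^2}=\log\frac{|1-\bar wz|^2}{1-|w|^2}+2\log(1+|\phi_w(z)|)$ and the same elementary bounds $1-|w|\le|1-\bar wz|\le 1+|w|$ and $1\le 1+|\phi_w(z)|<2$, and you prove sharpness by letting $z$ tend to $-w/|w|$ (the paper computes the boundary limit $\log\frac{4|1-\bar w\tau|^2}{1-|w|^2}$ for every $\tau\in\partial\mathbb D$ and maximizes over $\tau$, which amounts to the same thing).
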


\begin{proof}
Fix $w\in\mathbb{D}$ and let $\phi_w$ be defined as in~\eqref{phi}.
We begin with the standard identity for the pseudo--hyperbolic distance
\begin{equation}\label{identity-1-phiwz}
1-|\phi_w(z)|^2 = \frac{(1-|z|^2)(1-|w|^2)}{|1-\bar w z|^2}.
\end{equation}
By factoring the left--hand side, we can isolate $1-|\phi_w(z)|$
\begin{equation}\label{eq:one_minus_phi}
1-|\phi_w(z)| = \frac{1-|\phi_w(z)|^2}{1+|\phi_w(z)|} = \frac{(1-|z|^2)(1-|w|^2)}{|1-\bar w z|^2 (1+|\phi_w(z)|)}.
\end{equation}
Taking the reciprocal logarithm of both sides of~\eqref{eq:one_minus_phi} gives
\begin{equation}\label{eq:neg_log_phi}
-\log(1-|\phi_w(z)|) = \log\frac{1}{1-|z|^2} + \log\frac{|1-\bar w z|^2}{1-|w|^2} + \log(1+|\phi_w(z)|).
\end{equation}
From the definition of the hyperbolic metric in~\eqref{eq:hyp_dist}, we have
\begin{equation*}
\rho_{\mathbb{D}}(z,w) =  \log(1+|\phi_w(z)|)-\log(1-|\phi_w(z)|).
\end{equation*}
Substituting the expression from~\eqref{eq:neg_log_phi} into this formula yields the exact identity of the error term
\begin{equation}\label{eq:exact_error}
\rho_{\mathbb{D}}(z,w) - \log\frac{1}{1-|z|^2} = \log\frac{|1-\bar w z|^2}{1-|w|^2} + 2\log(1+|\phi_w(z)|).
\end{equation}
To find a uniform upper bound for the absolute value of this difference, we apply standard geometric estimates in the unit disk. For all $z, w \in \mathbb{D}$, the triangle inequality gives:
\begin{equation*}
1-|w| \le |1-\bar w z| \le 1+|w|.
\end{equation*}
Furthermore, since $|\phi_w(z)| < 1$, the expression $1+|\phi_w(z)|$ is strictly bounded between $1$ and $2$. Applying these bounds to the two components of the error term in~\eqref{eq:exact_error} separately, we obtain
\begin{equation*}
\log\frac{(1-|w|)^2}{1-|w|^2} + 2\log(1) \le \log\frac{|1-\bar w z|^2}{1-|w|^2} + 2\log(1+|\phi_w(z)|) \le \log\frac{(1+|w|)^2}{1-|w|^2} + 2\log(2).
\end{equation*}
Simplifying the fractions inside the logarithms gives
\begin{equation*}
\log\frac{1-|w|}{1+|w|} \le \rho_{\mathbb{D}}(z,w) - \log\frac{1}{1-|z|^2} \le \log\frac{1+|w|}{1-|w|} + \log 4 = \log\frac{4(1+|w|)}{1-|w|}.
\end{equation*}
Since $\log\frac{1-|w|}{1+|w|} < 0$, the lower bound in absolute value equals $\log\frac{1+|w|}{1-|w|}$, which is strictly smaller than the upper bound $\log\frac{4(1+|w|)}{1-|w|}$. Therefore, both tails of the two-sided estimate are controlled by the upper bound, and we secure the uniform estimate:
\begin{equation*}
\left|\rho_{\mathbb{D}}(z,w) - \log\frac{1}{1-|z|^2}\right| \le \log\frac{4(1+|w|)}{1-|w|}
\end{equation*}
for all $z \in \mathbb{D}$. This directly delivers~\eqref{eq:boundary_asymp_precise}.

Finally, since this bound holds for every $z\in\mathbb{D}$ and depends only on $w$, it holds in particular as $z\to\tau$ for any $\tau\in\partial\mathbb{D}$ approached in any manner, which yields the uniform asymptotic relation~\eqref{eq:boundary_asymp}.

We now show that the uniform bound in~\eqref{eq:boundary_asymp_precise} is exactly sharp. It is attained as a limiting value, and the exact asymptotic behavior of the error can be computed explicitly.

We first note that for $w=0$, the formula~\eqref{eq:hyp_dist} gives
\begin{equation*}
\rho_{\mathbb{D}}(z,0) =\log \frac{1+|z|}{1-|z|}= \log\frac{1}{1-|z|^2} + 2\log(1+|z|),
\end{equation*}
so the error equals $2\log(1+|z|) \to \log 4$ as $|z|\to 1^-$.

More generally, we prove that for any $w\in\mathbb{D}$ and $\tau\in\partial\mathbb{D}$, the limit of the error as $z\to\tau$ is given by
\begin{equation}
\label{eq:exact_limit}
\lim_{z\to\tau}
  \left(\rho_{\mathbb{D}}(z,w) - \log\frac{1}{1-|z|^2}\right)
= \log\frac{4|1-\bar{w}\tau|^2}{1-|w|^2}.
\end{equation}
Since $|1-\bar w\tau|^2$ attains its maximum value $(1+|w|)^2$ over $\tau\in\partial\mathbb{D}$ precisely at the point $\tau$, where $\bar w\tau=-|w|$, taking the supremum of~\eqref{eq:exact_limit} over $\tau\in\partial\mathbb{D}$ gives
\begin{equation*}
\sup_{\tau\in\partial\mathbb{D}}\log\frac{4|1-\bar w\tau|^2}{1-|w|^2} = \log\frac{4(1+|w|)^2}{1-|w|^2} = \log\frac{4(1+|w|)}{1-|w|},
\end{equation*}
which coincides exactly with the constant in~\eqref{eq:boundary_asymp_precise}. Hence, the bound in Lemma~\ref{lem:boundary_asymp} is not merely an $O(1)$ estimate up to an unspecified constant, but is attained in the limit along the boundary direction opposite to $w/|w|$.

\noindent
It remains to prove \eqref{eq:exact_limit}. From the definition of the hyperbolic distance, we have
\begin{equation*}
\rho_{\mathbb{D}}(z,w) = \log\frac{1+|\phi_w(z)|}{1-|\phi_w(z)|} = \log\frac{(1+|\phi_w(z)|)^2}{1-|\phi_w(z)|^2}.
\end{equation*}
Expanding the logarithm yields
\begin{equation*}
\rho_{\mathbb{D}}(z,w) = \log\frac{1}{1-|\phi_w(z)|^2} + 2\log(1+|\phi_w(z)|).
\end{equation*}
Substituting the identity~\eqref{identity-1-phiwz} into this equation gives
\begin{equation*}
\rho_{\mathbb{D}}(z,w) = \log\frac{|1-\bar{w}z|^2}{(1-|z|^2)(1-|w|^2)} + 2\log(1+|\phi_w(z)|).
\end{equation*}
Subtracting $\log\frac{1}{1-|z|^2}$ from both sides, we isolate the exact error expression
\begin{align*}
\rho_{\mathbb{D}}(z,w) - \log\frac{1}{1-|z|^2}
&= \log\frac{|1-\bar{w}z|^2}{1-|w|^2} + 2\log(1+|\phi_w(z)|).
\end{align*}
As $z\to\tau$, we have $|\phi_w(z)|\to 1$. This holds regardless of the approach mode, since $\phi_w$ extends continuously to $\overline{\mathbb{D}}$ (its unique pole $1/\bar w$ lies outside $\overline{\mathbb{D}}$) and maps $\partial\mathbb{D}$ onto $\partial\mathbb{D}$. Thus, $2\log(1+|\phi_w(z)|)\to 2\log 2 = \log 4$. Furthermore, by continuity, $|1-\bar{w}z|^2 \to |1-\bar{w}\tau|^2$.

Taking the limit yields the following
\begin{equation*}
\rho_{\mathbb{D}}(z,w) - \log\frac{1}{1-|z|^2}
\;\longrightarrow\;
\log\frac{|1-\bar{w}\tau|^2}{1-|w|^2} + \log 4
= \log\frac{4|1-\bar{w}\tau|^2}{1-|w|^2},
\end{equation*}
which completes the proof of~\eqref{eq:exact_limit}. The proof is now complete.
\end{proof}

Figure~\ref{fig:boundary_asymptotic} illustrates this phenomenon in the case $w=0$. Both
$\rho_{\mathbb D}(z,0)$ and $\log\frac{1}{1-|z|^2}$ diverge as $|z|\to1^-$, yet their difference
remains bounded and increases monotonically to the sharp constant $\log 4$, confirming that the
bound in \eqref{eq:boundary_asymp_precise} is approached but never attained.
%-----------------------------------------------------------------------------------------------------------------------------------------
\begin{figure}
    \centering

\begin{tikzpicture}
\begin{axis}[
    width=12cm, height=8cm,
    xlabel={$|z|$},
    ylabel={value},
    xmin=0, xmax=0.999,
    ymin=0, ymax=8,
    legend pos=north west,
    legend style={font=\small},
    domain=0:0.999,
    samples=200,
    smooth,
    axis lines=left,
    grid=both,
    grid style={line width=.1pt, draw=gray!25},
]

% rho_D(z,0) = log((1+r)/(1-r))
\addplot[blue, thick] {ln((1+x)/(1-x))};
\addlegendentry{$\rho_{\mathbb D}(z,0)=\log\frac{1+|z|}{1-|z|}$}

% log(1/(1-r^2))
\addplot[red, thick, dashed] {ln(1/(1-x^2))};
\addlegendentry{$\log\frac{1}{1-|z|^2}$}

% difference = 2 log(1+r)
\addplot[teal, thick] {2*ln(1+x)};
\addlegendentry{difference $=2\log(1+|z|)$}

% sharp asymptotic bound log 4
\addplot[black, dotted, thick, domain=0:0.999] {ln(4)};
\addlegendentry{sharp bound $\log 4$}

\end{axis}
\end{tikzpicture}
    \caption{Comparison of $\rho_{\mathbb D}(z,0)=\log\frac{1+|z|}{1-|z|}$ (solid blue) and its
leading-order approximation $\log\frac{1}{1-|z|^2}$ (dashed red) as $|z|\to 1^-$. Both quantities
diverge at the boundary, but their difference (solid teal) is given exactly, for $w=0$, by
$2\log(1+|z|)$, and increases monotonically to the sharp bound $\log 4$ (dotted) predicted by
Lemma~\ref{lem:boundary_asymp} with $w=0$; the bound is approached as $|z|\to 1^-$ but never
attained for $z\in\mathbb D$.}
\label{fig:boundary_asymptotic}
\label{fig:placeholder}
\end{figure}
%-----------------------------------------------------------------------------------------------------------------------------------------
\subsection{Denjoy--Wolff theory and angular derivatives}
\label{subsec:dw}

Let $g:\mathbb{D}\to\mathbb{D}$ be holomorphic.
For $z\in\mathbb{D}$, the orbit of $z$ under $g$ or the iterations of $g$
is the sequence $z, g(z), g^{\circ 2}(z),\ldots$,
where $g^{\circ 0}(z)=z$ and
$g^{\circ(n+1)}(z)=g(g^{\circ n}(z))$ for $n\in\mathbb{N}:=\{1,2,\ldots\}$.
If $g$ has no fixed point in $\mathbb{D}$, the Denjoy--Wolff
theorem~\cite[Theorem~1.8.4]{bracci2020} asserts that there
exists a unique point $\tau\in\partial\mathbb{D}$,
called Denjoy--Wolff point of $g$ such that
$g^{\circ n}(z)\to\tau$ for every $z\in\mathbb{D}$.

We call $g$ \textit{hyperbolic} if its Denjoy--Wolff point
lies in $\partial\mathbb{D}$ and the non-tangential (or angular) limit
\begin{equation}
\label{eq:multiplier_def}
\alpha:=\alpha_\tau(g)
=\angle\lim_{z\to\tau}\inf\frac{1-|g(z)|}{1-|z|}
\end{equation}
exists and satisfies $\alpha\in(0,1)$.
The quantity $\alpha$ is called the \textit{multiplier} of
$g$ or the boundary dilatation coefficient
at $\tau$.
By the Julia--Wolff--Carath\'eodory theorem
\cite[Theorem~1.7.3]{bracci2020}, the existence of the
limit~\eqref{eq:multiplier_def} with $\alpha<\infty$
implies that the angular derivative $g'(\tau)$ exists, is
real and positive, and satisfies $g'(\tau)=\alpha$.

\subsection{The extremal and non--extremal rate conditions}
\label{subsec:extremal_def}

We first motivate the definition of an extremal rate. Recall that a
Stolz region at $\tau\in\partial\mathbb{D}$ with parameter $M>1$ is
defined by
\begin{equation*}
S(\tau,M)
=
\{z\in\mathbb{D}: |z-\tau|<M(1-|z|)\}.
\end{equation*}
The condition $M>1$ is essential. Indeed, by the reverse triangle
inequality,
\begin{equation*}
|z-\tau|\geq \bigl||\tau|-|z|\bigr|=1-|z|,
\end{equation*}
so $S(\tau,M)=\varnothing$ when $M\leq1$.

A sequence $\{z_n\}\subset\mathbb{D}$ is said to converge
\emph{non--tangentially} to $\tau\in\partial\mathbb{D}$ if
$z_n\to\tau$ and, for some $M>1$, the sequence is eventually contained
in $S(\tau,M)$; that is, there exists $N\in\mathbb{N}$ such that
\begin{equation*}
|z_n-\tau|<M(1-|z_n|),\quad n\geq N.
\end{equation*}
Since finitely many terms are irrelevant for asymptotic statements,
one may, after discarding finitely many terms, assume that this
inequality holds for all $n$.

Assume that $g:\mathbb{D}\to\mathbb{D}$ is a hyperbolic self-map with
Denjoy--Wolff point $\tau\in\partial\mathbb{D}$, and let
$\alpha\in(0,1)$ denote its angular derivative at $\tau$. By the
Julia--Wolff--Carath\'eodory theorem,
\begin{equation*}
\label{eq:jwc_radial_ratio}
\frac{1-|g(z)|}{1-|z|}
\longrightarrow \alpha,
\quad
(z\to\tau\ \text{non--tangentially}).
\end{equation*}
Moreover, the Julia--Wolff--Carath\'eodory theorem implies that
non--tangential convergence is preserved under $g$, that is, if
$z\to\tau$ non--tangentially, then
\begin{equation*}
g(z)\to\tau
\quad\text{(non--tangentially)}.
\end{equation*}
Consequently,
\begin{equation*}
\frac{1-|g(z)|^2}{\alpha(1-|z|^2)}
=
\frac{1-|g(z)|}{\alpha(1-|z|)}
\frac{1+|g(z)|}{1+|z|}
\longrightarrow1.
\end{equation*}
Thus,
\begin{equation*}
\label{eq:boundary_ratio_squared}
1-|g(z)|^2
=
\alpha(1-|z|^2)(1+o(1))
\end{equation*}
as $z\to\tau$ non--tangentially. In particular,
\begin{equation*}
\label{eq:log_boundary_ratio}
\log\frac{1-|z|^2}{1-|g(z)|^2}
\longrightarrow
\log\frac{1}{\alpha}.
\end{equation*}

To compare the corresponding hyperbolic distances, we must use the
more precise form of Lemma~\ref{lem:boundary_asymp}, rather than merely
its $O(1)$ estimate. For fixed $w\in\mathbb{D}$, set
\begin{equation*}
E(y,w)
:=
\rho_{\mathbb{D}}(y,w)
-
\log\frac{1}{1-|y|^2}.
\end{equation*}
By the explicit formula for the hyperbolic distance used in the proof
of Lemma~\ref{lem:boundary_asymp}, the function
$E(\,\cdot\,,w)$ extends continuously to $\overline{\mathbb{D}}$.
In particular, if $y\to\tau$, then
\begin{equation*}
E(y,w)\longrightarrow E(\tau,w).
\end{equation*}
Since both $z$ and $g(z)$ converge to $\tau$ non--tangentially, we have
\begin{equation*}
E(g(z),w)-E(z,w)\longrightarrow0.
\end{equation*}
Therefore,
\begin{align*}
\rho_{\mathbb{D}}(g(z),w)-\rho_{\mathbb{D}}(z,w)
&=
\log\frac{1-|z|^2}{1-|g(z)|^2}
+
E(g(z),w)-E(z,w) \notag\\
&=
\log\frac{1}{\alpha}+o(1)
\label{eq:single_step_rate}
\end{align*}
as $z\to\tau$ non--tangentially. Equivalently,
\begin{equation}
\label{eq:single_step_rate_equiv}
\rho_{\mathbb{D}}(g(z),w)
=
\rho_{\mathbb{D}}(z,w)
+
\log\frac{1}{\alpha}
+
\varepsilon(z),
\end{equation}
where
\begin{equation*}
\varepsilon(z)\longrightarrow0,
\quad
(z\to\tau\ \text{non--tangentially}).
\end{equation*}

Now let
\begin{equation*}
z_n=g^{\circ n}(z).
\end{equation*}
Since $g$ is hyperbolic, $z_n\to\tau$ non--tangentially. Hence, for
each fixed initial point $z\in\mathbb{D}$, there exists $N=N(z)$ such
that the asymptotic relation \eqref{eq:single_step_rate_equiv} applies
to $z_k$ for all $k\geq N$. Summing the relation from $N$ to $n-1$ gives
\begin{equation}
\label{eq:telescoping_rate}
\rho_{\mathbb{D}}(z_n,w)
=
\rho_{\mathbb{D}}(z_N,w)
+
(n-N)\log\frac{1}{\alpha}
+
\sum_{k=N}^{n-1}\varepsilon(z_k).
\end{equation}
Since $\varepsilon(z_k)\to0$ as $k\to\infty$, Ces\`aro's theorem implies
\begin{equation*}
\frac{1}{n}
\sum_{k=N}^{n-1}\varepsilon(z_k)
\longrightarrow 0.
\end{equation*}
Consequently,
\begin{equation}
\label{eq:asymptotic_hyperbolic_rate}
\lim_{n\to\infty}
\frac{\rho_{\mathbb{D}}(z_n,w)}{n}
=
\log\frac{1}{\alpha}.
\end{equation}
Thus, the multiplier $\alpha$ determines the asymptotic linear
hyperbolic rate of the orbit. The remaining issue is to determine
whether the accumulated error term in \eqref{eq:telescoping_rate}
remains bounded or exhibits a systematic one-sided divergence. This
requires an additional estimate, which we establish next using
Julia's lemma. Indeed, we find a universal lower bound for $\rho_{\mathbb{D}}(g^{\circ n}(z),w)$.

%-------------------------------------------------------------------------------------------------------------------------------------
\begin{proposition}\label{prop:julia-lower-bound}
Let $g:\mathbb{D}\to\mathbb{D}$ be hyperbolic with Denjoy--Wolff point
$\tau\in\partial\mathbb{D}$ and multiplier $\alpha\in(0,1)$. For every
$z,w\in\mathbb{D}$,
\begin{equation}
\label{eq:julia-lower-bound}
\rho_{\mathbb{D}}(g^{\circ n}(z),w)
\ge
n\log\frac{1}{\alpha}
-
C(z,w;\tau),
\quad n\in\mathbb{N}\cup\{0\},
\end{equation}
where
\begin{equation*}
C(z,w;\tau)
=
\log\left(
16R_\tau(z)\frac{1+|w|}{1-|w|}
\right),
\quad
R_\tau(y):=
\frac{|\tau-y|^2}{1-|y|^2}.
\end{equation*}
\end{proposition}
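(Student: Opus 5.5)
The plan is to combine Julia's lemma at the Denjoy--Wolff point with the uniform boundary estimate of Lemma~\ref{lem:boundary_asymp}. Julia's lemma (in the form valid at the Denjoy--Wolff point with multiplier $\alpha$) gives, for every $y\in\mathbb{D}$,
\begin{equation*}
R_\tau(g(y))=\frac{|\tau-g(y)|^2}{1-|g(y)|^2}\le \alpha\,\frac{|\tau-y|^2}{1-|y|^2}=\alpha R_\tau(y).
\end{equation*}
Iterating this $n$ times yields $R_\tau(g^{\circ n}(z))\le \alpha^n R_\tau(z)$ for all $n\ge0$, which is the only dynamical input needed.

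Next I convert this horocyclic bound into a bound on $1-|z_n|^2$, where $z_n=g^{\circ n}(z)$. Since $|\tau-z_n|\ge 1-|z_n|$, we get
\begin{equation*}
R_\tau(z_n)\ge \frac{(1-|z_n|)^2}{(1-|z_n|)(1+|z_n|)}=\frac{1-|z_n|}{1+|z_n|}\ge \frac{1-|z_n|^2}{4}.
\end{equation*}
Hence $1-|z_n|^2\le 4R_\tau(z_n)\le 4\alpha^nR_\tau(z)$, and taking logarithms,
\begin{equation*}
\log\frac{1}{1-|z_n|^2}\ge n\log\frac1\alpha-\log\bigl(4R_\tau(z)\bigr).
\end{equation*}

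Finally, I apply the lower half of the two-sided estimate in the proof of Lemma~\ref{lem:boundary_asymp}, namely $\rho_{\mathbb{D}}(z_n,w)\ge \log\frac{1}{1-|z_n|^2}-\log\frac{1+|w|}{1-|w|}$; alternatively, the symmetric form \eqref{eq:boundary_asymp_precise} costs an extra factor $4$. Using the symmetric form gives
\begin{equation*}
\rho_{\mathbb{D}}(z_n,w)\ge n\log\frac1\alpha-\log\Bigl(4R_\tau(z)\cdot 4\frac{1+|w|}{1-|w|}\Bigr),
\end{equation*}
which is exactly \eqref{eq:julia-lower-bound} with the constant $\log\bigl(16R_\tau(z)\frac{1+|w|}{1-|w|}\bigr)$. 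The case $n=0$ is included automatically.

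The main obstacle is justifying Julia's lemma with the sharp constant $\alpha$ equal to the multiplier. Here the definition \eqref{eq:multiplier_def} as a $\liminf$ matches the standard hypothesis of Julia's lemma, namely $\liminf_{z\to\tau}\frac{1-|g(z)|}{1-|z|}=\alpha$ with $g$ having boundary value $\tau$ at $\tau$. This is part of the Julia--Wolff--Carath\'eodory theory cited as \cite[Theorem~1.7.3]{bracci2020}, so I would invoke it directly rather than reprove it. The remaining steps are elementary inequalities.
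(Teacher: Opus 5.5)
Your proposal is correct and follows essentially the same route as the paper: you iterate Julia's lemma at $\tau$ to get $R_\tau(g^{\circ n}(z))\le\alpha^nR_\tau(z)$, use $|\tau-z_n|\ge 1-|z_n|$ to obtain $1-|z_n|^2\le 4\alpha^nR_\tau(z)$, and finish with the global bound of Lemma~\ref{lem:boundary_asymp}, whose factor $4$ produces the constant $16$. Two of your side remarks also hold: $n=0$ needs no separate treatment, although the paper handles it separately, and the one-sided lower estimate would even give the smaller constant $\log\bigl(4R_\tau(z)\frac{1+|w|}{1-|w|}\bigr)$.
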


\begin{proof}
Since $\tau$ is the Denjoy--Wolff point of the hyperbolic map $g$,
its boundary dilation coefficient at $\tau$ equals the multiplier
$\alpha$. Julia's lemma therefore gives
\begin{equation*}
R_\tau(g(y))\leq \alpha R_\tau(y),
\quad y\in\mathbb{D}.
\end{equation*}
Iterating this inequality along the orbit
$z_n=g^{\circ n}(z)$ yields
\begin{equation*}
R_\tau(z_n)\leq \alpha^n R_\tau(z),
\quad n\geq0.
\end{equation*}
Since $|\tau-z_n|\geq1-|z_n|$, we obtain
\begin{equation*}
(1-|z_n|)^2
\leq |\tau-z_n|^2
=R_\tau(z_n)(1-|z_n|^2)
\leq \alpha^nR_\tau(z)(1-|z_n|)(1+|z_n|).
\end{equation*}
Dividing by $1-|z_n|>0$ and using $1+|z_n|<2$ gives
\begin{equation*}
1-|z_n|<2\alpha^nR_\tau(z),
\end{equation*}
and hence,
\begin{equation*}
1-|z_n|^2
=
(1-|z_n|)(1+|z_n|)
<
4\alpha^nR_\tau(z).
\end{equation*}
Therefore,
\begin{equation*}
\log\frac{1}{1-|z_n|^2}
>
n\log\frac{1}{\alpha}
-
\log\bigl(4R_\tau(z)\bigr).
\end{equation*}
By the global estimate in
Lemma~\ref{lem:boundary_asymp}, valid for every $y\in\mathbb{D}$,
\begin{equation*}
\rho_{\mathbb{D}}(y,w)
\geq
\log\frac{1}{1-|y|^2}
-
\log\frac{4(1+|w|)}{1-|w|}.
\end{equation*}
Applying this with $y=z_n$ gives
\begin{align*}
\rho_{\mathbb{D}}(z_n,w)
&>
n\log\frac{1}{\alpha}
-\log\bigl(4R_\tau(z)\bigr)
-\log\frac{4(1+|w|)}{1-|w|}\\
&=
n\log\frac{1}{\alpha}
-
\log\left(
16R_\tau(z)\frac{1+|w|}{1-|w|}
\right).
\end{align*}
Thus, \eqref{eq:julia-lower-bound} holds for $n\geq1$.

For $n=0$, Lemma~\ref{lem:boundary_asymp} gives
\begin{equation*}
\rho_{\mathbb{D}}(z,w)
\geq
\log\frac{1}{1-|z|^2}
-
\log\frac{4(1+|w|)}{1-|w|}.
\end{equation*}
Moreover,
\begin{equation*}
R_\tau(z)
=
\frac{|\tau-z|^2}{1-|z|^2}
\geq
\frac{(1-|z|)^2}{1-|z|^2}
=
\frac{1-|z|}{1+|z|},
\end{equation*}
and therefore
\begin{equation*}
\frac{4R_\tau(z)}{1-|z|^2}
=
\frac{4|\tau-z|^2}{(1-|z|^2)^2}
\geq
\frac{4}{(1+|z|)^2}
>1.
\end{equation*}
Hence, the same lower bound holds for $n=0$.
\end{proof}

\begin{remark}
The above proof gives a completely explicit lower-bound constant depending
only on the initial point $z$, the Denjoy--Wolff point $\tau$, and the
base point $w$. In particular, no non--tangential convergence of the
orbit is required. The estimate follows globally from Julia's lemma,
the elementary inequality
\begin{equation*}
|\tau-y|\geq1-|y|,
\end{equation*}
and the global form of Lemma~\ref{lem:boundary_asymp}.
Consequently, the lower bound holds for every $n\geq0$.
\end{remark}

The universal lower bound in Proposition~\ref{prop:julia-lower-bound}
shows that, once the orbit has entered the region where the
single--step asymptotic \eqref{eq:single_step_rate_equiv} applies, the
accumulated error in the telescoping formula is bounded from below.
It is not, however, forced to be bounded above: a sequence of terms
tending to zero may have partial sums that diverge to $+\infty$.
Thus, the relevant distinction is whether the deviation from the
linear term $n\log(1/\alpha)$ remains bounded or becomes unbounded
above. In the former case, we obtain the extremal rate defined in
Definition~\ref{def:extremal_rate} below; in the latter case, the rate is
non--extremal. Notice that non--extremality does not mean a larger
asymptotic linear slope: by \eqref{eq:asymptotic_hyperbolic_rate}, the
slope is still $\log(1/\alpha)$. Rather, it means that the deviation
from this linear model is unbounded. This distinction is the central object of study below.

Next, we show that the slope $\log(1/\alpha)$ is optimal.
\begin{proposition}
\label{prop:sharpness}
The coefficient $\log(1/\alpha)$ in
Proposition~\ref{prop:julia-lower-bound} is optimal: it cannot be
replaced by any larger constant. More precisely, for every
$\alpha\in(0,1)$ there exist a hyperbolic automorphism
$g:\mathbb{D}\to\mathbb{D}$ with Denjoy--Wolff point $\tau=1$ and
multiplier $\alpha$, and points $z,w\in\mathbb{D}$, such that
\begin{equation}
\label{eq:sharp_exact_rate}
\rho_{\mathbb{D}}(g^{\circ n}(z),w)
=
n\log\frac{1}{\alpha}
\quad\text{for every }n\geq0.
\end{equation}
Thus, the lower bound in Proposition~\ref{prop:julia-lower-bound}
is sharp at the level of the linear coefficient, and in this example
the additive error vanishes identically.

In particular, if $\beta<\alpha$, then there is no constant $C$,
independent of the map and the orbit, such that
\begin{equation*}
\rho_{\mathbb{D}}(g^{\circ n}(z),w)
\geq
n\log\frac{1}{\beta}-C
\end{equation*}
holds for all $n\geq0$ throughout the class of hyperbolic
self--maps with multiplier $\alpha$.
\end{proposition}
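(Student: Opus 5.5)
The plan is to write down an explicit hyperbolic automorphism of $\mathbb{D}$ that translates along the real diameter, and to take $z=w=0$, the point on its axis. Given $\alpha\in(0,1)$, set
\begin{equation*}
r=\frac{1-\alpha}{1+\alpha}\in(0,1),\qquad g(y)=\frac{y+r}{1+ry}.
\end{equation*}
This is a M\"obius automorphism of $\mathbb{D}$ preserving $(-1,1)$, with fixed points $\pm1$.

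\textbf{Step 1: $g$ is hyperbolic with $\tau=1$ and multiplier $\alpha$.} A direct computation gives $g'(y)=(1-r^2)/(1+ry)^2$, so
\begin{equation*}
g'(1)=\frac{1-r^2}{(1+r)^2}=\frac{1-r}{1+r}=\alpha\in(0,1).
\end{equation*}
Likewise $g'(-1)=(1+r)/(1-r)=1/\alpha>1$. Thus $g$ has no interior fixed point, $1$ is the attracting boundary fixed point, and by the Denjoy--Wolff theorem and the Julia--Wolff--Carath\'eodory theorem recalled in Section~\ref{subsec:dw}, $\tau=1$ with multiplier $\alpha$. Concretely, the real orbit $g^{\circ n}(0)$ is increasing and tends to $1$.

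\textbf{Step 2: the exact identity \eqref{eq:sharp_exact_rate}.} Since $g$ is a hyperbolic isometry mapping the geodesic $(-1,1)$ to itself, it acts there as a translation by
\begin{equation*}
\rho_{\mathbb{D}}(0,g(0))=\rho_{\mathbb{D}}(0,r)=\log\frac{1+r}{1-r}=\log\frac1\alpha .
\end{equation*}
The points $0=z_0<z_1<\dots<z_n$, with $z_k=g^{\circ k}(0)$, lie in order on this geodesic, so hyperbolic distances along it add. Each consecutive gap satisfies $\rho_{\mathbb{D}}(z_k,z_{k+1})=\rho_{\mathbb{D}}(0,r)$ by invariance of $\rho_{\mathbb{D}}$ under $g^{\circ k}$. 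Hence
\begin{equation*}
\rho_{\mathbb{D}}(g^{\circ n}(0),0)=n\log\frac1\alpha,\qquad n\ge0 .
\end{equation*}
Alternatively, one can verify $g^{\circ n}(0)=r_n$ with $\frac{1+r_n}{1-r_n}=\alpha^{-n}$ by induction, using the addition law $\frac{1+g(y)}{1-g(y)}=\frac{1+r}{1-r}\cdot\frac{1+y}{1-y}$ for real $y$, and then apply \eqref{eq:hyp_dist} with $w=0$. This proves \eqref{eq:sharp_exact_rate} with $z=w=0$.

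\textbf{Step 3: the final assertion.} Suppose $\beta<\alpha$, so that $\log(1/\beta)>\log(1/\alpha)$. A universal bound $\rho_{\mathbb{D}}(g^{\circ n}(z),w)\ge n\log(1/\beta)-C$, applied to the example above, would give
\begin{equation*}
n\log\frac1\alpha\ge n\log\frac1\beta-C\quad\text{for all } n,
\end{equation*}
that is, $n\log(\alpha/\beta)\le C$ for all $n$. This is impossible, since $\log(\alpha/\beta)>0$.

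There is no serious obstacle in this argument. The only point requiring care is Step 1: confirming that the Denjoy--Wolff point is $1$ rather than $-1$, and that the multiplier at $1$ equals $\alpha$ and not $1/\alpha$. This is settled by the computation $g'(1)=\alpha<1$ together with the monotonicity of the real orbit.
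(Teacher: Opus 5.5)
Your proposal is correct and is essentially the paper's own proof. Your map $g(y)=(y+r)/(1+ry)$ with $r=(1-\alpha)/(1+\alpha)$ is the paper's automorphism after dividing numerator and denominator by $1+\alpha$. Your addition law $\frac{1+g(y)}{1-g(y)}=\frac{1}{\alpha}\cdot\frac{1+y}{1-y}$ is exactly the paper's observation that $\psi(x)=\log\frac{1+x}{1-x}$ conjugates $g$ on $(-1,1)$ to the translation $t\mapsto t+\log\frac{1}{\alpha}$. The closing contradiction $n\log(\alpha/\beta)\le C$ also matches the paper's.
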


\begin{proof}
Fix $\alpha\in(0,1)$ and let $\tau=1$. Consider hyperbolic
automorphism $g:\mathbb{D}\to \mathbb{D}$
\begin{equation}
\label{eq:sharpness_automorphism}
g(z)
=
\frac{(1+\alpha)z+(1-\alpha)}
     {(1-\alpha)z+(1+\alpha)}.
\end{equation}
It fixes $\pm1$, and
\begin{equation*}
g'(z)
=
\frac{4\alpha}
{\bigl((1-\alpha)z+(1+\alpha)\bigr)^2}.
\end{equation*}
Consequently,
\begin{equation*}
g'(1)=\alpha,
\quad
g'(-1)=\frac{1}{\alpha}.
\end{equation*}
Thus, $g$ is hyperbolic, its Denjoy--Wolff point is $\tau=1$, and its
multiplier at $\tau$ is $\alpha$.

We show that the map $g$ preserves the real diameter $(-1,1)$. Define
\begin{equation*}
\psi:(-1,1)\to\mathbb{R},
\quad
\psi(x)=\log\frac{1+x}{1-x}.
\end{equation*}
Since
\begin{equation*}
\psi'(x)=\frac{2}{1-x^2},
\end{equation*}
which is precisely the density of the hyperbolic metric
$\lambda_{\mathbb{D}}$ restricted to the real axis, and since
$(-1,1)$ is a hyperbolic geodesic of $\mathbb{D}$, we have
\begin{equation*}
\rho_{\mathbb{D}}(x,y)
=
|\psi(x)-\psi(y)|,
\quad (x,y\in(-1,1)).
\end{equation*}
Thus, $\psi$ is an isometry from
$((-1,1),\rho_{\mathbb{D}})$ onto $(\mathbb{R},|\cdot|)$.

Since $g$ is an automorphism of $\mathbb{D}$, it is an isometry of
$\rho_{\mathbb{D}}$. Moreover, it maps $(-1,1)$ onto itself and fixes
both endpoints. Hence,
\begin{equation*}
T:=\psi\circ g\circ\psi^{-1}
\end{equation*}
is an orientation--preserving isometry of $\mathbb{R}$ and therefore
has the form
\begin{equation*}
T(t)=t+c
\end{equation*}
for some $c\in\mathbb{R}$. Evaluating at $t=0$ gives
\begin{equation*}
c
=T(0)
=\psi(g(0))
=\psi\left(\frac{1-\alpha}{1+\alpha}\right)
=
\log
\frac{
1+\frac{1-\alpha}{1+\alpha}
}{
1-\frac{1-\alpha}{1+\alpha}
}
=
\log\frac{1}{\alpha}.
\end{equation*}
Therefore
\begin{equation*}
T(t)=t+\log\frac{1}{\alpha},
\end{equation*}
and hence,
\begin{equation*}
\psi(g^{\circ n}(0))
=
T^{\circ n}(0)
=
n\log\frac{1}{\alpha},
\quad(n\geq0).
\end{equation*}
Taking $z=w=0$ and using $\psi(0)=0$, we obtain
\begin{equation*}
\rho_{\mathbb{D}}(g^{\circ n}(0),0)
=
\left|\psi(g^{\circ n}(0))\right|
=
n\log\frac{1}{\alpha},
\end{equation*}
which proves \eqref{eq:sharp_exact_rate}.

Finally, let $\beta<\alpha$. Then
\begin{equation*}
\log\frac{1}{\beta}
>
\log\frac{1}{\alpha}.
\end{equation*}
If there is a constant $C$ independent of the map and the orbit
such that
\begin{equation*}
\rho_{\mathbb{D}}(g^{\circ n}(z),w)
\geq
n\log\frac{1}{\beta}-C
\end{equation*}
for all hyperbolic self--maps with multiplier $\alpha$, applying this
to the automorphism \eqref{eq:sharpness_automorphism} with $z=w=0$ gives
\begin{equation*}
n\log\frac{1}{\alpha}
\geq
n\log\frac{1}{\beta}-C,
\quad(n\geq0).
\end{equation*}
Equivalently,
\begin{equation*}
n\left(
\log\frac{1}{\beta}
-
\log\frac{1}{\alpha}
\right)
\leq C,
\end{equation*}
which is impossible as $n\to\infty$. Hence, a larger coefficient cannot
hold uniformly in the class, proving the sharpness of
$\log(1/\alpha)$.
\end{proof}

\begin{remark}
\label{rem:constant-not-sharp}
We note that the constant in the universal lower bound is not optimal.
Sharpness of the rate does not imply optimality of the particular
constant
\begin{equation*}
C(z,w;\tau)
=
\log\left(
16R_\tau(z)\frac{1+|w|}{1-|w|}
\right)
\end{equation*}
obtained in Proposition~\ref{prop:julia-lower-bound}. Indeed, for the
automorphism in Proposition~\ref{prop:sharpness}, with
$\tau=1$ and $z=w=0$, we have
\begin{equation*}
R_\tau(0)=\frac{|1-0|^2}{1-|0|^2}=1,
\end{equation*}
and hence,
\begin{equation*}
C(0,0;\tau)
=
\log16.
\end{equation*}
On the other hand, Proposition~\ref{prop:sharpness} gives
\begin{equation*}
\rho_{\mathbb{D}}(g^{\circ n}(0),0)
-
n\log\frac{1}{\alpha}
=
0
\quad\text{for every }n\geq0.
\end{equation*}
Thus, the actual error is identically zero, while the universal
constant supplied by Proposition~\ref{prop:julia-lower-bound} is
$\log16$.

The lack of sharpness comes from the estimates used in the proof,
in particular the replacement of $1+|z_n|$ by the crude upper bound
$2$. The estimate $1+|z_n|\to2$ alone does not, however, yield a
uniformly sharper constant of the same form. Determining an optimal
pointwise constant in the universal lower bound is a separate
question and is not pursued here.
\end{remark}

We are now ready to define the extremal hyperbolic rate.

\begin{definition}
\label{def:extremal_rate}
Let $g:\mathbb{D}\to\mathbb{D}$ be hyperbolic with Denjoy--Wolff point
$\tau\in\partial\mathbb{D}$ and boundary multiplier
$\alpha\in(0,1)$. We say that $g$ has an
\emph{extremal hyperbolic rate} if, for some (equivalently, every)
$w\in\mathbb{D}$ and some (equivalently, every) $z\in\mathbb{D}$,
\begin{equation}
\label{eq:extremal_rate}
\rho_{\mathbb{D}}(g^{\circ n}(z),w)
=
n\log\frac{1}{\alpha}+O(1),
\quad (n\to\infty).
\end{equation}
Equivalently, for some (equivalently, every) $z,w\in\mathbb{D}$,
there exist constants $C=C(z,w,g)>0$ and
$n_0=n_0(z,w,g)\in\mathbb{N}$ such that
\begin{equation*}
\left|
\rho_{\mathbb{D}}(g^{\circ n}(z),w)
-
n\log\frac{1}{\alpha}
\right|
\le C
\quad\text{for all }n\ge n_0.
\end{equation*}

By Proposition~\ref{prop:julia-lower-bound}, the quantity
\begin{equation*}
\rho_{\mathbb{D}}(g^{\circ n}(z),w)
-
n\log\frac{1}{\alpha}
\end{equation*}
is bounded below (for fixed $z,w$). Hence, $g$ has an extremal
hyperbolic rate if and only if this quantity is also bounded above.
Thus, extremality means that the orbit has only a bounded deviation
from the linear growth $n\log(1/\alpha)$. In particular, it is not a
distinction in the first-order asymptotic slope: all hyperbolic maps
under consideration have the same slope,
\begin{equation*}
\lim_{n\to\infty}
\frac{\rho_{\mathbb{D}}(g^{\circ n}(z),w)}{n}
=
\log\frac{1}{\alpha},
\end{equation*}
whereas in the extremal case the deviation from this linear term
remains bounded.

The property in \eqref{eq:extremal_rate} is independent of the choice
of the base point $w$. Indeed, for $w_1,w_2\in\mathbb{D}$, the triangle
inequality gives
\begin{equation*}
\left|
\rho_{\mathbb{D}}(g^{\circ n}(z),w_1)
-
\rho_{\mathbb{D}}(g^{\circ n}(z),w_2)
\right|
\le
\rho_{\mathbb{D}}(w_1,w_2).
\end{equation*}
Thus, an $O(1)$ bound for one base point implies the same bound for every base point.

The property is also independent of the initial point $z$. By the
Schwarz--Pick contraction property,
\begin{equation*}
\rho_{\mathbb{D}}(g^{\circ n}(z),g^{\circ n}(z'))
\le
\rho_{\mathbb{D}}(z,z')
\quad\text{for all }n\ge0.
\end{equation*}
Consequently,
\begin{equation*}
\left|
\rho_{\mathbb{D}}(g^{\circ n}(z),w)
-
\rho_{\mathbb{D}}(g^{\circ n}(z'),w)
\right|
\le
\rho_{\mathbb{D}}(g^{\circ n}(z),g^{\circ n}(z'))
\le
\rho_{\mathbb{D}}(z,z'),
\end{equation*}
and hence, an $O(1)$ bound for one initial point implies the same
bound for every $z'\in\mathbb{D}$.
\end{definition}

\begin{definition}
\label{def:non-extremal_rate}
Let $g:\mathbb{D}\to\mathbb{D}$ be hyperbolic with Denjoy--Wolff point
$\tau\in\partial\mathbb{D}$ and boundary multiplier
$\alpha\in(0,1)$. We say that $g$ has a
\emph{non--extremal hyperbolic rate} if it does not have an extremal
hyperbolic rate. Equivalently, for some (equivalently, every)
$z,w\in\mathbb{D}$,
\begin{equation*}
\label{eq:non_extremal_rate}
\limsup_{n\to\infty}
\left(
\rho_{\mathbb{D}}(g^{\circ n}(z),w)
-
n\log\frac{1}{\alpha}
\right)
=
+\infty.
\end{equation*}
Indeed, by Proposition~\ref{prop:julia-lower-bound}, for every fixed
$z,w\in\mathbb{D}$ the quantity
\begin{equation*}
\rho_{\mathbb{D}}(g^{\circ n}(z),w)
-
n\log\frac{1}{\alpha}
\end{equation*}
is bounded below uniformly in $n$. Consequently, failure of the
boundedness condition in \eqref{eq:extremal_rate} is equivalent to
unboundedness above, which is precisely \eqref{eq:non_extremal_rate}.
The independence of the choice of $z$ and $w$ follows from the
triangle inequality and the Schwarz--Pick contraction argument used
in Definition~\ref{def:extremal_rate}.

Thus, non--extremality does not correspond to a larger asymptotic
linear rate. Rather, it means that the deviation from the linear
model
\begin{equation*}
n\log\frac{1}{\alpha}
\end{equation*}
is unbounded above: along some subsequence $\{n_j\}$,
\begin{equation*}
\rho_{\mathbb{D}}(g^{\circ n_j}(z),w)
-
n_j\log\frac{1}{\alpha}
\longrightarrow+\infty.
\end{equation*}
In view of Proposition~\ref{prop:julia-lower-bound}, the deviation
cannot diverge to $-\infty$.
\end{definition}

\begin{remark}
\label{rem:cruz_comparison}
Here, we compare the Definition \eqref{def:extremal_rate} with the half-plane formulation.
Our definition of extremal hyperbolic rate is consistent, under the
corresponding change of model and normalization, with the notion of
extremality for self-maps of the upper half-plane considered by
Cruz-Zamorano--Zarvalis~\cite[Definition~3.3]{cruz}. There, for a
self-map $f$ of $\mathbb{H}$ with Denjoy--Wolff point at $\infty$,
extremality is characterized by the existence of $\lambda>1$ and
$L\in\mathbb{H}$ such that
\begin{equation*}
\frac{f^{\circ n}(z)}{\lambda^n}\longrightarrow L.
\end{equation*}
Under Koenigs-coordinate normalization, the parameter $\lambda$
corresponds to $1/\alpha$. The equivalence between this Euclidean normalization and the present hyperbolic formulation is established
in Theorem~\ref{thm:koenigs_criterion} below.

The metric formulation used here has the advantage that it is
intrinsic and does not depend on a particular Euclidean model of the
domain. This makes it suitable for subsequent extensions to more
general domains and, in particular, to settings in which a Euclidean
normalization of the orbit is not naturally available.
\end{remark}

In the following remark, we state the relation of our definition with the notion of extremal rate in the continuous dynamics investigated in \cite{cruz26}.
\begin{remark}
\label{rem:continuous_extremal_rate}
The terminology of extremal rate is closely related to the notion introduced by Cruz-Zamorano and Zarvalis
\cite[Section~3.1]{cruz26} for continuous holomorphic dynamics.
More precisely, let $(\phi_t)_{t\geq0}$ be a hyperbolic semigroup in
$\mathbb{H}$ with a Denjoy--Wolff point at $\infty$ and a spectral value
$\lambda>0$. In this setting, it is known that
\begin{equation*}
\liminf_{t\to\infty}
\frac{|\phi_t(z)|}{e^{\lambda t}}>0,
\quad z\in\mathbb{H},
\end{equation*}
so that $e^{\lambda t}$ gives the sharp exponential scale of the escape of the orbits. The semigroup is said to be of extremal rate
when
\begin{equation*}
\lim_{t\to\infty}
\frac{\phi_t(z)}{e^{\lambda t}}
\in\mathbb{H}
\end{equation*}
for some (and hence, all) $z\in\mathbb{H}$.
Thus, in their Euclidean formulation, extremality means that the
orbit attains the slowest possible exponential escape rate without
an additional unbounded multiplicative factor.

Our definition is the intrinsic hyperbolic-metric counterpart of
this notion in discrete dynamics. Indeed, after conjugating a
hyperbolic self-map of $\mathbb{D}$ to the corresponding model in
$\mathbb{H}$, the boundary multiplier $\alpha\in(0,1)$ is related to
the exponential scaling factor by
\begin{equation*}
\lambda=\log\frac{1}{\alpha}.
\end{equation*}
Accordingly, our linear model
\begin{equation*}
n\log\frac{1}{\alpha}
\end{equation*}
is the discrete-time counterpart of the exponential scale
$e^{\lambda t}$. The condition
\begin{equation*}
\rho_{\mathbb{D}}(g^{\circ n}(z),w)
=
n\log\frac{1}{\alpha}+O(1)
\end{equation*}
therefore expresses, in hyperbolic distance, that the orbit has no
unbounded additive excess over the sharp linear escape scale.

In the hyperbolic semigroup setting, the Euclidean formulation
\begin{equation*}
\frac{\phi_t(z)}{e^{\lambda t}}\longrightarrow L\in\mathbb{H}
\end{equation*}
and the corresponding bounded-deviation formulation in hyperbolic distance are equivalent after passing to the appropriate Koenigs
coordinate; see Theorem~\ref{thm:koenigs_criterion} below. Thus, our definition may be viewed as an intrinsic metric reformulation of the
same extremality principle, while being formulated directly for
iterates and without requiring a Euclidean normalization of the
orbit.
\end{remark}

\subsection{Herglotz--Nevanlinna representation}
\label{subsec:herglotz}

We record the two forms of the representation used in
this paper. We first recall the Herglotz representation in $\mathbb{D}$, see
{\cite[Theorem~2.1.1]{bracci2020}}.

\begin{theorem}
\label{thm:herglotz_disk}
Let $p:\mathbb{D}\to\{s\in\mathbb{C}:\operatorname{Re}s\ge 0\}$
be holomorphic.
Then there exists a unique finite non-negative Borel measure
$\mu$ on $\partial\mathbb{D}$ such that
\begin{equation*}
p(z)
=\int_{\partial\mathbb{D}}\frac{\zeta+z}{\zeta-z}\,d\mu(\zeta)
 +i\operatorname{Im}p(0),
\quad z\in\mathbb{D}.
\end{equation*}
\end{theorem}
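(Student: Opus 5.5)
The plan is to reduce the statement to the Poisson representation of positive harmonic functions, and then recover the imaginary part from the Cauchy--Riemann equations.

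\emph{Existence.} Set $u=\operatorname{Re}p\ge 0$, which is harmonic on $\mathbb{D}$. For $0<r<1$, consider the measures $d\mu_r(\zeta)=u(r\zeta)\,|d\zeta|/(2\pi)$ on $\partial\mathbb{D}$. By the mean value property, each $\mu_r$ is non-negative with total mass $u(0)$. Banach--Alaoglu, with $C(\partial\mathbb{D})^*$ identified with finite Borel measures via Riesz, gives a sequence $r_k\to1$ with $\mu_{r_k}\to\mu$ weak-$*$. Here $\mu\ge0$ and $\mu(\partial\mathbb{D})=u(0)$.

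For fixed $z$, the Poisson formula for the harmonic function $u(r_k\,\cdot)$, which is continuous on $\overline{\mathbb{D}}$, gives
\begin{equation*}
u(r_kz)=\int_{\partial\mathbb{D}}\operatorname{Re}\frac{\zeta+z}{\zeta-z}\,d\mu_{r_k}(\zeta).
\end{equation*}
The kernel is continuous in $\zeta$ for fixed $z\in\mathbb{D}$. So we may pass to the limit and obtain $u(z)=\int\operatorname{Re}\frac{\zeta+z}{\zeta-z}\,d\mu$.

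Next, define
\begin{equation*}
q(z)=\int_{\partial\mathbb{D}}\frac{\zeta+z}{\zeta-z}\,d\mu(\zeta).
\end{equation*}
This is holomorphic: differentiate under the integral, justified by the locally uniform bounds on the kernel for $|z|\le\rho<1$. Also $\operatorname{Re}q=u=\operatorname{Re}p$, so $p-q$ is holomorphic with zero real part. By the open mapping theorem, $p-q$ is an imaginary constant. Evaluating at $0$ and using $q(0)=\mu(\partial\mathbb{D})\in\mathbb{R}$, this constant equals $i\operatorname{Im}p(0)$.

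\emph{Uniqueness.} Suppose $\mu_1,\mu_2$ both represent $p$. Then $\nu=\mu_1-\mu_2$ is a real measure with
\begin{equation*}
\int\frac{\zeta+z}{\zeta-z}\,d\nu=0 \quad\text{on } \mathbb{D}.
\end{equation*}
Expand the kernel as $1+2\sum_{n\ge1}\bar\zeta^nz^n$, which converges uniformly in $\zeta$ for fixed $|z|<1$. Equating Taylor coefficients gives $\nu(\partial\mathbb{D})=0$ and $\int\bar\zeta^n\,d\nu=0$ for all $n\ge1$. Since $\nu$ is real, conjugating gives $\int\zeta^n\,d\nu=0$ as well. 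Thus all Fourier coefficients of $\nu$ vanish. Trigonometric polynomials are dense in $C(\partial\mathbb{D})$, so $\nu=0$ by Riesz uniqueness.

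The only step needing care is the weak-$*$ limit. Its justification is that the Poisson kernel is continuous in $\zeta$ for each fixed interior $z$, together with the uniform mass bound $u(0)$. Everything else is routine.
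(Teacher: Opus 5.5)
The paper gives no proof of this statement. It quotes it as a known result from \cite[Theorem~2.1.1]{bracci2020}, and the disk representation used later, in Theorem~\ref{thm:herglotz_criterion}, is obtained by Cayley-transforming the half-plane Nevanlinna representation rather than from this theorem. So there is no internal argument to compare with. Judged on its own, your proof is the standard Riesz--Herglotz argument, and it is correct and complete.

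\begin{itemize}
\item The weak-$*$ limit of $\mu_r=u(r\zeta)\,|d\zeta|/(2\pi)$ is a positive measure of mass $\operatorname{Re}p(0)$. It reproduces $\operatorname{Re}p$ through the Poisson kernel.
\item The function $q$ is holomorphic with the same real part as $p$, so $p-q$ is an imaginary constant. Since $q(0)=\mu(\partial\mathbb{D})\in\mathbb{R}$, that constant is $i\operatorname{Im}p(0)$.
\item Uniqueness via $\frac{\zeta+z}{\zeta-z}=1+2\sum_{n\ge1}\bar\zeta^n z^n$ and vanishing Fourier coefficients of the real measure $\mu_1-\mu_2$ is fine.
\item The degenerate case $\operatorname{Re}p\equiv 0$, where $\mu=0$, is covered automatically.
\end{itemize}

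One step should be stated more carefully: extracting a \emph{sequence} $r_k\to1$. Banach--Alaoglu by itself only gives a weak-$*$ convergent subnet. You can fix this in either of two ways:
\begin{itemize}
\item Invoke the separability of $C(\partial\mathbb{D})$, which makes norm-bounded subsets of $C(\partial\mathbb{D})^*$ weak-$*$ metrizable (equivalently, use Helly's selection theorem).
\item Run the argument with a subnet $r_\beta\to1$ throughout, since nothing afterwards actually uses sequences.
\end{itemize}
Combined with your uniqueness argument, this also shows that the whole family $\mu_r$ converges weak-$*$ to $\mu$ as $r\to1$. Every subnet has a further subnet converging to a representing measure, and all such limits coincide.
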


The next theorem is the Nevanlinna representation in $\mathbb{H}$, see
{\cite{Nev} or \cite{Cau} in the current form}.
\begin{theorem}
\label{thm:nevanlinna}
A function $f:\mathbb{H}\to\mathbb{C}$ is holomorphic with
$\operatorname{Im}f(z)\ge 0$ for all $z\in\mathbb{H}$
if and only if there exist $a\in\mathbb{R}$, $b\ge 0$,
and a positive Borel measure $\nu$ on $\mathbb{R}$
satisfying $\int_{\mathbb{R}}(1+t^2)^{-1}\,d\nu(t)<\infty$,
such that
\begin{equation}
\label{eq:nevanlinna_rep}
f(z)
=a+bz
+\int_{\mathbb{R}}
  \left(\frac{1}{t-z}-\frac{t}{1+t^2}\right)d\nu(t),
\quad z\in\mathbb{H}.
\end{equation}
The triple $(a,b,\nu)$ is unique.
\end{theorem}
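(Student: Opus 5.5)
The plan is to derive the Nevanlinna representation from the Herglotz representation in Theorem~\ref{thm:herglotz_disk} by a Cayley change of variables, and then prove uniqueness separately by a Stieltjes inversion argument.

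\textbf{Necessity.} Let $f:\mathbb{H}\to\overline{\mathbb{H}}$ be holomorphic and fix the Cayley map $C(z)=\frac{z-i}{z+i}$ from $\mathbb{H}$ onto $\mathbb{D}$. Define $p:\mathbb{D}\to\{\operatorname{Re}s\ge0\}$ by
\begin{equation*}
p(w)=-i\,f\bigl(C^{-1}(w)\bigr),\qquad C^{-1}(w)=i\frac{1+w}{1-w}.
\end{equation*}
Theorem~\ref{thm:herglotz_disk} then gives a finite measure $\mu$ on $\partial\mathbb{D}$ with
\begin{equation*}
p(w)=\int_{\partial\mathbb{D}}\frac{\zeta+w}{\zeta-w}\,d\mu(\zeta)+i\operatorname{Im}p(0).
\end{equation*}
Split $\mu=\mu(\{1\})\delta_1+\mu|_{\partial\mathbb{D}\setminus\{1\}}$. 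The atom at $\zeta=1$ produces $\frac{1+w}{1-w}=-iz$, and after multiplying by $i$ this gives the linear term $bz$ with $b=\mu(\{1\})\ge0$.

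On $\partial\mathbb{D}\setminus\{1\}$, parametrize $\zeta=C(t)$ with $t\in\mathbb{R}$ and compute directly that
\begin{equation*}
i\frac{\zeta+w}{\zeta-w}=\frac{1+tz}{t-z}=(1+t^2)\left(\frac{1}{t-z}-\frac{t}{1+t^2}\right)+t.
\end{equation*}
Set $d\nu(t)=(1+t^2)\,d(C^{-1}_*\mu)(t)$. Then $\int(1+t^2)^{-1}\,d\nu=\mu(\partial\mathbb{D}\setminus\{1\})<\infty$. The leftover term $\int t\,(1+t^2)^{-1}\,d\nu$ is finite because $|t|/(1+t^2)\le 1/2\cdot 2/(1+t^2)\cdot\ldots$, more precisely $|t|\le(1+t^2)/2$, so it is bounded by $\tfrac12\mu(\partial\mathbb{D})$. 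This constant is real, and together with the real constant $i\cdot i\operatorname{Im}p(0)$ it forms $a\in\mathbb{R}$. This yields \eqref{eq:nevanlinna_rep}.

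\textbf{Sufficiency.} Suppose $f$ has the form \eqref{eq:nevanlinna_rep}. The kernel $\frac{1}{t-z}-\frac{t}{1+t^2}$ is bounded by a constant times $(1+t^2)^{-1}$ locally uniformly in $z\in\mathbb{H}$, so the integral converges and defines a holomorphic function by dominated convergence. Moreover
\begin{equation*}
\operatorname{Im}\frac{1}{t-z}=\frac{\operatorname{Im}z}{|t-z|^2}>0,
\end{equation*}
and together with $b\ge0$ this gives $\operatorname{Im}f\ge0$.

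\textbf{Uniqueness.} This is the main step. The coefficient $b$ is recovered as $b=\lim_{y\to\infty}f(iy)/(iy)$, using dominated convergence to show that $\frac{1}{y}\int\bigl(\cdots\bigr)\,d\nu\to0$. Next,
\begin{equation*}
a=\operatorname{Re}f(i),
\end{equation*}
since $\operatorname{Re}\bigl(\frac{1}{t-i}-\frac{t}{1+t^2}\bigr)=0$. Finally, $\nu$ is recovered by Stieltjes inversion: $\frac{1}{\pi}\operatorname{Im}f(x+iy)\,dx-\frac{by}{\pi}\,dx$ converges vaguely to $\nu$ as $y\to0^+$, because the Poisson kernel $\frac{1}{\pi}\frac{y}{(x-t)^2+y^2}$ is an approximate identity. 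The delicate point is passing to the limit against compactly supported test functions when $\nu$ is only weighted-finite; I would handle it by truncating $\nu$ to a large interval and bounding the tail via $(1+t^2)^{-1}$. An alternative route to uniqueness is to pull everything back to $\mathbb{D}$ and invoke the uniqueness part of Theorem~\ref{thm:herglotz_disk}, since the correspondence $(a,b,\nu)\leftrightarrow(\mu,\operatorname{Im}p(0))$ constructed above is bijective.
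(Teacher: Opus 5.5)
The paper does not prove this theorem; it quotes it from Nevanlinna and Cauer, so your argument has to stand on its own. Your route (pull back the Herglotz representation of Theorem~\ref{thm:herglotz_disk} through the Cayley map, then recover $(a,b,\nu)$) is the standard one. The sufficiency and uniqueness parts are sound. The necessity part, however, contains a false step as written.

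\textbf{The error: your kernel identity.} The identity you display has a spurious remainder $+t$. In fact
\[
(1+t^2)\Bigl(\frac{1}{t-z}-\frac{t}{1+t^2}\Bigr)=\frac{1+t^2-t(t-z)}{t-z}=\frac{1+tz}{t-z},
\]
which is the paper's identity \eqref{eq:kernel_id} read in reverse. The extra $+t$ creates a ``leftover'' $\int_{\mathbb{R}}t\,d(C^{-1}_*\mu)(t)=\int_{\mathbb{R}}\frac{t}{1+t^2}\,d\nu(t)$. Your claim that this is finite is false in general.

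The inequality $|t|\le(1+t^2)/2$ only bounds the leftover by $\frac12\nu(\mathbb{R})$, which is typically infinite. You obtained $\frac12\mu(\partial\mathbb{D})$ by integrating $|t|/(1+t^2)$ against $C^{-1}_*\mu$ instead of against $\nu$. For a concrete failure take $f\equiv i$. Then $p\equiv 1$, $\mu$ is normalized arc length, $C^{-1}_*\mu=\frac{dt}{\pi(1+t^2)}$ and $\nu=\frac{dt}{\pi}$. But $\int_{\mathbb{R}}\frac{|t|}{1+t^2}\,\frac{dt}{\pi}=\infty$, so the leftover is not even absolutely convergent. This divergence is exactly what the compensating term $-t/(1+t^2)$ in the Nevanlinna kernel is there to remove, so it cannot be absorbed into a real constant.

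\textbf{The repair.} With the correct identity nothing is left over, and $a=-\operatorname{Im}p(0)=\operatorname{Re}f(i)$. This is also the value your uniqueness step gives. With your version the two determinations of $a$ would differ by the spurious integral, which should have flagged the slip.

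Once this is fixed, your alternative uniqueness argument becomes the cleanest option. The data correspond via
\[
b=\mu(\{1\}),\qquad d\nu=(1+t^2)\,d(C^{-1}_*\mu),\qquad a=-\operatorname{Im}p(0),
\]
and every admissible triple arises from $\mu=b\delta_1+C_*\bigl((1+t^2)^{-1}\nu\bigr)$. So two representations of $f$ give two Herglotz representations of $p=-i\,f\circ C^{-1}$, and uniqueness in Theorem~\ref{thm:herglotz_disk} forces the triples to coincide. Your Stieltjes-inversion route also works. Fubini plus the bound $|(P_y*\varphi)(t)|\le C_\varphi(1+t^2)^{-1}$ for $0<y\le 1$ and $\varphi\in C_c(\mathbb{R})$ justifies dominated convergence, where $P_y$ is the Poisson kernel.
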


\subsection{Koenigs linearization}
\label{subsec:koenigs}

Let $g:\mathbb{D}\to\mathbb{D}$ be a hyperbolic holomorphic self-map with
Denjoy--Wolff point $\tau\in\partial\mathbb{D}$ and
multiplier $\alpha\in(0,1)$.
A (local) Koenigs function for $g$ is a nonzero holomorphic
function $h$ defined in a Stolz region $S(\tau,M)$ at $\tau$ such that
\begin{equation}
\label{eq:koenigs_func}
h(g(z))=\alpha^{-1}h(z),
\quad
|h(z)|\to\infty
\quad\text{as } z\to\tau
\text{ non-tangentially}.
\end{equation}
See, for example, \cite[p.~271, Notes~9.9]{bracci2020}. We use the reciprocal
normalization~\eqref{eq:koenigs_func}, which is more convenient for
Lemma~\ref{lem:koenigs_metric} below.

Iterating~\eqref{eq:koenigs_func}, one obtains by induction

\begin{equation*}
h(g^{\circ n}(z))
=
\alpha^{-n}h(z),
\quad n\ge1.
\end{equation*}

The following lemma relates the boundary behavior of $h$ to the hyperbolic distance near the Denjoy--Wolff point.

\begin{lemma}
\label{lem:koenigs_metric}
Let $g:\mathbb{D}\to\mathbb{D}$ be a hyperbolic holomorphic
self-map with Denjoy--Wolff point $\tau\in\partial\mathbb{D}$ and
Koenigs function $h$. Suppose that there exists a conformal map
$\phi_\tau:\mathbb{D}\to\mathbb{H}$ satisfying
$\phi_\tau(\tau)=\infty$ and
\begin{equation*}
\label{eq:koenigs_angular}
L:=\angle\lim_{z\to\tau}
\frac{h(z)}{\phi_\tau(z)}
\end{equation*}
with $0<|L|<\infty$. Then, for every fixed $w\in\mathbb{D}$,
\begin{equation}
\label{eq:koenigs_rho}
\rho_{\mathbb{D}}(z,w)
=
\log|h(z)|+O(1)
\quad
(z\to\tau\ \text{non--tangentially}).
\end{equation}
More precisely, for every $M>1$, the $O(1)$ term in
\eqref{eq:koenigs_rho} is bounded uniformly for
$z\in S(\tau,M)$ sufficiently close to $\tau$.
\end{lemma}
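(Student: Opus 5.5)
The proof splits \eqref{eq:koenigs_rho} into two comparisons: first $\log|h(z)|$ with $\log|\phi_\tau(z)|$, using the angular limit $L$, and then $\log|\phi_\tau(z)|$ with $\rho_{\mathbb{D}}(z,w)$, using Lemma~\ref{lem:boundary_asymp}.

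\emph{First comparison.} Fix $M>1$. Since $h(z)/\phi_\tau(z)\to L$ as $z\to\tau$ inside $S(\tau,M)$ and $0<|L|<\infty$, there is a neighbourhood $U$ of $\tau$ on which
\begin{equation*}
\tfrac{|L|}{2}\le\left|\frac{h(z)}{\phi_\tau(z)}\right|\le 2|L|,
\quad z\in S(\tau,M)\cap U.
\end{equation*}
Hence $\log|h(z)|=\log|\phi_\tau(z)|+O(1)$ uniformly there, with the bound $\log(2/|L|)$ or $\log(2|L|)$.

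\emph{Second comparison.} Every conformal map $\mathbb{D}\to\mathbb{H}$ with $\phi_\tau(\tau)=\infty$ has the form
\begin{equation*}
\phi_\tau(z)=a\,i\frac{\tau+z}{\tau-z}+b,\quad a>0,\ b\in\mathbb{R}.
\end{equation*}
Indeed, it is a Möbius map sending $\tau$ to $\infty$, and it differs from the Cayley map $\mathcal C(z)=i(\tau+z)/(\tau-z)$ by an automorphism of $\mathbb{H}$ fixing $\infty$. Therefore
\begin{equation*}
\log|\phi_\tau(z)|=\log\frac{1}{|\tau-z|}+O(1)\quad\text{as } z\to\tau,
\end{equation*}
because $|\tau+z|\to2$ and the additive constant $b$ is negligible once $|\phi_\tau(z)|\to\infty$. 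In $S(\tau,M)$ we have $1-|z|\le|\tau-z|<M(1-|z|)$, and $1-|z|^2\asymp 1-|z|$. This gives
\begin{equation*}
\log\frac{1}{|\tau-z|}=\log\frac{1}{1-|z|^2}+O(1),
\end{equation*}
with a constant depending only on $M$. Lemma~\ref{lem:boundary_asymp} then supplies $\log\frac{1}{1-|z|^2}=\rho_{\mathbb{D}}(z,w)+O(1)$, uniformly with the explicit constant $\log\frac{4(1+|w|)}{1-|w|}$.

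Chaining the three estimates gives $\rho_{\mathbb{D}}(z,w)=\log|h(z)|+O(1)$ for $z\in S(\tau,M)$ near $\tau$, and the constant depends only on $M$, $w$, $L$ and $\phi_\tau$, as claimed.

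The Stolz restriction is the only step that needs care. Outside a Stolz angle, $|\tau-z|$ and $1-|z|$ are no longer comparable, so the uniformity genuinely depends on $M$. Within $S(\tau,M)$ the comparison is elementary, so I do not expect a real obstacle. The one point to check is that the normal form of $\phi_\tau$ is justified, namely that it is a disk-to-half-plane Möbius map; if one prefers to avoid this, it suffices to note that any conformal $\phi_\tau$ with a simple pole-type behaviour at $\tau$ satisfies $|\phi_\tau(z)|\asymp|\tau-z|^{-1}$ near $\tau$.
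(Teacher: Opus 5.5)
Your proposal is correct and follows essentially the same route as the paper. Both arguments compare $\log|h|$ with $\log|\phi_\tau|$ via the nonzero angular limit, use the normal form $\phi_\tau(z)=a\,i\frac{\tau+z}{\tau-z}+b$ together with $|z-\tau|\asymp 1-|z|^2$ in $S(\tau,M)$, and finish with Lemma~\ref{lem:boundary_asymp}; your version only makes the constants somewhat more explicit.
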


\begin{proof}
Fix $M>1$. Since
\begin{equation*}
\angle\lim_{z\to\tau}\frac{h(z)}{\phi_\tau(z)}
=L\neq0,
\end{equation*}
we have
\begin{equation*}
|h(z)|\asymp |\phi_\tau(z)|,
\quad
(z\to\tau,\ z\in S(\tau,M)).
\end{equation*}
Hence,
\begin{equation}
\label{eq:h_phi_log}
\log|h(z)|
=
\log|\phi_\tau(z)|+O(1),
\quad
(z\to\tau,\ z\in S(\tau,M)).
\end{equation}
Since $\phi_\tau$ is a conformal map from $\mathbb{D}$ onto
$\mathbb{H}$ with $\phi_\tau(\tau)=\infty$, it has the form
\begin{equation*}
\phi_\tau(z)
=
a\,i\frac{\tau+z}{\tau-z}+b,
\quad a>0,\quad b\in\mathbb{R}.
\end{equation*}
Consequently,
\begin{equation*}
|\phi_\tau(z)|\asymp\frac{1}{|z-\tau|},
\quad (z\to\tau).
\end{equation*}
On the Stolz region $S(\tau,M)$,
\begin{equation*}
1-|z|
\le |z-\tau|
\le M(1-|z|),
\end{equation*}
and therefore
\begin{equation*}
|z-\tau|\asymp1-|z|\asymp1-|z|^2.
\end{equation*}
It follows that
\begin{equation}
\label{eq:phi_boundary_comparison}
\log|\phi_\tau(z)|
=
\log\frac{1}{1-|z|^2}+O(1),
\quad
(z\to\tau,\ z\in S(\tau,M)).
\end{equation}
Combining \eqref{eq:h_phi_log} and
\eqref{eq:phi_boundary_comparison}, we obtain
\begin{equation*}
\log|h(z)|
=
\log\frac{1}{1-|z|^2}+O(1).
\end{equation*}
On the other hand, Lemma~\ref{lem:boundary_asymp} gives, for fixed
$w\in\mathbb{D}$,
\begin{equation*}
\rho_{\mathbb{D}}(z,w)
=
\log\frac{1}{1-|z|^2}+O(1).
\end{equation*}
Subtracting the two estimates yields
\begin{equation*}
\rho_{\mathbb{D}}(z,w)
=
\log|h(z)|+O(1),
\end{equation*}
as claimed.
\end{proof}

%%=============================================================
\section{Extremal Rate in the Unit Disk and Beyond}
\label{sec:uds}
%%=============================================================

We begin by establishing the precise relationship between
the angular derivative of a hyperbolic self-map of
$\mathbb{D}$ and the asymptotic behavior of its conjugate in the upper half-plane under the Cayley transform.
This multiplier inversion is the key computational step that connects the theory of the disk and the half-plane. We then prove the Herglotz criterion
(Theorem~\ref{thm:herglotz_criterion}), characterizing
extremal rate via the logarithmic integrability of the
boundary measure $\sigma$, by reducing the disk problem
to the half-plane criterion of Cruz-Zamorano--Zarvalis
\cite{cruz} through an explicit kernel transform computation.
The equivalent Koenigs characterization
(Theorem~\ref{thm:koenigs_criterion}) is then derived,
identifying extremality with the non-degeneracy of the
linearizing coordinate at Denjoy--Wolff point.
The section closes by extending these results to simply
connected and multiply connected hyperbolic domains via
conformal conjugation and universal covering, respectively.

Throughout, the unifying principle is that extremality is
a boundary regularity condition: it fails precisely when
the boundary mass of $\sigma$ concentrates near $\tau$
at a logarithmically non-integrable rate, or equivalently,
when the Koenigs function loses its linear scaling in the
boundary variable.

\subsection{Multiplier inversion under the Cayley transform}
\label{subsec:cayley}

The following lemma shows that the Cayley transform
\begin{equation}\label{cayley tr}
\phi(z)=i\frac{1+z}{1-z}
\end{equation}
inverts the multiplier at the Denjoy--Wolff point: a hyperbolic self-map
$g$ of $\mathbb{D}$ with angular derivative $\alpha\in(0,1)$ at $\tau=1$
conjugates to a self-map $f$ of $\mathbb{H}$ satisfying
$f(w)/w\to 1/\alpha>1$ at infinity, a fact that is essential for applying
half-plane representation theory, but requires a careful proof via the
Julia--Wolff--Carath\'eodory chain rule.

\begin{lemma}
\label{lem:multiplier_inversion}
Let $g:\mathbb{D}\to\mathbb{D}$ be a hyperbolic holomorphic self-map with
the Denjoy--Wolff point $\tau=1$ and the angular derivative
$\alpha\in(0,1)$. Let $\phi$ be the Cayley transform defined in \eqref{cayley tr}, and define the conjugate map
$f=\phi\circ g\circ\phi^{-1}:\mathbb{H}\to\mathbb{H}$.
Then
\begin{enumerate}
\item[\emph{(i)}] $f$ has its Denjoy--Wolff point at $\infty$.
\item[\emph{(ii)}]
\begin{equation*}
\lim_{w\to\infty}\frac{f(w)}{w}
=
\lambda:=\frac1\alpha>1
\end{equation*}
as $w\to\infty$ non-tangentially in $\mathbb{H}$.
\end{enumerate}
\end{lemma}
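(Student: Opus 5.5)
The plan is to exploit two facts about the Cayley transform $\phi(z)=i\frac{1+z}{1-z}$: it is a biholomorphism $\mathbb{D}\to\mathbb{H}$ sending $1\mapsto\infty$, and it maps Stolz regions at $1$ onto Stolz-type sectors $\{w:\ |w|<K\operatorname{Im}w\}$ at $\infty$ (and conversely). Part (i) is then immediate. Since $g^{\circ n}\to1$ locally uniformly in $\mathbb{D}$ and $f^{\circ n}=\phi\circ g^{\circ n}\circ\phi^{-1}$, we get $f^{\circ n}(w)\to\phi(1)=\infty$ in $\widehat{\mathbb{C}}$ for every $w\in\mathbb{H}$. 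Moreover, $f$ has no fixed point in $\mathbb{H}$, because $g$ has none in $\mathbb{D}$. Hence $\infty$ is the Denjoy--Wolff point of $f$.

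For (ii), rather than invoking a chain rule for angular derivatives as a black box, I will compute directly with Julia's quotient. Write $w=\phi(z)$. Then $\operatorname{Im}\phi(z)=\frac{1-|z|^2}{|1-z|^2}$ and $|\phi(z)|=\frac{|1+z|}{|1-z|}$. As $z\to1$ we have $|1+z|\to2$, so
\begin{equation*}
\frac{\operatorname{Im} f(w)}{\operatorname{Im} w}=\frac{1-|g(z)|^2}{1-|z|^2}\cdot\frac{|1-z|^2}{|1-g(z)|^2}.
\end{equation*}
The Julia--Wolff--Carath\'eodory theorem at $\tau=1$, with $g(1)=1$ and $g'(1)=\alpha$, gives two non-tangential limits: $\frac{1-g(z)}{1-z}\to\alpha$, and, as recalled in Section~\ref{subsec:extremal_def}, $\frac{1-|g(z)|^2}{1-|z|^2}\to\alpha$. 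Hence the displayed quotient tends to $\alpha/\alpha^2=1/\alpha$. The cleaner route, however, is to compute $f(w)/w$ itself:
\begin{equation*}
\frac{f(w)}{w}=\frac{\phi(g(z))}{\phi(z)}=\frac{1+g(z)}{1+z}\cdot\frac{1-z}{1-g(z)}\longrightarrow 1\cdot\frac1\alpha,
\end{equation*}
as $z\to1$ non-tangentially, using $g(z)\to1$ and the angular limit of the difference quotient.

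It remains to transfer the approach regions. If $w\to\infty$ non-tangentially in $\mathbb{H}$, meaning $|w|\le K\operatorname{Im}w$, then $z=\phi^{-1}(w)$ satisfies
\begin{equation*}
\frac{|1-z|}{1-|z|}\le\frac{2|1-z|^2}{1-|z|^2}=\frac{2|1+z|^2}{\ldots}\cdot\frac{1}{|\ldots|}\asymp\frac{|w|}{\operatorname{Im}w}\le K'.
\end{equation*}
More precisely, $\frac{|1-z|^2}{1-|z|^2}=\frac{1}{\operatorname{Im}w}$ and $|1-z|=\frac{2}{|w+i|}$, which give $\frac{|1-z|}{1-|z|^2}=\frac{|w+i|}{2\operatorname{Im}w}$, and this is bounded. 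So $z$ stays in a Stolz region $S(1,M)$ and tends to $1$, and the limit established above applies.

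The main obstacle is checking this correspondence of approach regions carefully, together with the fact that the Julia--Wolff--Carath\'eodory conclusion $\angle\lim\frac{1-g(z)}{1-z}=\alpha$ holds in the form quoted from \cite[Theorem~1.7.3]{bracci2020}. Once both are in hand, the identity above yields $\lambda=1/\alpha>1$, which completes (ii).
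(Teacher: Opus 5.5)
Your proof is correct and follows essentially the paper's route. Part (i) comes from conjugating the iterates. Part (ii) comes from rewriting the Julia--Wolff--Carath\'eodory quotient $\frac{1-g(z)}{1-z}\to\alpha$ through the Cayley transform and checking that non-tangential sectors at $\infty$ pull back into Stolz regions at $1$. Your identity $\frac{f(w)}{w}=\frac{1+g(z)}{1+z}\cdot\frac{1-z}{1-g(z)}$ is a slightly cleaner form of the paper's $\frac{w+i}{f(w)+i}$ manipulation, and the sector-to-Stolz direction you prove is the only inclusion (ii) needs.

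One cleanup: delete your first approach-region display. Its initial inequality $\frac{|1-z|}{1-|z|}\le\frac{2|1-z|^2}{1-|z|^2}$ is equivalent to $1+|z|\le 2|1-z|$, which is false near $z=1$. The ``more precisely'' computation $\frac{|1-z|}{1-|z|^2}=\frac{|w+i|}{2\operatorname{Im}w}$ that follows it is correct and suffices on its own.
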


\begin{proof}
We first verify that $f$ maps $\mathbb{H}$ to $\mathbb{H}$. For any
$w\in\mathbb{H}$, its preimage under the Cayley transform is
$z:=\phi^{-1}(w)=(w-i)/(w+i)$, which lies in $\mathbb{D}$ since
$|w-i|<|w+i|$ whenever $\operatorname{Im}w>0$. Because $g$ maps
$\mathbb{D}$ into $\mathbb{D}$, it follows that $g(z)\in\mathbb{D}$.
Using the standard algebraic identity
\begin{equation*}
\operatorname{Im}\phi(u)
=
\operatorname{Im}\!\left(\frac{i(1+u)}{1-u}\right)
=
\frac{1-|u|^2}{|1-u|^2},
\end{equation*}
valid for all $u\in\mathbb{D}$, we obtain
\begin{equation*}
\operatorname{Im}f(w)
=
\operatorname{Im}\phi(g(z))
=
\frac{1-|g(z)|^2}{|1-g(z)|^2}
>0.
\end{equation*}
Thus, $f(w)\in\mathbb{H}$, confirming that
$f:\mathbb{H}\to\mathbb{H}$.

By induction, the iterates satisfy
$f^{\circ n}=\phi\circ g^{\circ n}\circ\phi^{-1}$ for all $n\ge1$.
Fix an arbitrary point $w_0=\phi(z_0)\in\mathbb{H}$.
Because $1$ is the Denjoy--Wolff point of $g$, we have
$g^{\circ n}(z_0)\to1$ as $n\to\infty$. Consequently,
$1+g^{\circ n}(z_0)\to2$ while
$1-g^{\circ n}(z_0)\to0$, and therefore
\begin{equation*}
|f^{\circ n}(w_0)|
=
|\phi(g^{\circ n}(z_0))|
=
\frac{|1+g^{\circ n}(z_0)|}
{|1-g^{\circ n}(z_0)|}
\longrightarrow+\infty.
\end{equation*}
Hence,
$f^{\circ n}(w_0)\to\infty$,
establishing~\emph{(i)}.

For statement~\emph{(ii)}, the classical
Julia--Wolff--Carath\'eodory theorem applied to $g$ at $\tau=1$
asserts that
\begin{equation}
\label{eq:jwc_proof}
\lim_{z\to1}
\frac{1-g(z)}{1-z}
=
\alpha.
\end{equation}
Setting $w=\phi(z)$, we invert the relation to write
$z=(w-i)/(w+i)$, which implies
\begin{equation*}
1-z=\frac{2i}{w+i}.
\end{equation*}
Similarly, $g(z)=\phi^{-1}(f(w))$
gives
\begin{equation*}
1-g(z)=\frac{2i}{f(w)+i}.
\end{equation*}
Substituting these expressions into
\eqref{eq:jwc_proof} yields
\begin{equation*}
\alpha
=
\lim_{w\to\infty}
\frac{1-g(\phi^{-1}(w))}
{1-\phi^{-1}(w)}
=
\lim_{w\to\infty}
\frac{2i/(f(w)+i)}
{2i/(w+i)}
=
\lim_{w\to\infty}
\frac{w+i}{f(w)+i}.
\end{equation*}
Since $f(w)\to\infty$ as $w\to\infty$, we have
$f(w)+i\sim f(w)$ and
$w+i\sim w$. Hence,
\begin{equation*}
\alpha
=
\lim_{w\to\infty}
\frac{w}{f(w)},
\end{equation*}
or equivalently,
\begin{equation*}
\frac{f(w)}{w}\longrightarrow\frac1\alpha=:\lambda.
\end{equation*}

It remains to verify that the Cayley transform preserves
non-tangential approach regions.
Let $w=\phi(z)$ for $z\in S(1,M)$.
Recall from the geometric relations in
Lemma~\ref{lem:koenigs_metric} that
$|1-z|\asymp1-|z|^2$
inside $S(1,M)$.
Since
$|w|\asymp1/|1-z|$
and
\begin{equation*}
\operatorname{Im}w
=
\frac{1-|z|^2}{|1-z|^2},
\end{equation*}
the ratio satisfies
\begin{equation*}
\frac{|w|}{\operatorname{Im}w}
\asymp
\frac{1/|1-z|}
{(1-|z|^2)/|1-z|^2}
=
\frac{|1-z|}{1-|z|^2}
\le C(M)<\infty.
\end{equation*}
For $c>0$, let
\begin{equation*}
\Gamma(c)
:=
\{\,w\in\mathbb H:
|\operatorname{Re}w|
<
c\,\operatorname{Im}w
\,\},
\end{equation*}
the standard non-tangential sector at $\infty$ in $\mathbb H$.
Because $|\operatorname{Re}w|\le|w|$, there exists a constant
$c_1:=c_1(M)>0$ such that
$|\operatorname{Re}w|
\le
c_1\operatorname{Im}w$,
that is,
$\phi(S(1,M))
\subseteq
\Gamma(c_1)$.

Conversely, fix $c_2>0$ and let
$w\in\Gamma(c_2)$, and set
$z=\phi^{-1}(w)$.
From
$1-z=2i/(w+i)$,
\begin{equation*}
|1-z|
=
\frac2{|w+i|},
\end{equation*}
while
\begin{equation*}
\operatorname{Im}w
=
\frac{1-|z|^2}{|1-z|^2}.
\end{equation*}
Since
$|w|
\le
\sqrt{1+c_2^2}\,\operatorname{Im}w$,
we also have
$|w+i|
\asymp
\operatorname{Im}w$,
where the implicit constants depend only on $c_2$.
Therefore
\begin{equation*}
|1-z|
\asymp
\frac1{\operatorname{Im}w},
\end{equation*}
and
\begin{equation*}
1-|z|^2
=
\operatorname{Im}w\,|1-z|^2
\asymp
\frac1{\operatorname{Im}w}.
\end{equation*}
Hence,
\begin{equation*}
|1-z|
\asymp
1-|z|^2,
\end{equation*}
which is precisely the Stolz condition.
Thus,
$\phi^{-1}(\Gamma(c_2))$
is eventually contained in a Stolz region
$S(1,M)$ for some $M$ depending only on $c_2$.

Combining both inclusions, approaching $\infty$
non-tangentially in $\mathbb{H}$ is equivalent to
approaching $1$ non-tangentially in $\mathbb{D}$.
Therefore, the limit obtained from the
Julia--Wolff--Carath\'eodory theorem is valid for every
non-tangential approach to $\infty$ in $\mathbb{H}$,
which completes the proof of~\emph{(ii)}.
\end{proof}

\subsection{The Herglotz criterion}
\label{subsec:herglotz_criterion}
With the multiplier inversion of Lemma~\ref{lem:multiplier_inversion} in hand, we are now in a position to prove one of the main results of this section. The theorem consists of two core parts.

The first part establishes that every hyperbolic self-map $g$ of $\mathbb{D}$ admits a canonical integral representation of the form~\eqref{eq:disk_rep} below, in which the boundary dynamics are explicitly encoded by a positive Borel measure $\sigma$ on $\partial\mathbb{D}\setminus\{\tau\}$ along with the angular multiplier $\alpha$. This representation serves as the disk analogue of the classical Nevanlinna representation~\eqref{eq:nevanlinna_rep} for holomorphic self-maps of the upper half-plane, and is derived from it via a direct Cayley-transform change of variables.

The second part identifies the precise constraint on the boundary mass distribution $\sigma$ that governs extremality: the $O(1)$ error term in the hyperbolic distance asymptotic is uniformly bounded if and only if $\sigma$ satisfies the logarithmic integrability condition~\eqref{eq:extremal_condition} near $\tau$. This condition measures the exact concentration of boundary mass near the Denjoy--Wolff point and is strictly stronger than requiring the total mass of $\sigma$ to be finite, as highlighted by the examples in the following subsection.

%-------------------------------------
\begin{theorem}
\label{thm:herglotz_criterion}
Let $g:\mathbb{D}\to\mathbb{D}$ be a hyperbolic holomorphic self-map
with Denjoy--Wolff point $\tau\in\partial\mathbb{D}$ and angular
multiplier $\alpha:=g'(\tau)$.

Then there exist a real constant $b\in\mathbb{R}$ and a finite positive
Borel measure $\sigma$ on
$\partial\mathbb{D}\setminus\{\tau\}$ such that
\begin{equation*}
\label{eq:sigma_integrability}
\sigma\bigl(\partial\mathbb{D}\setminus\{\tau\}\bigr)<\infty
\end{equation*}
and
\begin{equation}
\label{eq:disk_rep}
\frac{1+\overline{\tau}g(z)}
     {1-\overline{\tau}g(z)}
=
\frac{1}{\alpha}
\frac{1+\overline{\tau}z}
     {1-\overline{\tau}z}
+ib
+\int_{\partial\mathbb{D}\setminus\{\tau\}}
\frac{\zeta+z}{\zeta-z}\,d\sigma(\zeta),
\quad z\in\mathbb{D}.
\end{equation}
Moreover, $g$ has an extremal hyperbolic rate if and only if
\begin{equation}
\label{eq:extremal_condition}
\int_{\partial\mathbb{D}\setminus\{\tau\}}
\log\frac{1}{|\zeta-\tau|}\,d\sigma(\zeta)
<\infty.
\end{equation}
\end{theorem}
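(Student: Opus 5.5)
We reduce everything to the normalized situation $\tau=1$ by replacing $g$ with $\tilde g(z)=\overline{\tau}g(\tau z)$. This is a rotation conjugate, so it has Denjoy--Wolff point $1$ and the same multiplier $\alpha$. Because rotations are hyperbolic isometries, the extremal-rate property is unchanged. The measure $\sigma$ is only pushed forward by $\zeta\mapsto\overline{\tau}\zeta$, so $|\zeta-\tau|$ becomes $|\zeta-1|$ and the integral \eqref{eq:extremal_condition} is unchanged.

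With $\tau=1$, set $p(z)=\frac{1+g(z)}{1-g(z)}$, which has positive real part on $\mathbb{D}$. By Theorem~\ref{thm:herglotz_disk}, $p(z)=\int_{\partial\mathbb{D}}\frac{\zeta+z}{\zeta-z}\,d\mu(\zeta)+i\operatorname{Im}p(0)$. Split $\mu=\mu(\{1\})\delta_1+\sigma$ with $\sigma:=\mu|_{\partial\mathbb{D}\setminus\{1\}}$, which is finite. The atom contributes $\mu(\{1\})\frac{1+z}{1-z}$. Its weight is identified by taking the radial limit $(1-r)p(r)$ as $r\to1^-$. On one hand, this limit equals $2\mu(\{1\})$ by dominated convergence on the $\sigma$ part. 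On the other hand, $(1-r)\frac{1+g(r)}{1-g(r)}\to 2/g'(1)=2/\alpha$ by the Julia--Wolff--Carath\'eodory theorem in the form \eqref{eq:jwc_proof}. Hence $\mu(\{1\})=1/\alpha$, which gives \eqref{eq:disk_rep} with $b=\operatorname{Im}p(0)$.

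For the equivalence, multiply by $i$. Since $ip(z)=\phi(g(z))$ with $\phi$ the Cayley transform \eqref{cayley tr}, the conjugate $f=\phi\circ g\circ\phi^{-1}$ from Lemma~\ref{lem:multiplier_inversion} satisfies
\[
f(w)=\frac{w}{\alpha}-b+\int_{\partial\mathbb{D}\setminus\{1\}} i\,\frac{\zeta+z}{\zeta-z}\,d\sigma(\zeta),\qquad z=\phi^{-1}(w).
\]
We then change variables $t=\phi(\zeta)=-\cot(\theta/2)\in\mathbb{R}$ for $\zeta=e^{i\theta}\neq1$, and compute the kernel identity
\[
i\,\frac{\zeta+z}{\zeta-z}=(1+t^2)\Bigl(\frac{1}{t-w}-\frac{t}{1+t^2}\Bigr).
\]
This puts $f$ in the form \eqref{eq:nevanlinna_rep} with slope $1/\alpha$ and Nevanlinna measure $d\nu(t)=(1+t^2)\,d\sigma(\phi^{-1}(t))$. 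Lemma~\ref{lem:multiplier_inversion} confirms that $\lambda=1/\alpha$ is the correct spectral value.

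The next step is to invoke Cruz-Zamorano--Zarvalis \cite[Theorem~4.4]{cruz}: $f$ is of extremal rate exactly when $\int_{\mathbb{R}}\frac{\log(1+|t|)}{1+t^2}\,d\nu(t)<\infty$, up to the precise weight used there. Under our substitution this integral becomes $\int\log(1+|t|)\,d\sigma$. Because $|\zeta-1|=2|\sin(\theta/2)|$ and $|t|=|\cot(\theta/2)|$, we have $\log(1+|t|)=\log\frac{1}{|\zeta-1|}+O(1)$ near $\zeta=1$, with both sides bounded on the rest of the circle. Since $\sigma$ is finite, the two integrability conditions coincide.

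The main obstacle is matching their notion of extremality, namely $f^{\circ n}(w)/\lambda^n\to L\in\mathbb{H}$, with Definition~\ref{def:extremal_rate}. One direction is direct. If $f^{\circ n}(w)\sim L\lambda^n$ with $L\in\mathbb{H}$, then $f^{\circ n}(w)$ lies in a fixed sector $\Gamma(c)$ and $\operatorname{Im}f^{\circ n}(w)\asymp\lambda^n$. Then $\rho_{\mathbb{H}}(f^{\circ n}(w),i)=\log\operatorname{Im}f^{\circ n}(w)+O(1)$ in that sector, which is the half-plane version of Lemma~\ref{lem:boundary_asymp} combined with the Stolz comparison in Lemma~\ref{lem:multiplier_inversion}. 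This gives $n\log(1/\alpha)+O(1)$.

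For the converse, we plan to use the monotonicity available in the half-plane. From the Nevanlinna form, $\operatorname{Im}f(w)\ge\lambda\operatorname{Im}w$, so $\operatorname{Im}f^{\circ n}(w)/\lambda^n$ is nondecreasing. The extremal bound gives $\rho_{\mathbb{H}}\ge\log\operatorname{Im}-O(1)$ together with $\rho_{\mathbb{H}}\ge\log(|\cdot|^2/\operatorname{Im})-O(1)$. Hence bounded deviation forces both $\operatorname{Im}f^{\circ n}(w)\lambda^{-n}$ and $|f^{\circ n}(w)|\lambda^{-n}$ to stay bounded. The first sequence is monotone, so it converges to a positive limit, which is the condition used in \cite{cruz}. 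If that theorem is stated only with the limit $L$ itself, we would instead extract $L$ by dominated convergence in the telescoping series for $f^{\circ n}(w)/\lambda^n$, using boundedness of the imaginary parts.
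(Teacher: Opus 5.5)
Your proposal is correct and follows the paper's proof in all essentials: you reduce to $\tau=1$, pass to $\mathbb{H}$ by the Cayley transform, use the kernel identity to match the Nevanlinna data of $f$ with $\sigma$, appeal to \cite[Theorem~4.4]{cruz}, and compare $\log(1+|t|)=\log\frac{1}{|\zeta-1|}+O(1)$. Where you go beyond the paper, your arguments are sound and worth keeping: the radial computation $\mu(\{1\})=1/\alpha$ makes the linear coefficient explicit, and your bridge between the Euclidean extremality of \cite{cruz} and Definition~\ref{def:extremal_rate} supplies a step that the paper covers only by noting that the Cayley map is an isometry; in particular your dominated-convergence fallback does close, because Tonelli applied to the telescoping imaginary parts (with $f^{\circ n}(w)$ in a fixed sector and $|f^{\circ n}(w)|\asymp\lambda^n$) gives $\int\frac{1+\log(1+|t|)}{1+t^2}\,d\nu(t)<\infty$, and this dominates the summed real increments.
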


\begin{proof}
We first consider the case $\tau=1$. The general case follows by
conjugation with a rotation of $\mathbb{D}$.

Let $\phi:\mathbb{D}\to\mathbb{H}$ be the Cayley transform \eqref{cayley tr}
and define
\begin{equation*}
f=\phi\circ g\circ\phi^{-1}:\mathbb{H}\to\mathbb{H}.
\end{equation*}
By Lemma~\ref{lem:multiplier_inversion}, $f$ is hyperbolic with
Denjoy--Wolff point $\infty$ and angular derivative
\begin{equation*}
f'(\infty)=\frac{1}{\alpha}>1.
\end{equation*}

By the Nevanlinna representation, there exist $a\in\mathbb{R}$ and a
positive Borel measure $\nu$ on $\mathbb{R}$ satisfying
\begin{equation*}
\int_{\mathbb{R}}\frac{d\nu(t)}{1+t^2}<\infty
\end{equation*}
such that
\begin{equation}
\label{eq:f_rep}
f(w)
=
a+\frac{1}{\alpha}w
+\int_{\mathbb{R}}
\left(
\frac{1}{t-w}-\frac{t}{1+t^2}
\right)d\nu(t),
\quad w\in\mathbb{H}.
\end{equation}

For $\zeta\in\partial\mathbb{D}\setminus\{1\}$, put
\begin{equation*}
t=\phi(\zeta)
=i\frac{1+\zeta}{1-\zeta}\in\mathbb{R},
\end{equation*}
and, for $z\in\mathbb{D}$, put
\begin{equation*}
w=\phi(z)=i\frac{1+z}{1-z}.
\end{equation*}
A direct calculation gives
\begin{equation}
\label{eq:kernel_id}
\frac{1}{t-w}-\frac{t}{1+t^2}
=
\frac{i}{1+t^2}\frac{\zeta+z}{\zeta-z}.
\end{equation}
Indeed,
\begin{equation*}
\frac{1}{t-w}-\frac{t}{1+t^2}
=
\frac{1+tw}{(t-w)(1+t^2)},
\end{equation*}
while
\begin{equation*}
\frac{1+tw}{t-w}
=
i\frac{\zeta+z}{\zeta-z}.
\end{equation*}

Let $\sigma$ be the measure on
$\partial\mathbb{D}\setminus\{1\}$ obtained by pushing forward the
measure $(1+t^2)^{-1}d\nu(t)$ under the inverse Cayley transform
$t\mapsto\zeta=\phi^{-1}(t)$. Equivalently,
\begin{equation}
\label{eq:sigma_definition}
d\sigma(\zeta)
=
\frac{1}{1+t^2}\,d\nu(t),
\quad
t=\phi(\zeta).
\end{equation}
Since
\begin{equation*}
\frac{1}{1+t^2}
=
\frac{|1-\zeta|^2}{4},
\end{equation*}
the measure $\sigma$ is positive. Moreover,
\begin{equation*}
\sigma\bigl(\partial\mathbb{D}\setminus\{1\}\bigr)
=
\int_{\mathbb{R}}\frac{d\nu(t)}{1+t^2}
<\infty.
\end{equation*}
Using \eqref{eq:kernel_id} and
$d\nu(t)=(1+t^2)d\sigma(\zeta)$ in \eqref{eq:f_rep}, we obtain
\begin{equation*}
f(w)
=
a+\frac{1}{\alpha}w
+i\int_{\partial\mathbb{D}\setminus\{1\}}
\frac{\zeta+z}{\zeta-z}\,d\sigma(\zeta).
\end{equation*}
Since
\begin{equation*}
f(w)=i\frac{1+g(z)}{1-g(z)}
\quad\text{and}\quad
w=i\frac{1+z}{1-z},
\end{equation*}
division by $i$ gives
\begin{equation*}
\label{eq:g_rep_tau1}
\frac{1+g(z)}{1-g(z)}
=
\frac{1}{\alpha}\frac{1+z}{1-z}
-ia
+\int_{\partial\mathbb{D}\setminus\{1\}}
\frac{\zeta+z}{\zeta-z}\,d\sigma(\zeta).
\end{equation*}
Thus, \eqref{eq:disk_rep} holds for $\tau=1$, with $b=-a$.

It remains to characterize extremality. In the half-plane setting,
the Herglotz representation
\begin{equation*}
f(w)
=
\frac{1}{\alpha}w+a
+
\int_{\mathbb{R}}
\frac{1+tw}{t-w}\,d\mu(t),
\end{equation*}
where
\begin{equation*}
d\mu(t)=\frac{1}{1+t^2}\,d\nu(t),
\end{equation*}
is related to extremal rate by the criterion
\begin{equation*}
\int_{\mathbb{R}}\log(1+|t|)\,d\mu(t)<\infty.
\end{equation*}
This is precisely the Herglotz characterization of extremal rate for
hyperbolic self-maps of $\mathbb{H}$; see
\cite[Theorem~4.4]{cruz}.
Consequently, in terms of $\nu$, extremality is equivalent to
\begin{equation}
\label{eq:half_plane_cond}
\int_{\mathbb{R}}
\frac{\log(1+|t|)}{1+t^2}\,d\nu(t)
<\infty.
\end{equation}

Since the Cayley transform is a hyperbolic isometry, extremality is
preserved under the conjugation between $\mathbb{D}$ and $\mathbb{H}$.
By \eqref{eq:sigma_definition}, condition \eqref{eq:half_plane_cond}
is equivalent to
\begin{equation*}
\int_{\partial\mathbb{D}\setminus\{1\}}
\log(1+|\phi(\zeta)|)\,d\sigma(\zeta)<\infty.
\end{equation*}
Now
\begin{equation*}
|\phi(\zeta)|
=
\frac{|1+\zeta|}{|1-\zeta|}.
\end{equation*}
As $\zeta\to1$ along $\partial\mathbb{D}$,
$|1+\zeta|\to2$, and hence,
\begin{equation*}
\log(1+|\phi(\zeta)|)
=
\log\frac{1}{|1-\zeta|}+O(1).
\end{equation*}
More precisely, the difference between the two functions is bounded
in a sufficiently small punctured neighborhood of $1$. Since $\sigma$
is finite, their difference is $\sigma$-integrable there. On the
complement of such a neighborhood, both functions are bounded.
Therefore,
\begin{equation*}
\int_{\partial\mathbb{D}\setminus\{1\}}
\log(1+|\phi(\zeta)|)\,d\sigma(\zeta)<\infty
\end{equation*}
if and only if
\begin{equation*}
\int_{\partial\mathbb{D}\setminus\{1\}}
\log\frac{1}{|1-\zeta|}\,d\sigma(\zeta)<\infty.
\end{equation*}
This proves \eqref{eq:extremal_condition} when $\tau=1$.

Finally, for a general $\tau\in\partial\mathbb{D}$, conjugation by the
rotation $R_\tau(z)=\overline{\tau}z$ maps $\tau$ to $1$ and preserves
the hyperbolic metric. Applying the preceding argument to
$R_\tau\circ g\circ R_\tau^{-1}$ and pulling the resulting measure back
under $R_\tau^{-1}$ gives \eqref{eq:disk_rep} and
\eqref{eq:extremal_condition}. The proof is now complete.
\end{proof}

%'''''''''''''''''''''''''''''''''''''''''''''''''''''''''''''''''''''''''''''''

%%%%%%%%%%%%%%%%%%%%%%%%%%%%%%%%%%%%%%%%

%%%%%%%%%%%%%%%%%%%%%%%%%%%%%%%%
%%%%%%%%%%%%%%%%%%%%%%%%%%%%%%%%
\begin{remark}
\label{rem:self_map_constraint}
The representation~\eqref{eq:disk_rep} is not a definition of $g$ but a
structural consequence of the self-map property. More precisely, for a
given hyperbolic self-map $g$, let
\[
f=\phi\circ g\circ\phi^{-1}:\mathbb H\to\mathbb H
\]
be its conjugate in the upper half-plane. The Nevanlinna representation
of $f$ then yields the real constant $b$ and the finite positive Borel
measure $\sigma$ appearing in~\eqref{eq:disk_rep}. Thus, the fact that
$g$ is a self-map of $\mathbb D$ is already encoded in the origin of the
representation and is not an additional condition imposed independently
on $\sigma$.

Consequently, one should not regard an arbitrary pair
$(\alpha,\sigma)$, with $\alpha\in(0,1)$ and $\sigma$ a finite positive
Borel measure on
$\partial\mathbb D\setminus\{\tau\}$, as automatically arising from a
hyperbolic self-map through~\eqref{eq:disk_rep}. The representation
describes the Herglotz--Nevanlinna data associated with a self-map; it is
not, in the present formulation, being used as a converse
parameterization of all such maps.

Finally, the logarithmic integrability condition
\eqref{eq:extremal_condition} is strictly stronger than the finiteness
of the total mass
$\sigma(\partial\mathbb D\setminus\{\tau\})$. Thus, although the
Herglotz measure $\sigma$ is finite for every hyperbolic self-map, its
mass may nevertheless be concentrated sufficiently strongly near the
Denjoy--Wolff point $\tau$ so that
\[
\int_{\partial\mathbb D\setminus\{\tau\}}
\log\frac{1}{|\zeta-\tau|}
\,d\sigma(\zeta)
=\infty.
\]
Such logarithmically non-integrable concentration is precisely what
leads to the non-extremal regime, as illustrated in
Example~\ref{ex:concentrated} below.
\end{remark}

%\subsection{Examples}
The following example shows that when the boundary measure
is spread uniformly around $\partial\mathbb{D}$, with no
concentration near Denjoy--Wolff point, the logarithmic
integrability condition~\eqref{eq:extremal_condition} is
satisfied and the corresponding self-map achieves extremal
hyperbolic rate.
\begin{example}
\label{ex:uniform}
Let $\tau=1$ and let $\sigma$ be the normalized Lebesgue measure
on $\partial\mathbb{D}\setminus\{1\}$, that is,
$d\sigma(\zeta)=d\theta/(2\pi)$, where $\zeta=e^{i\theta}$ and
$\theta\in(-\pi,\pi)$.
It is not difficult to see that this is a finite positive Borel measure with total mass
\begin{equation*}
\sigma(\partial\mathbb{D}\setminus\{1\})
=\frac{1}{2\pi}\int_{-\pi}^{\pi}d\theta
=\frac{1}{2\pi}\cdot 2\pi=1,
\end{equation*}
where the single point $\theta=0$ (corresponding to
$\zeta=1$) has the Lebesgue measure zero and is excluded
without affecting the integral.

We verify condition~\eqref{eq:extremal_condition}.
By the even symmetry of the integrand,
\begin{equation*}
\int_{\partial\mathbb{D}\setminus\{1\}}
\log\frac{1}{|\zeta-1|}\,d\sigma(\zeta)
=\frac{1}{\pi}\int_0^{\pi}
\log\frac{1}{|e^{i\theta}-1|}\,d\theta.
\end{equation*}
The exact identity $|e^{i\theta}-1|=2\sin(\theta/2)$
for $\theta\in(0,\pi]$ gives
$\log(1/|e^{i\theta}-1|)=\log(1/(2\sin(\theta/2)))$.
It is well known that
$\int_0^\pi\log(2\sin(\theta/2))\,d\theta=0$. Therefore,
\begin{equation*}
\int_{\partial\mathbb{D}\setminus\{1\}}
\log\frac{1}{|\zeta-1|}\,d\sigma(\zeta)
=\frac{1}{\pi}\int_0^\pi\log\frac{1}{2\sin(\theta/2)}\,d\theta
=0<\infty.
\end{equation*}
Condition~\eqref{eq:extremal_condition} holds (with the integral equal to zero), therefore, by
Theorem~\ref{thm:herglotz_criterion}, the corresponding hyperbolic self-map $g$
with $\tau=1$ and $\sigma$ equal to the normalized Lebesgue measure has an extremal hyperbolic rate.
\end{example}

The next example demonstrates that concentrating the boundary measure near the Denjoy--Wolff point $\tau$, even while keeping the total mass finite, causes the logarithmic moment to diverge and forces strictly non-extremal behavior.
%%%%%%%%%%%%%%%%%%%%%%%%%%%%%%%%%%%%
\begin{example}
\label{ex:concentrated}
Let $\varepsilon\in(0,1)$ and define a positive Borel measure $\sigma$ on $\partial\mathbb{D}\setminus\{1\}$ via the boundary density
\begin{equation}
\label{eq:conc_measure}
d\sigma(\zeta)
=\frac{1}{2\pi}
\cdot
\frac{\chi_{(-\varepsilon,\varepsilon)\setminus\{0\}}(\theta)}
{|\theta|\log^2(1/|\theta|)}
\,d\theta,
\quad\zeta=e^{i\theta},
\quad\theta\in(-\pi,\pi)\setminus\{0\}.
\end{equation}
For $0 < |\theta| < \varepsilon < 1$, we have $|\theta|\log^2(1/|\theta|)>0$, meaning the density is strictly positive on its support. Thus, the measure $\sigma$ is well-defined and supported on the punctured symmetric arc $\{e^{i\theta}:0 < |\theta|<\varepsilon\} \subset\partial\mathbb{D}\setminus\{1\}$.

Exploiting the even symmetry of the integrand and applying the substitution $u=\log(1/\theta)$, we compute the total mass of the measure:
\begin{equation*}
\sigma(\partial\mathbb{D}\setminus\{1\})
=\frac{1}{\pi}\int_0^{\varepsilon}
\frac{d\theta}{\theta\log^2(1/\theta)}
=\frac{1}{\pi}\int_{\log(1/\varepsilon)}^{\infty}
\frac{du}{u^2}
=\frac{1}{\pi\log(1/\varepsilon)}<\infty,
\end{equation*}
which establishes that $\sigma$ is finite. Next, we check the logarithmic moment. Using the standard boundary distance estimate $|e^{i\theta}-1| = 2\sin(\theta/2)\le\theta$ for $\theta\in(0,\varepsilon)$, it follows that $\log(1/|e^{i\theta}-1|) \ge \log(1/\theta)$. Therefore:
\begin{equation*}
\int_{\partial\mathbb{D}\setminus\{1\}}
\log\frac{1}{|\zeta-1|}\,d\sigma(\zeta)
\ge\frac{1}{\pi}\int_0^{\varepsilon}
\frac{\log(1/\theta)}{\theta\log^2(1/\theta)}\,d\theta
=\frac{1}{\pi}\int_0^{\varepsilon}
\frac{d\theta}{\theta\log(1/\theta)}
=\frac{1}{\pi}\int_{\log(1/\varepsilon)}^{\infty}
\frac{du}{u}=+\infty.
\end{equation*}
Because the structural condition~\eqref{eq:extremal_condition} fails, Theorem~\ref{thm:herglotz_criterion} implies that the corresponding hyperbolic self-map $g$ is strictly non-extremal. While its iterates still converge to $\tau=1$ by the classical Denjoy--Wolff theorem, the boundary measure concentrates too much mass near the singularity at $\theta=0$ to sustain an extremal hyperbolic rate.
\end{example}

\begin{remark}
The density chosen in~\eqref{eq:conc_measure} sits precisely at the critical threshold separating extremal and non-extremal behavior. For the parametric family of boundary densities given by $\rho_p(\theta) = (|\theta|\log^p(1/|\theta|))^{-1}$ with $p>1$, the total mass of the corresponding measure remains finite for all values of $p$. However, the logarithmic moment is finite if and only if $p>2$. Consequently, for $p>2$, the generated map $g$ exhibits an extremal hyperbolic rate, while for $1<p\le 2$, the map is non-extremal. Example~\ref{ex:concentrated} maps directly to the critical limit case where $p=2$.
\end{remark}

%%%%%%%%%%%%%%%%%%%%%%%%%%%%%%%%%%%%%%%%%%%%%%%%%%%%%%%%%%%%%%%%5

\subsection{The Koenigs characterization}
\label{subsec:koenigs_characterization}

The following result gives an equivalent characterization of the
extremal hyperbolic rate in terms of the boundary regularity of the
Koenigs function. In particular, extremality is equivalent to the
conformality of the Koenigs function at the Denjoy--Wolff point.

\begin{theorem}
\label{thm:koenigs_criterion}
Let $g:\mathbb{D}\to\mathbb{D}$ be a hyperbolic holomorphic self-map
with Denjoy--Wolff point $\tau\in\partial\mathbb{D}$ and angular
multiplier $\alpha\in(0,1)$. Let
$h:\mathbb{D}\to\mathbb{H}$ be a Koenigs function of $g$, normalized by
\begin{equation*}
h\circ g=\alpha^{-1}h.
\end{equation*}
Then the following are equivalent:
\begin{enumerate}
\item[\emph{(i)}] $g$ has an extremal hyperbolic rate;
\item[\emph{(ii)}] $h$ is conformal at $\tau$, in the sense that
for some (equivalently, every) conformal map
$\phi_\tau:\mathbb{D}\to\mathbb{H}$ satisfying
$\phi_\tau(\tau)=\infty$,
\begin{equation*}
\label{eq:koenigs_boundary_ratio}
\angle\lim_{z\to\tau}
\frac{h(z)}{\phi_\tau(z)}
\in\mathbb{C}\setminus\{0\}.
\end{equation*}
\end{enumerate}
\end{theorem}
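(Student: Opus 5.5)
We first reduce to $\tau=1$ by conjugating with a rotation, and we move to the half-plane through the Cayley transform $\phi$ of \eqref{cayley tr}. Set $f=\phi\circ g\circ\phi^{-1}$ and $H=h\circ\phi^{-1}:\mathbb H\to\mathbb H$. Then $H\circ f=\lambda H$ with $\lambda=1/\alpha$. By Lemma~\ref{lem:multiplier_inversion}, non-tangential approach to $1$ in $\mathbb D$ corresponds to non-tangential approach to $\infty$ in $\mathbb H$. Every conformal $\phi_\tau:\mathbb D\to\mathbb H$ with $\phi_\tau(1)=\infty$ has the form $a\phi+b$ with $a>0$, $b\in\mathbb R$, so $\phi_\tau(z)/\phi(z)\to a$. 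Hence the angular limit in (ii) exists and is non-zero for one such $\phi_\tau$ if and only if it does for every one. This settles the ``some/every'' clause. Condition (ii) then reads
\[
\angle\lim_{w\to\infty}H(w)/w=L\in\mathbb C\setminus\{0\}.
\]

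\textbf{(ii)$\Rightarrow$(i).} Fix $z\in\mathbb D$. The orbit $z_n=g^{\circ n}(z)$ converges to $1$ non-tangentially because $g$ is hyperbolic, so it eventually lies in some Stolz region $S(1,M)$. Lemma~\ref{lem:koenigs_metric} gives $\rho_{\mathbb D}(z_n,w)=\log|h(z_n)|+O(1)$ uniformly there. Since $h(z_n)=\alpha^{-n}h(z)$, this yields
\[
\rho_{\mathbb D}(z_n,w)=n\log\tfrac1\alpha+\log|h(z)|+O(1),
\]
which is extremality.

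\textbf{(i)$\Rightarrow$(ii).} This is the main obstacle. We plan to use the Cruz-Zamorano--Zarvalis machinery. Extremality is invariant under the isometric conjugation by $\phi$. Via Theorem~\ref{thm:herglotz_criterion} and \cite[Theorem~4.4]{cruz}, it is equivalent to the half-plane extremality of $f$: there is $\Lambda(w)\in\mathbb H$ with $f^{\circ n}(w)/\lambda^n\to\Lambda(w)$. If we only had the bound on $\rho$, we would argue directly. From $\rho_{\mathbb H}(f^{\circ n}(w),i)=n\log\lambda+O(1)$ and $\operatorname{Im}f^{\circ n}(w)\ge\lambda^n\operatorname{Im}w$ (Julia's lemma in $\mathbb H$), we get $|f^{\circ n}(w)|\asymp\lambda^n$ and $\operatorname{Im}f^{\circ n}(w)\asymp\lambda^n$. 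The Nevanlinna representation \eqref{eq:f_rep} then gives $f(u)=\lambda u+a+\varepsilon(u)$ along the orbit. Summing $\lambda^{-n-1}\varepsilon(f^{\circ n}(w))$, which is controlled by the log-moment \eqref{eq:extremal_condition}, shows that $\lambda^{-n}f^{\circ n}(w)$ converges.

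The function $\Lambda$ satisfies $\Lambda\circ f=\lambda\Lambda$, is holomorphic as a locally uniform limit, and maps into $\mathbb H$. We then show $\Lambda(w)/w\to1$ non-tangentially. Write
\[
\Lambda(w)-w=\sum_{n\ge0}\lambda^{-n-1}\bigl(a+\varepsilon(f^{\circ n}(w))\bigr),
\]
and bound it by $o(|w|)$ in sectors using the same log-moment estimate, uniformly in $w$. Finally we identify $H$ with a multiple of $\Lambda$. Both $H$ and $\Lambda$ are Koenigs functions into $\mathbb H$ for the same hyperbolic map, and the uniqueness of Koenigs functions (up to a positive multiplicative constant in the half-plane normalization; \cite[Ch.~9]{bracci2020}) gives $H=c\Lambda$ with $c>0$. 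Therefore $H(w)/w\to c\neq0$ non-tangentially, which is (ii).

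The delicate points are the uniform sector estimate $\Lambda(w)=w(1+o(1))$ and the invocation of Koenigs uniqueness. If the latter is unavailable in the needed form, we will instead compare $H$ and $\Lambda$ through $H\circ\Lambda^{-1}$ on the image. This map commutes with multiplication by $\lambda$ and is a self-map of $\mathbb H$, so we expect to show it is linear.
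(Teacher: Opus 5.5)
Your reduction to $\tau=1$, the ``some/every'' argument, and (ii)$\Rightarrow$(i) via Lemma~\ref{lem:koenigs_metric} are exactly the paper's. The difference is in (i)$\Rightarrow$(ii), where the paper does no work of its own. It transfers extremality to $f=\phi\circ g\circ\phi^{-1}$ and quotes the half-plane Koenigs characterization \cite[Theorem~5.2]{cruz}. In doing so it tacitly identifies the metric notion of extremality with the Euclidean one used there, namely $f^{\circ n}(w)/\lambda^n\to L\in\mathbb H$.

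You instead rebuild that theorem in four steps:
\begin{enumerate}
\item[(a)] a metric-to-Euclidean bridge via Julia's lemma;
\item[(b)] construction of the Valiron limit $\Lambda$;
\item[(c)] the sector estimate $\Lambda(w)=w(1+o(1))$, derived from the log-moment;
\item[(d)] a uniqueness theorem to obtain $H=c\Lambda$.
\end{enumerate}
This is sound in outline. The sector estimate does close: an induction keeps $|f^{\circ n}(w)|\le 2\lambda^n|w|$ for $w$ in a sector with large imaginary part, and dominated convergence against the $\log^+$ moment then gives the $o(|w|)$ bound. Your route also exposes what the citation hides.

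However, both of your delicate points can be bypassed. Since $H$ maps $\mathbb H$ into $\mathbb H$, the half-plane Julia--Wolff--Carath\'eodory theorem gives $\beta:=\angle\lim_{u\to\infty}H(u)/u\in[0,\infty)$; equivalently, $\beta$ is the coefficient $b$ in Theorem~\ref{thm:nevanlinna} applied to $H$. The bounds you already derived,
\[
\lambda^n\operatorname{Im}w\le\operatorname{Im}f^{\circ n}(w)\le|f^{\circ n}(w)|\le C\lambda^n,
\]
keep the orbit in a fixed sector. Hence
\[
H(w)=\lambda^{-n}H\bigl(f^{\circ n}(w)\bigr)=\beta\,\lambda^{-n}f^{\circ n}(w)+o(1),\qquad n\to\infty.
\]
If $\beta=0$, this forces $H(w)=0\notin\mathbb H$. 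Hence $\beta\in(0,\infty)$, which is (ii) with $\phi_\tau=\phi$.

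Passing to the limit in the display also gives $H=\beta\Lambda$. So the uniqueness you planned to import comes for free, and the limit in (ii) is automatically real and positive. This version of (i)$\Rightarrow$(ii) needs neither $\Lambda$'s sector asymptotics, nor Theorem~\ref{thm:herglotz_criterion}, nor \cite{cruz}.

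Finally, drop the fallback through $H\circ\Lambda^{-1}$. The relation $\Lambda\circ f=\lambda\Lambda$ gives $\Lambda(w_1)=\Lambda(w_2)$ whenever $f(w_1)=f(w_2)$. So $\Lambda$ is not injective as soon as $g$ is not univalent, and $\Lambda^{-1}$ is undefined in general.
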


\begin{proof}
We first consider the case $\tau=1$. Let $\phi$ be the Cayley map defined in \eqref{cayley tr}
and define
\begin{equation*}
f=\phi\circ g\circ\phi^{-1}:\mathbb{H}\to\mathbb{H}.
\end{equation*}
By Lemma~\ref{lem:multiplier_inversion}, $f$ is hyperbolic with
Denjoy--Wolff point $\infty$ and multiplier
\begin{equation*}
f'(\infty)=\frac1\alpha>1.
\end{equation*}
Set
$H=h\circ\phi^{-1}.$
Then
$H\circ f=\alpha^{-1}H$,
so $H$ is a Koenigs function for $f$.

\medskip
\noindent\emph{(ii) $\Rightarrow$ (i).}
Assume that
\begin{equation*}
L:=\angle\lim_{z\to1}\frac{h(z)}{\phi(z)}
\end{equation*}
exists and satisfies $0<|L|<\infty$. Let
$z_n=g^{\circ n}(z)$ for a fixed $z\in\mathbb{D}$. Since $g$ is
hyperbolic, $z_n\to1$ non--tangentially. Hence, by
Lemma~\ref{lem:koenigs_metric},
\begin{equation*}
\rho_{\mathbb{D}}(z_n,w)
=
\log|h(z_n)|+O(1).
\end{equation*}
The Koenigs functional equation gives
\begin{equation*}
h(z_n)=\alpha^{-n}h(z),
\end{equation*}
and therefore
\begin{align*}
\rho_{\mathbb{D}}(z_n,w)
&=
\log|\alpha^{-n}h(z)|+O(1)
=
n\log\frac1\alpha+\log|h(z)|+O(1)\\
&=
n\log\frac1\alpha+O(1).
\end{align*}
Thus, $g$ has an extremal hyperbolic rate.

\medskip
\noindent\emph{(i) $\Rightarrow$ (ii).}
Conversely, suppose that $g$ has an extremal hyperbolic rate. Since
$\phi$ is a hyperbolic isometry, the conjugate map $f$ also has an
extremal hyperbolic rate. The Koenigs function of $f$ is $H$ and
satisfies
\begin{equation*}
H\circ f=\alpha^{-1}H.
\end{equation*}
By the Koenigs characterization for hyperbolic self-maps of
$\mathbb{H}$, see~\cite[Theorem~5.2]{cruz}, extremality of $f$ is
equivalent to conformality of $H$ at infinity, namely
\begin{equation*}
\angle\lim_{w\to\infty}\frac{H(w)}{w}
=L
\in\mathbb{C}\setminus\{0\}.
\end{equation*}
Since $H(\phi(z))=h(z)$, this is equivalent to
\begin{equation*}
\angle\lim_{z\to1}\frac{h(z)}{\phi(z)}
=
L
\in\mathbb{C}\setminus\{0\}.
\end{equation*}

For a general $\tau\in\partial\mathbb{D}$, conjugation by the rotation
$R_\tau(z)=\overline{\tau}z$ reduces the assertion to the case
$\tau=1$.

Finally, the condition is independent of the chosen conformal map
$\phi_\tau$. Indeed, if $\psi_\tau:\mathbb{D}\to\mathbb{H}$ is another
such map, then
\begin{equation*}
\psi_\tau\circ\phi_\tau^{-1}(w)=aw+b,
\quad a>0,\quad b\in\mathbb{R}.
\end{equation*}
Thus,
\begin{equation*}
\frac{h(z)}{\psi_\tau(z)}
=
\frac{h(z)/\phi_\tau(z)}
     {a+b/\phi_\tau(z)},
\end{equation*}
and since $\phi_\tau(z)\to\infty$ non--tangentially as
$z\to\tau$,
\begin{equation*}
\angle\lim_{z\to\tau}
\frac{h(z)}{\psi_\tau(z)}
=
\frac{L}{a}.
\end{equation*}
Hence, the existence of a finite nonzero angular limit is independent
of the choice of $\phi_\tau$.
\end{proof}

We continue with the following example illustrating the Koenigs
characterization for a classical linear fractional model.

%%%%%%%%%%%%%%%%%%%%%%%%%%%%%%%%%%%%
\begin{example}
\label{ex:lft_koenigs}

Fix $\alpha\in(0,1)$ and consider the linear fractional self-map \eqref{eq:sharpness_automorphism}
\begin{equation*}
g(z)
=
\frac{(1+\alpha)z+(1-\alpha)}
     {(1-\alpha)z+(1+\alpha)},
\quad z\in\mathbb{D}.
\end{equation*}
We recal that
\begin{equation*}
g(1)=1,
\quad
g'(1)=\alpha,
\end{equation*}
and $g$ is a hyperbolic self-map of $\mathbb{D}$ with Denjoy--Wolff
point $\tau=1$ and angular multiplier $\alpha$.

Let $\phi(z)$
be the standard Cayley transform from $\mathbb{D}$ onto $\mathbb{H}$ defined in \eqref{cayley tr}.
A direct calculation shows that
\begin{equation*}
\phi\circ g=\frac1\alpha\,\phi,
\end{equation*}
and hence, the conjugated map
\begin{equation*}
f=\phi\circ g\circ\phi^{-1}:\mathbb{H}\to\mathbb{H}
\end{equation*}
is the dilation
\begin{equation*}
f(w)=\frac{w}{\alpha}.
\end{equation*}
In particular,
\begin{equation*}
f^{\circ n}(w)=\frac{w}{\alpha^n}.
\end{equation*}
The Koenigs functional equation for $g$ is
\begin{equation*}
h\circ g=\alpha^{-1}h.
\end{equation*}
Consequently, the natural Koenigs function in this model is simply
\begin{equation*}
h(z)=\phi(z)=i\frac{1+z}{1-z}.
\end{equation*}
Indeed,
\begin{equation*}
h(g(z))
=
\phi(g(z))
=
f(\phi(z))
=
\frac{\phi(z)}{\alpha}
=
\alpha^{-1}h(z).
\end{equation*}
We now verify condition~\emph{(ii)} of
Theorem~\ref{thm:koenigs_criterion}. Since $h=\phi$, we may choose
the conformal reference map $\phi_\tau=\phi.$ Then
\begin{equation*}
\angle\lim_{z\to1}
\frac{h(z)}{\phi_\tau(z)}
=
\angle\lim_{z\to1}
\frac{\phi(z)}{\phi(z)}
=
1.
\end{equation*}
Thus, the angular limit exists and satisfies
\begin{equation*}
0<|L|=1<\infty.
\end{equation*}
Therefore, condition~\emph{(ii)} of
Theorem~\ref{thm:koenigs_criterion} holds, and hence, $g$ has an
extremal hyperbolic rate.

In fact, this example makes the metric mechanism particularly
transparent. For any fixed $w\in\mathbb{D}$ and $z_0\in\mathbb{D}$,
the iterates satisfy
\begin{equation*}
g^{\circ n}(z_0)
=
\phi^{-1}\!\left(\frac{\phi(z_0)}{\alpha^n}\right).
\end{equation*}
Since $\phi$ is a hyperbolic isometry,
\begin{equation*}
\rho_{\mathbb{D}}
\bigl(g^{\circ n}(z_0),w\bigr)
=
\rho_{\mathbb{H}}
\left(
\frac{\phi(z_0)}{\alpha^n},
\phi(w)
\right).
\end{equation*}
Moreover,
\begin{equation*}
\frac{h(z_n)}{\phi(z_n)}\equiv1,
\quad
z_n=g^{\circ n}(z_0),
\end{equation*}
so Lemma~\ref{lem:koenigs_metric} gives
\begin{equation*}
\rho_{\mathbb{D}}(z_n,w)
=
\log|h(z_n)|+O(1).
\end{equation*}
Using
\begin{equation*}
h(z_n)=\alpha^{-n}h(z_0),
\end{equation*}
we obtain
\begin{align*}
\rho_{\mathbb{D}}
\bigl(g^{\circ n}(z_0),w\bigr)
&=
\log\left|\alpha^{-n}h(z_0)\right|+O(1)
=
n\log\frac1\alpha+\log|h(z_0)|+O(1)\\
&=
n\log\frac1\alpha+O(1).
\end{align*}
Hence, the orbit realizes precisely the extremal linear growth rate.

Finally, the example also illustrates the independence of the
reference conformal map in Theorem~\ref{thm:koenigs_criterion}. If
$\widetilde{\phi}_\tau:\mathbb{D}\to\mathbb{H}$ is any other conformal
map with $\widetilde{\phi}_\tau(1)=\infty$, then
\begin{equation*}
\widetilde{\phi}_\tau(z)=a\phi(z)+b,
\quad a>0,\quad b\in\mathbb{R},
\end{equation*}
and therefore
\begin{equation*}
\angle\lim_{z\to1}
\frac{h(z)}{\widetilde{\phi}_\tau(z)}
=
\frac1a.
\end{equation*}
Thus, the angular limit remains finite and nonzero, as required.
\end{example}
%%%%%%%%%%%%%%%%%%%%%%%%%%%%%%%%%%%%

\subsection{Extension to simply connected hyperbolic domains}
\label{subsec:simply_connected}

The characterizations established in
Theorems~\ref{thm:herglotz_criterion} and
\ref{thm:koenigs_criterion} extend naturally from the unit disk to
arbitrary simply connected hyperbolic domains. The reason is that a
Riemann mapping is an exact isometry for the intrinsic hyperbolic
metrics. Consequently, both the extremal rate and its characterizations
can be transferred to the disk by conformal conjugation.

For a hyperbolic domain $\Omega$, we denote by
$\lambda_\Omega$ the density of the Poincar\'e metric of curvature $-1$
and by $\rho_\Omega$ the corresponding Poincar\'e distance, namely
\begin{equation*}
\rho_\Omega(z,w)
=
\inf_\gamma\int_\gamma
\lambda_\Omega(\xi)\,|d\xi|.
\end{equation*}

\begin{theorem}
\label{thm:simply_connected}
Let $\Omega\subsetneq\mathbb C$ be a simply connected proper domain,
and let $\rho_\Omega$ denote its Poincar\'e hyperbolic metric. Let $\psi:\Omega\to\mathbb{D}$ be a conformal map, and let
$g:\Omega\to\Omega$ be a holomorphic self-map with no fixed point in
$\Omega$. Define
\begin{equation}\label{g tilde}
\widetilde g
=
\psi\circ g\circ\psi^{-1}:\mathbb{D}\to\mathbb{D}.
\end{equation}

Suppose that $\widetilde g$ is hyperbolic, with Denjoy--Wolff point
$\tau\in\partial\mathbb{D}$ and angular multiplier
$\alpha\in(0,1)$.

Then the following conditions are equivalent:
\begin{enumerate}
\item[\emph{(i)}]
$g$ has an extremal hyperbolic rate in $\Omega$, that is, for some
(equivalently, every) $z,w\in\Omega$,
\begin{equation*}
\label{eq:omega_extremal_rate}
\rho_\Omega\bigl(g^{\circ n}(z),w\bigr)
=
n\log\frac1\alpha+O(1),
\quad (n\to\infty).
\end{equation*}

\item[\emph{(ii)}]
$\widetilde g$ has an extremal hyperbolic rate in $\mathbb{D}$.

\item[\emph{(iii)}]
The Herglotz measure $\sigma$ associated with $\widetilde g$ in
Theorem~\ref{thm:herglotz_criterion} satisfies
\begin{equation*}
\label{eq:omega_extremal_condition}
\int_{\partial\mathbb{D}\setminus\{\tau\}}
\log\frac1{|\zeta-\tau|}
\,d\sigma(\zeta)
<\infty.
\end{equation*}
\end{enumerate}

In particular, the extremal hyperbolic rate is intrinsic to
$\Omega$ and does not depend on the choice of the Riemann mapping
$\psi$.
\end{theorem}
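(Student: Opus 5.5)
The plan is to reduce everything to the disk through the conformal invariance of the Poincaré metric, and then to invoke Theorem~\ref{thm:herglotz_criterion}. The first step is the isometry property. Since $\psi:\Omega\to\mathbb{D}$ is a biholomorphism, the Poincaré density transforms by
\[
\lambda_\Omega(\xi)=\lambda_{\mathbb D}(\psi(\xi))\,|\psi'(\xi)|.
\]
This is the standard definition of $\lambda_\Omega$ via a universal covering, which here is $\psi^{-1}$. Hence for every curve $\gamma$ in $\Omega$ the $\lambda_\Omega$-length of $\gamma$ equals the $\lambda_{\mathbb D}$-length of $\psi\circ\gamma$. Taking infima gives
\[
\rho_\Omega(z,w)=\rho_{\mathbb D}(\psi(z),\psi(w)).
\]

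Next I transport the orbits. By induction from \eqref{g tilde} we have $\widetilde g^{\circ n}=\psi\circ g^{\circ n}\circ\psi^{-1}$. Therefore, for $z,w\in\Omega$,
\[
\rho_\Omega\bigl(g^{\circ n}(z),w\bigr)=\rho_{\mathbb D}\bigl(\widetilde g^{\circ n}(\psi(z)),\psi(w)\bigr),
\]
with the same $\alpha$ on both sides. So the quantity $\rho_\Omega(g^{\circ n}(z),w)-n\log(1/\alpha)$ coincides exactly with the corresponding disk quantity at the points $\psi(z),\psi(w)$. Because $\psi$ is a bijection, (i) for some or every pair $(z,w)$ is equivalent to (ii) for some or every pair in $\mathbb D$. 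The "some versus every" independence follows either from Definition~\ref{def:extremal_rate} in the disk, or directly in $\Omega$ by the same triangle-inequality and Schwarz--Pick argument, using that $g$ is $\rho_\Omega$-nonexpansive (again by conjugation). The equivalence (ii)$\Leftrightarrow$(iii) is then exactly Theorem~\ref{thm:herglotz_criterion} applied to $\widetilde g$, which is a hyperbolic self-map of $\mathbb D$ by hypothesis.

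For the final assertion, let $\psi_1,\psi_2$ be two Riemann maps. Then $T=\psi_2\circ\psi_1^{-1}\in\operatorname{Aut}(\mathbb D)$, and $\widetilde g_2=T\circ\widetilde g_1\circ T^{-1}$. I need to check that the hypotheses and conclusion are invariant. The Denjoy--Wolff point moves to $T(\tau)$. The multiplier is unchanged: applying the chain rule for angular derivatives to $T\circ \widetilde g_1\circ T^{-1}$ at $T(\tau)$, the factors $T'(\tau)$ and $(T^{-1})'(T(\tau))$ cancel since $T$ is conformal on $\overline{\mathbb D}$. Equivalently, one can use the intrinsic formula $\lim \rho(\widetilde g^{\circ n}(z),w)/n=\log(1/\alpha)$ from \eqref{eq:asymptotic_hyperbolic_rate}, which is isometry invariant. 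Condition (i) is phrased purely in terms of $\rho_\Omega$ and $\alpha$, so it does not depend on $\psi$. Consequently (ii) and (iii) hold for $\psi_1$ if and only if they hold for $\psi_2$.

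The one delicate point is the invariance of $\alpha$. I expect to handle it most cleanly through the intrinsic slope formula, since this avoids any boundary chain-rule technicalities. The measure $\sigma$ itself changes under $T$, but the equivalence is routed through (i), so no direct comparison of the measures is needed.
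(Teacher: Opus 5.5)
Your proposal is correct and follows essentially the same route as the paper: the Riemann map is a hyperbolic isometry that conjugates the orbits, which gives (i)$\Leftrightarrow$(ii); Theorem~\ref{thm:herglotz_criterion} applied to $\widetilde g$ gives (ii)$\Leftrightarrow$(iii); and independence of $\psi$ follows by conjugating with the automorphism $T=\psi_2\circ\psi_1^{-1}$. You also justify that the multiplier $\alpha$ is unchanged under $T$, via the chain rule or the intrinsic slope formula \eqref{eq:asymptotic_hyperbolic_rate}, whereas the paper only asserts this.
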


\begin{proof}
Since $\psi:\Omega\to\mathbb{D}$ is a conformal equivalence, it is an
isometry between the intrinsic hyperbolic metrics:
\begin{equation*}
\label{eq:riemann_hyperbolic_isometry}
\rho_\Omega(z,w)
=
\rho_{\mathbb{D}}\bigl(\psi(z),\psi(w)\bigr),
\quad z,w\in\Omega.
\end{equation*}
Moreover,
\begin{equation*}
\psi\circ g^{\circ n}
=
\widetilde g^{\circ n}\circ\psi,
\end{equation*}
where $\widetilde g$ defined in \eqref{g tilde}.
Hence, for every $z,w\in\Omega$,
\begin{equation*}
\label{eq:orbit_conjugacy}
\rho_\Omega\bigl(g^{\circ n}(z),w\bigr)
=
\rho_{\mathbb{D}}
\bigl(\widetilde g^{\circ n}(\psi(z)),\psi(w)\bigr).
\end{equation*}
Therefore,
\begin{equation*}
\rho_\Omega\bigl(g^{\circ n}(z),w\bigr)
=
n\log\frac1\alpha+O(1)
\end{equation*}
if and only if
\begin{equation*}
\rho_{\mathbb{D}}
\bigl(\widetilde g^{\circ n}(\psi(z)),\psi(w)\bigr)
=
n\log\frac1\alpha+O(1).
\end{equation*}
Thus, \emph{(i)} and \emph{(ii)} are equivalent.

By Theorem~\ref{thm:herglotz_criterion}, condition~\emph{(ii)} is
equivalent to the logarithmic integrability condition
\eqref{eq:omega_extremal_condition} for the Herglotz measure
associated with $\widetilde g$. Hence, \emph{(ii)} and \emph{(iii)} are
equivalent, and therefore all three conditions are equivalent.

It remains to verify that the formulation is independent of the choice of the Riemann mapping. Let
$\widehat\psi:\Omega\to\mathbb{D}$ be another Riemann mapping and set
\begin{equation*}
T=\widehat\psi\circ\psi^{-1}:\mathbb{D}\to\mathbb{D}.
\end{equation*}
Then $T$ is a conformal automorphism of $\mathbb{D}$ and
\begin{equation*}
\widehat g
=
\widehat\psi\circ g\circ\widehat\psi^{-1}
=
T\circ\widetilde g\circ T^{-1}.
\end{equation*}
Thus, $\widehat g$ is conformally conjugate to $\widetilde g$. Since
every automorphism of $\mathbb{D}$ is a hyperbolic isometry,
\begin{equation*}
\rho_{\mathbb{D}}(T(\xi),T(\eta))
=
\rho_{\mathbb{D}}(\xi,\eta),
\quad \xi,\eta\in\mathbb{D}.
\end{equation*}
Consequently, $\widetilde g$ has an extremal hyperbolic rate if and
only if $\widehat g$ does. Moreover, their Denjoy--Wolff points and
angular multipliers correspond under $T$, and the multiplier is
unchanged.

Applying Theorem~\ref{thm:herglotz_criterion} to each conjugate map
shows that the corresponding logarithmic Herglotz condition holds
for one choice of the Riemann mapping if and only if it holds for any
other choice. Hence, the criterion, although expressed in terms of the
disk and its Herglotz measure, characterizes an intrinsic property of
the original dynamical system $(\Omega,g)$.
\end{proof}

\subsection{Extension to multiply connected domains}
\label{subsec:multiply_connected}

When $\Omega$ is multiply connected, the Riemann mapping theorem is no
longer available, and one cannot conjugate a self-map of $\Omega$ directly
to a self-map of the unit disk. The natural substitute is the universal
covering map
\begin{equation*}
\pi:\mathbb D\to\Omega,
\end{equation*}
which replaces conformal conjugation by a covering-space lift. We use the
standard correspondence between the hyperbolic metric of $\Omega$ and the
Poincar\'e metric of its universal covering disk; see, for example,
\cite[Sections 1.6, 1.7, and 3.3]{Abate19}.

Let $\Gamma$ denote the deck transformation group of $\pi$:
\begin{equation*}
\label{eq:deck_group}
\Gamma
:=
\{\gamma\in\operatorname{Aut}(\mathbb D):
\pi\circ\gamma=\pi\}.
\end{equation*}
The group $\Gamma$ acts freely and properly discontinuously on $\mathbb D$,
and $\Omega$ is conformally equivalent to the quotient
$\mathbb D/\Gamma$.

We recall that the limit set of $\Gamma$ is defined by
\begin{equation*}
\label{eq:limit_set}
\Lambda(\Gamma)
:=
\overline{\Gamma(z)}\cap\partial\mathbb D,
\quad z\in\mathbb D,
\end{equation*}
where the closure is taken in $\overline{\mathbb D}$. The resulting set is
independent of the choice of $z\in\mathbb D$. A point
$\tau\in\partial\mathbb D$ is called an \emph{ordinary point} of $\Gamma$
if
\begin{equation*}
\tau\notin\Lambda(\Gamma).
\end{equation*}
In particular, an ordinary point has a neighborhood on which the covering
map is locally one-to-one in a way that is uniform with respect to the
deck transformations.

The following lemma is the geometric ingredient needed to transfer the
extremal rate from the universal covering disk to the quotient domain.

\begin{lemma}
\label{lem:covering_distance}
Let $\pi:\mathbb D\to\Omega$ be a universal covering of a hyperbolic
domain $\Omega$, with deck transformation group $\Gamma$. Let
$\tau\in\partial\mathbb D\setminus\Lambda(\Gamma)$, and let
$\{z_n\}\subset\mathbb D$ converge to $\tau$. Suppose that
$z_n\to\tau$ non-tangentially. Fix $w\in\mathbb D$. Then there exists a
constant $C=C(w,\tau,\Gamma)>0$ such that
\begin{equation}
\label{eq:covering_distance_comparison}
\left|
\rho_\Omega\bigl(\pi(z_n),\pi(w)\bigr)
-
\rho_{\mathbb D}(z_n,w)
\right|
\le C
\end{equation}
for all sufficiently large $n$.
\end{lemma}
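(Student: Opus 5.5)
The plan is to compare the two distances through the standard formula for the quotient metric of a covering, and then control the deck-group terms with the exact error identity from the proof of Lemma~\ref{lem:boundary_asymp}.

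\textbf{Step 1: reduction to a lower bound uniform in $\gamma$.} Since $\pi$ is a holomorphic covering and a local isometry for the Poincar\'e metrics,
\begin{equation*}
\rho_\Omega\bigl(\pi(z),\pi(w)\bigr)=\inf_{\gamma\in\Gamma}\rho_{\mathbb D}\bigl(z,\gamma(w)\bigr),\quad z,w\in\mathbb D .
\end{equation*}
Taking $\gamma=\mathrm{id}$ gives the upper bound $\rho_\Omega(\pi(z_n),\pi(w))\le\rho_{\mathbb D}(z_n,w)$. So it suffices to find a constant $C$ such that
\begin{equation*}
\rho_{\mathbb D}(z_n,\gamma(w))\ge\rho_{\mathbb D}(z_n,w)-C
\end{equation*}
for all $\gamma\in\Gamma$ and all large $n$.

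\textbf{Step 2: a uniform separation constant.} The closure of the orbit $\Gamma(w)$ in $\overline{\mathbb D}$ is $\Gamma(w)\cup\Lambda(\Gamma)$. This holds because the action is properly discontinuous, so the orbit accumulates only on the boundary, and those accumulation points form $\Lambda(\Gamma)$ by definition. The closure is compact and does not contain $\tau$, since $\tau\notin\Lambda(\Gamma)$ and $\tau\notin\mathbb D$. Hence
\begin{equation*}
\delta:=\inf_{\gamma\in\Gamma}|\tau-\gamma(w)|>0 .
\end{equation*}
I expect this to be the only genuinely delicate point. It is exactly where ordinarity of $\tau$ enters, and it is why the constant depends on $\Gamma$ and $\tau$.

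\textbf{Step 3: comparison via the exact error identity.} By \eqref{eq:exact_error}, for any $u\in\mathbb D$,
\begin{equation*}
\rho_{\mathbb D}(z_n,u)=\log\frac{1}{1-|z_n|^2}+\log\frac{|1-\bar u z_n|^2}{1-|u|^2}+2\log\bigl(1+|\phi_u(z_n)|\bigr).
\end{equation*}
Apply this with $u=\gamma(w)$.

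For $n$ large we have $|z_n-\tau|<\delta/2$. Then
\begin{equation*}
|1-\overline{\gamma(w)}z_n|\ge|\tau-\gamma(w)|-|z_n-\tau|\ge\delta/2,
\end{equation*}
using $|1-\bar u z|=|\bar z-\bar u|$-type estimates up to the unimodular factor $\tau$. Also $1-|\gamma(w)|^2\le1$ and $2\log(1+|\phi_u|)\ge0$. Together these give
\begin{equation*}
\rho_{\mathbb D}(z_n,\gamma(w))\ge\log\frac{1}{1-|z_n|^2}+2\log\frac{\delta}{2}
\end{equation*}
for every $\gamma\in\Gamma$.

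On the other hand, Lemma~\ref{lem:boundary_asymp} gives
\begin{equation*}
\rho_{\mathbb D}(z_n,w)\le\log\frac{1}{1-|z_n|^2}+\log\frac{4(1+|w|)}{1-|w|}.
\end{equation*}
Subtracting the two estimates yields Step 1 with
\begin{equation*}
C=\log\frac{4(1+|w|)}{1-|w|}+2\log\frac{2}{\delta}.
\end{equation*}
Both inequalities together give \eqref{eq:covering_distance_comparison}.

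Non-tangential approach is not actually used; only $z_n\to\tau$ is needed. I would note this as a remark.
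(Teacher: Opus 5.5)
Your proof is correct, but for the key lower bound it takes a different route from the paper. Both arguments start from $\rho_\Omega(\pi(z),\pi(w))=\inf_{\gamma\in\Gamma}\rho_{\mathbb D}(z,\gamma(w))$ and take $\gamma=\operatorname{id}$ for the upper bound.

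For the reverse inequality the paper uses a crossing argument. It picks a neighborhood $U$ of $\tau$ that contains no nontrivial translate $\gamma(w)$. It notes that any curve from $z_n$ to $\gamma(w)$ meets $K=\partial U\cap\mathbb D$. It then bounds $\rho_{\mathbb D}(z_n,K)$ below by $\rho_{\mathbb D}(z_n,q)$ for a single fixed $q\in K$. As written this step fails, for two reasons:
the distance to a set is bounded \emph{above}, not below, by the distance to one of its points; and $K$ is not compact in $\mathbb D$ (for a Euclidean disk about $\tau$ it is an arc ending on $\partial\mathbb D$). So $\rho_{\mathbb D}(w,k)$ is unbounded over $k\in K$, and the crossing point is not controlled.

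Your argument avoids this. Ordinarity enters only through the Euclidean separation $\delta=\inf_{\gamma}|\tau-\gamma(w)|>0$, which is immediate from the definition $\Lambda(\Gamma)=\overline{\Gamma(w)}\cap\partial\mathbb D$. Then \eqref{eq:exact_error} with $u=\gamma(w)$, together with $|1-\overline{u}\tau|=|\tau-u|$, gives $\rho_{\mathbb D}(z_n,\gamma(w))\ge\log\frac{1}{1-|z_n|^2}-2\log\frac{2}{\delta}$ uniformly in $\gamma$. The global upper bound of Lemma~\ref{lem:boundary_asymp} turns this into the claim with an explicit constant.

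What each route buys: yours is a complete proof with explicit $C$. It needs neither $U\cap\gamma(U)=\varnothing$ nor simple connectivity of $U\cap\mathbb D$, and it correctly shows that non-tangential approach is superfluous (the paper's argument never uses it either). The paper's crossing idea is more coordinate-free, but to be completed it would need an actual estimate on where geodesics from $z_n$ to $\gamma(w)$ cross $\partial U$.
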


\begin{proof}
Since $\tau\notin\Lambda(\Gamma)$, there exists a Euclidean neighborhood
$U$ of $\tau$ in $\overline{\mathbb D}$ such that
\begin{equation*}
\label{eq:U_disjoint_translates}
U\cap\gamma(U)=\varnothing
\quad
\text{for every }\gamma\in\Gamma\setminus\{\operatorname{id}\}.
\end{equation*}
After shrinking $U$, we may also assume that $U\cap\mathbb D$ is
simply connected. Since $z_n\to\tau$, we have
$z_n\in U\cap\mathbb D$ for all sufficiently large $n$.

For $z_1,z_2\in\mathbb D$, the hyperbolic distance in the quotient is
given by
\begin{equation*}
\label{eq:quotient_distance}
\rho_\Omega\bigl(\pi(z_1),\pi(z_2)\bigr)
=
\inf_{\gamma\in\Gamma}
\rho_{\mathbb D}\bigl(z_1,\gamma(z_2)\bigr).
\end{equation*}
Consequently,
\begin{equation}
\label{eq:quotient_upper}
\rho_\Omega\bigl(\pi(z_n),\pi(w)\bigr)
\le
\rho_{\mathbb D}(z_n,w).
\end{equation}

It remains to obtain the reverse inequality up to a constant. Since
$\tau\notin\Lambda(\Gamma)$, the orbit $\Gamma w$ does not accumulate at
$\tau$. Thus, after shrinking $U$ if necessary,
\begin{equation*}
\label{eq:orbit_avoids_U}
\gamma(w)\notin U
\quad
\text{for every }\gamma\in\Gamma\setminus\{\operatorname{id}\}.
\end{equation*}
Let
\begin{equation*}
K:=\partial U\cap\mathbb D.
\end{equation*}
The set $K$ is compact in $\mathbb D$. Choose a point $q\in K$. For every
nontrivial $\gamma\in\Gamma$, every curve joining $z_n\in U$ to
$\gamma(w)\notin U$ must meet $K$. Hence,
\begin{equation*}
\rho_{\mathbb D}\bigl(z_n,\gamma(w)\bigr)
\ge
\rho_{\mathbb D}(z_n,K)
\ge
\rho_{\mathbb D}(z_n,q).
\end{equation*}
By the triangle inequality,
\begin{equation*}
\rho_{\mathbb D}(z_n,q)
\ge
\rho_{\mathbb D}(z_n,w)-\rho_{\mathbb D}(w,q).
\end{equation*}
Therefore,
\begin{equation}
\label{eq:nontrivial_lower}
\rho_{\mathbb D}\bigl(z_n,\gamma(w)\bigr)
\ge
\rho_{\mathbb D}(z_n,w)-\rho_{\mathbb D}(w,q)
\end{equation}
for every $\gamma\in\Gamma\setminus\{\operatorname{id}\}$ and all
sufficiently large $n$. Taking the infimum over $\gamma\in\Gamma$ and
combining \eqref{eq:quotient_upper} and \eqref{eq:nontrivial_lower} gives
\begin{equation*}
\rho_{\mathbb D}(z_n,w)-\rho_{\mathbb D}(w,q)
\le
\rho_\Omega\bigl(\pi(z_n),\pi(w)\bigr)
\le
\rho_{\mathbb D}(z_n,w),
\end{equation*}
which proves \eqref{eq:covering_distance_comparison}.
\end{proof}

The preceding lemma immediately yields the following extension of the
extremal-rate characterization.

\begin{theorem}
\label{thm:multiply_connected}
Let $\Omega\subsetneq\mathbb C$ be a hyperbolic domain,
$\pi:\mathbb D\to\Omega$ be a universal covering map, and
$g:\Omega\to\Omega$ be a holomorphic self-map without fixed points in
$\Omega$. Let
$\widetilde g:\mathbb D\to\mathbb D$
be a holomorphic lift satisfying
\begin{equation}
\label{eq:lift_relation}
\pi\circ\widetilde g=g\circ\pi.
\end{equation}
Assume that $\widetilde g$ is hyperbolic, with Denjoy--Wolff point
$\tau\in\partial\mathbb D$ and angular multiplier
$\alpha\in(0,1)$, and that
$\tau\notin\Lambda(\Gamma).$
Suppose, for some $z\in\mathbb D$, that
\begin{equation*}
\widetilde g^{\circ n}(z)\longrightarrow\tau
\end{equation*}
non-tangentially as $n\to\infty$. Then $g$ has an extremal hyperbolic
rate in $\Omega$ if and only if $\widetilde g$ has an extremal
hyperbolic rate in $\mathbb D$. Equivalently, $g$ has an extremal
hyperbolic rate if and only if the Herglotz measure $\sigma$ associated
with $\widetilde g$ satisfies
\begin{equation}
\label{eq:multiply_extremal_condition}
\int_{\partial\mathbb D\setminus\{\tau\}}
\log\frac{1}{|\zeta-\tau|}
\,d\sigma(\zeta)
<+\infty.
\end{equation}
\end{theorem}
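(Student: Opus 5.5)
The plan is to compare the two hyperbolic distances along the lifted orbit and then invoke Theorem~\ref{thm:herglotz_criterion}. Fix $z\in\mathbb D$ with $z_n:=\widetilde g^{\circ n}(z)\to\tau$ non-tangentially, and a base point $w\in\mathbb D$. Iterating the lift relation \eqref{eq:lift_relation} gives $\pi\circ\widetilde g^{\circ n}=g^{\circ n}\circ\pi$. Hence
\begin{equation*}
g^{\circ n}(\pi(z))=\pi(z_n),\qquad
\rho_\Omega\bigl(g^{\circ n}(\pi(z)),\pi(w)\bigr)=\rho_\Omega\bigl(\pi(z_n),\pi(w)\bigr).
\end{equation*}
Since $\tau\notin\Lambda(\Gamma)$ and the approach is non-tangential, Lemma~\ref{lem:covering_distance} applies to the sequence $\{z_n\}$. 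It yields a constant $C=C(w,\tau,\Gamma)$ with
\begin{equation*}
\bigl|\rho_\Omega(g^{\circ n}(\pi(z)),\pi(w))-\rho_{\mathbb D}(z_n,w)\bigr|\le C
\end{equation*}
for all large $n$.

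Subtracting $n\log(1/\alpha)$ from both distances, one deviation is bounded exactly when the other is. So, along the orbit of $\pi(z)$ with base point $\pi(w)$, the statement $\rho_\Omega(g^{\circ n}(\pi(z)),\pi(w))=n\log\frac1\alpha+O(1)$ is equivalent to \eqref{eq:extremal_rate} for $\widetilde g$ at the points $z,w$.

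Next I would remove the dependence on the particular points. On the disk side, Definition~\ref{def:extremal_rate} already shows that extremality is independent of $z$ and $w$. On the $\Omega$ side, the same two arguments work with $\rho_\Omega$. The triangle inequality handles a change of base point. The Schwarz--Pick contraction of $\rho_\Omega$ under the holomorphic self-map $g$ handles a change of initial point, since $\rho_\Omega(g^{\circ n}(p),g^{\circ n}(p'))\le\rho_\Omega(p,p')$. Because $\pi$ is surjective, every pair of points in $\Omega$ is of the form $\pi(z'),\pi(w')$. Therefore the $O(1)$ property for the single pair $(\pi(z),\pi(w))$ is equivalent to extremality of $g$ in $\Omega$, meaning bounded deviation for some, equivalently every, pair of points. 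Combining this with the previous paragraph proves the first equivalence.

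The second equivalence, with the log-moment condition \eqref{eq:multiply_extremal_condition}, follows directly from Theorem~\ref{thm:herglotz_criterion} applied to the hyperbolic self-map $\widetilde g$ of $\mathbb D$.

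The only delicate point is the hypothesis of Lemma~\ref{lem:covering_distance}: the constant there is guaranteed only along a non-tangentially convergent sequence and only for large $n$. That is why I use the one orbit assumed to converge non-tangentially, rather than an arbitrary orbit, and let the point-independence argument transfer the conclusion to all orbits. Discarding finitely many $n$ is harmless for an $O(1)$ statement. The non-tangential hypothesis is in fact automatic for hyperbolic maps, but I will simply use it as assumed.
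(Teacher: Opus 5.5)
Your proposal is correct and follows essentially the same route as the paper. You apply Lemma~\ref{lem:covering_distance} to the non-tangentially convergent lifted orbit $z_n=\widetilde g^{\circ n}(z)$, transfer bounded deviation in both directions via $\pi(z_n)=g^{\circ n}(\pi(z))$ together with point-independence (triangle inequality and Schwarz--Pick for $\rho_\Omega$), and then invoke Theorem~\ref{thm:herglotz_criterion} for the log-moment condition. Your explicit check of point-independence on the $\Omega$ side is just a more careful version of the single sentence the paper devotes to it.
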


\begin{proof}
Set
\begin{equation*}
z_n:=\widetilde g^{\circ n}(z).
\end{equation*}
Then, by the lifting relation \eqref{eq:lift_relation},
\begin{equation*}
\pi(z_n)=g^{\circ n}(\pi(z)).
\end{equation*}
Fix $w\in\mathbb D$ and put $w_0:=\pi(w)$. Since
$z_n\to\tau$ non-tangentially and $\tau\notin\Lambda(\Gamma)$,
Lemma~\ref{lem:covering_distance} gives a constant $C>0$ such that
\begin{equation}
\label{eq:orbit_distance_comparison}
\left|
\rho_\Omega\bigl(g^{\circ n}(\pi(z)),w_0\bigr)
-
\rho_{\mathbb D}(z_n,w)
\right|
\le C
\end{equation}
for all sufficiently large $n$.

Suppose first that $\widetilde g$ has an extremal hyperbolic rate. Then
\begin{equation*}
\rho_{\mathbb D}(z_n,w)
=
n\log\frac1\alpha+O(1).
\end{equation*}
It follows immediately from \eqref{eq:orbit_distance_comparison} that
\begin{equation*}
\rho_\Omega\bigl(g^{\circ n}(\pi(z)),w_0\bigr)
=
n\log\frac1\alpha+O(1).
\end{equation*}
The definition of extremal rate is independent of the initial point and
the base point, since the triangle inequality and the Schwarz--Pick
inequality give uniform-in-$n$ bounds when either point is changed.
Hence, $g$ has an extremal hyperbolic rate in $\Omega$.

Conversely, suppose that $g$ has an extremal hyperbolic rate. Then
\begin{equation*}
\rho_\Omega\bigl(g^{\circ n}(\pi(z)),w_0\bigr)
=
n\log\frac1\alpha+O(1).
\end{equation*}
Again by \eqref{eq:orbit_distance_comparison},
\begin{equation*}
\rho_{\mathbb D}(z_n,w)
=
n\log\frac1\alpha+O(1),
\end{equation*}
and therefore $\widetilde g$ has an extremal hyperbolic rate in
$\mathbb D$.

Finally, Theorem~\ref{thm:herglotz_criterion}, applied to the hyperbolic
lift $\widetilde g$, shows that its extremal hyperbolic rate is equivalent
to the logarithmic integrability condition
\eqref{eq:multiply_extremal_condition}. This proves the theorem.
\end{proof}

\begin{remark}
The ordinary-point assumption
$\tau\notin\Lambda(\Gamma)$ is essential for the preceding argument. It
ensures that the orbit approaching $\tau$ eventually lies in a region
where the different lifts of a fixed point remain uniformly separated
from the orbit. Consequently, passing from the covering disk to the
quotient changes the relevant hyperbolic distance by at most a bounded
additive amount. This is exactly the level of control required for the
extremal-rate condition.
\end{remark}

%-----------------------------------------------------
\section{Higher-Dimensional and Quasiconformal Settings}
\label{sec:higher}
%%=============================================================

The preceding sections established the extremal-rate theory for
holomorphic self-maps of hyperbolic domains in the complex plane. In the
simply connected case, the theory can be reduced to the unit disk, while
in the multiply connected case the universal covering map provides the
appropriate replacement. In one complex dimension, the Herglotz--Nevanlinna
representation additionally gives an explicit measure-theoretic
characterization of extremality.

We now consider two higher-dimensional and less rigid settings. First, we
study holomorphic self-maps of the unit ball
$\mathbb B^n\subset\mathbb C^n$. Second, we consider
$K$-quasiconformal self-maps of $\mathbb B^n$. In these settings there is
no direct analogue of the one-dimensional Herglotz representation with
the same explicit boundary measure, and we therefore work directly with
the intrinsic hyperbolic geometry.

The basic mechanism remains unchanged. The relevant boundary
linearization determines the exponential scale of the orbit, while the
hyperbolic metric converts this boundary scale into the asymptotic
distance
\begin{equation*}
n\log\frac1\alpha+O(1).
\end{equation*}
For the unit ball, the appropriate boundary geometry is described by
Kor\'anyi regions, and the role of the one-dimensional Koenigs
linearization is played by a Valiron linearizer.

\subsection{The unit ball}
\label{subsec:ball}

We use the standard Hermitian inner product
\begin{equation*}
\langle z,w\rangle
=
\sum_{j=1}^n z_j\overline{w_j}
\end{equation*}
on $\mathbb C^n$. For $\tau\in\partial\mathbb B^n$ and $M>1$, define the
Kor\'anyi region
\begin{equation}
\label{eq:koranyi_region}
K(\tau,M)
=
\left\{
z\in\mathbb B^n:
|1-\langle z,\tau\rangle|
<
M(1-|z|)
\right\}.
\end{equation}
Since
\begin{equation*}
1-|z|
\le
|1-\langle z,\tau\rangle|,
\end{equation*}
the condition $M>1$ guarantees that these regions are nonempty. Up to a
change in the constant $M$, \eqref{eq:koranyi_region} is equivalent to
the more customary formulation
\begin{equation*}
|1-\langle z,\tau\rangle|
<
M(1-|z|^2).
\end{equation*}
These regions describe the standard admissible, or Kor\'anyi,
non-tangential approach to $\tau$; see, for example,
\cite{Ko} and \cite[p.~170]{Rudin}.

We first record the boundary asymptotic of the hyperbolic metric that
will be used below.

\begin{lemma}
\label{lem:ball_asymp}
Let $\tau\in\partial\mathbb B^n$ and let $w\in\mathbb B^n$ be fixed.
For every $M>1$, there exists a constant
$C=C(M,w)>0$ such that
\begin{equation}
\label{eq:ball_asymp}
\left|
\rho_{\mathbb B^n}(z,w)
-
\log\frac{1}{|1-\langle z,\tau\rangle|}
\right|
\le C
\end{equation}
for all $z\in K(\tau,M)$ sufficiently close to $\tau$. In particular,
\begin{equation}
\label{eq:ball_asymp_o1}
\rho_{\mathbb B^n}(z,w)
=
\log\frac{1}{|1-\langle z,\tau\rangle|}
+
O(1)
\quad
(z\to\tau,\ z\in K(\tau,M)).
\end{equation}
\end{lemma}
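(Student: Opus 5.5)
The plan is to use the closed formula \eqref{hyperb-unit ball} together with the standard identity
\begin{equation*}
1-|\varphi_w(z)|^2=\frac{(1-|z|^2)(1-|w|^2)}{|1-\langle z,w\rangle|^2},
\end{equation*}
where $\varphi_w$ is the ball automorphism exchanging $w$ and $0$. Since $\rho_{\mathbb B^n}(z,w)=\log\frac{1+|\varphi_w(z)|}{1-|\varphi_w(z)|}$, the argument of Lemma~\ref{lem:boundary_asymp} repeats verbatim. It gives the global estimate
\begin{equation*}
\rho_{\mathbb B^n}(z,w)=\log\frac{1}{1-|z|^2}+\log\frac{|1-\langle z,w\rangle|^2}{1-|w|^2}+2\log\bigl(1+|\varphi_w(z)|\bigr).
\end{equation*}
Using $1-|w|\le|1-\langle z,w\rangle|\le 1+|w|$ and $1\le 1+|\varphi_w(z)|\le 2$, this yields
\begin{equation*}
\left|\rho_{\mathbb B^n}(z,w)-\log\frac{1}{1-|z|^2}\right|\le\log\frac{4(1+|w|)}{1-|w|}
\end{equation*}
for all $z\in\mathbb B^n$. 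If one prefers to avoid the automorphism, the same bound can be obtained directly from \eqref{hyperb-unit ball}. Write $s=\sinh(\rho/2)$, so that $\rho=2\log(s+\sqrt{1+s^2})$ lies between $\log(4s^2)$ and $\log(4s^2)+O(1/s^2)$. Then insert $|z-w|\in[\,1-|w|-\ldots]$; but near $\tau$ one has $|z-w|\to|\tau-w|\ge 1-|w|>0$, which also gives the required two-sided control.

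The second step is to compare $1-|z|^2$ with $|1-\langle z,\tau\rangle|$ inside the Kor\'anyi region. By Cauchy--Schwarz, $|1-\langle z,\tau\rangle|\ge 1-|\langle z,\tau\rangle|\ge 1-|z|$. On $K(\tau,M)$, by definition, $|1-\langle z,\tau\rangle|<M(1-|z|)$. Since $1-|z|\le 1-|z|^2\le 2(1-|z|)$, we obtain
\begin{equation*}
\frac{1}{2}(1-|z|^2)\le|1-\langle z,\tau\rangle|\le M(1-|z|^2).
\end{equation*}
Taking logarithms, $\bigl|\log\frac{1}{1-|z|^2}-\log\frac{1}{|1-\langle z,\tau\rangle|}\bigr|\le\log\max(2,M)$.

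Combining the two steps with the triangle inequality gives \eqref{eq:ball_asymp} with
\begin{equation*}
C(M,w)=\log\frac{4(1+|w|)}{1-|w|}+\log\max(2,M).
\end{equation*}
This constant is in fact valid on all of $K(\tau,M)$, so the restriction ``sufficiently close to $\tau$'' is harmless, and \eqref{eq:ball_asymp_o1} follows.

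The only genuine point to check is the ball analogue of the identity for $1-|\varphi_w(z)|^2$, which is standard (Rudin, Theorem~2.2.2). Equivalently, one can verify the equivalence of \eqref{hyperb-unit ball} with the $\varphi_w$-formula. If one wants to stay strictly within what the paper has stated, I would use the $\sinh$ route: $4s^2=\frac{4|z-w|^2}{(1-|z|^2)(1-|w|^2)}$, with $|z-w|$ bounded between $1-|w|$ and $2$, and $\rho-\log(4s^2)\in[0,\log 2]$ once $s\ge1$. Since $s\to\infty$ as $z\to\tau$, this holds for all $z$ near $\tau$, which explains the phrasing ``sufficiently close to $\tau$''.
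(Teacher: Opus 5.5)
Your proof is correct if you use the $\sinh$ route for the first step. Your comparison $\frac12(1-|z|^2)\le|1-\langle z,\tau\rangle|\le M(1-|z|^2)$ on $K(\tau,M)$ is the same as the paper's.

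\textbf{Where you differ from the paper.} The first step is done differently. The paper uses no two-point formula. It computes $\rho_{\mathbb B^n}(z,0)=\log\frac{1}{1-|z|^2}+2\log(1+|z|)$ and then moves the base point with the triangle inequality, $|\rho_{\mathbb B^n}(z,w)-\rho_{\mathbb B^n}(z,0)|\le\rho_{\mathbb B^n}(0,w)=\log\frac{1+|w|}{1-|w|}$. This gives your global constant $\log\frac{4(1+|w|)}{1-|w|}$ in two lines, valid on all of $\mathbb B^n$.

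Your $\sinh$ expansion uses the full formula \eqref{hyperb-unit ball}. It needs $s\ge1$ and a lower bound on $|z-w|$. Note that $|z-w|\ge1-|w|$ is not literally true inside the ball; use instead $|z-w|\ge\frac12(1-|w|)$ for $|z|\ge\frac{1+|w|}{2}$. In exchange, your expansion is asymptotically exact. It yields the limit $\log\frac{4|\tau-w|^2}{1-|w|^2}$ of the error, which is the ball analogue of \eqref{eq:exact_limit}.

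\textbf{The automorphism route is wrong as stated.} The paper's $\rho_{\mathbb B^n}$ is the conformal metric $2|d\xi|/(1-|\xi|^2)$, i.e.\ the real hyperbolic metric of the ball in $\mathbb R^{2n}$ given by \eqref{hyperb-unit ball}. For $n\ge2$ it is \emph{not} equal to $\log\frac{1+|\varphi_w(z)|}{1-|\varphi_w(z)|}$ with Rudin's holomorphic automorphism $\varphi_w$; that expression is twice the Kobayashi distance of the complex ball. For example, take $n=2$, $w=(0,\frac12)$, $z=(\frac12,0)$. Then \eqref{hyperb-unit ball} gives $\tanh^2\bigl(\rho_{\mathbb B^n}(z,w)/2\bigr)=8/17$, whereas $|\varphi_w(z)|^2=7/16$. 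So the ``equivalence'' you propose to verify is false.

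The correct exact decomposition follows directly from \eqref{hyperb-unit ball} via $\cosh^2\frac{\rho}{2}=1+\sinh^2\frac{\rho}{2}$ and $\rho=2\log\cosh\frac{\rho}{2}+2\log\bigl(1+\tanh\frac{\rho}{2}\bigr)$:
\begin{equation*}
\rho_{\mathbb B^n}(z,w)=\log\frac{1}{1-|z|^2}+\log\frac{[z,w]^2}{1-|w|^2}+2\log\Bigl(1+\tanh\frac{\rho_{\mathbb B^n}(z,w)}{2}\Bigr).
\end{equation*}
Here $[z,w]^2:=|z-w|^2+(1-|z|^2)(1-|w|^2)=1-2\operatorname{Re}\langle z,w\rangle+|z|^2|w|^2$ replaces $|1-\langle z,w\rangle|^2$. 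Since $(1-|w|)^2\le(1-|z||w|)^2\le[z,w]^2\le(1+|z||w|)^2\le(1+|w|)^2$, your global constant survives unchanged. The conclusion is therefore unaffected; only this justification needs the replacement.
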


\begin{proof}
By a unitary transformation, we may assume that
$\tau=e_1=(1,0,\ldots,0)$. Such a transformation is an isometry of the
hyperbolic metric, so it does not affect the assertion.

With the curvature normalization $-1$ used throughout the paper, the
hyperbolic distance from the origin is given by (see \eqref{hyperb-unit ball} with $w=0$)
\begin{equation*}
\label{eq:ball_origin_distance}
\sinh^2
\frac{\rho_{\mathbb B^n}(z,0)}{2}
=
\frac{|z|^2}{1-|z|^2}.
\end{equation*}
Consequently,
\begin{align*}
\rho_{\mathbb B^n}(z,0)
&=
2\operatorname{arsinh}
\frac{|z|}{\sqrt{1-|z|^2}}
=
2\log
\left(
\frac{1+|z|}{\sqrt{1-|z|^2}}
\right)
\\
&=
\log\frac{1}{1-|z|^2}
+
2\log(1+|z|).
\label{eq:ball_origin_asymp}
\end{align*}
Hence,
\begin{equation*}
\label{eq:ball_origin_o1}
\rho_{\mathbb B^n}(z,0)
=
\log\frac{1}{1-|z|^2}
+
O(1),
\quad
(|z|\to1),
\end{equation*}
where the bounded term is in fact between $0$ and $\log4$.

For arbitrary fixed $w\in\mathbb B^n$, the triangle inequality gives
\begin{equation*}
\left|
\rho_{\mathbb B^n}(z,w)
-
\rho_{\mathbb B^n}(z,0)
\right|
\le
\rho_{\mathbb B^n}(0,w).
\end{equation*}
Thus,
\begin{equation}
\label{eq:ball_general_radial}
\rho_{\mathbb B^n}(z,w)
=
\log\frac{1}{1-|z|^2}
+
O(1),
\end{equation}
where the bounded term depends only on $w$.

It remains to compare $1-|z|^2$ with
$|1-\langle z,\tau\rangle|$. Since $z\in K(e_1,M)$,
\begin{equation*}
\label{eq:ball_upper_comparison}
|1-z_1|
<
M(1-|z|)
\le
M(1-|z|^2).
\end{equation*}
On the other hand,
\begin{equation*}
1-|z|^2
\le
1-|z_1|^2
=
(1-|z_1|)(1+|z_1|)
\le
2(1-|z_1|)
\le
2|1-z_1|.
\end{equation*}
Therefore,
\begin{equation*}
\label{eq:ball_comparability}
\frac{1}{M}|1-z_1|
\le
1-|z|^2
\le
2|1-z_1|.
\end{equation*}
Since $\tau=e_1$,
$|1-z_1|=|1-\langle z,\tau\rangle|$, and hence,
\begin{equation*}
\log\frac{1}{1-|z|^2}
=
\log\frac{1}{|1-\langle z,\tau\rangle|}
+
O(1),
\end{equation*}
where the bounded term depends only on $M$. Combining this with
\eqref{eq:ball_general_radial} proves
\eqref{eq:ball_asymp} and \eqref{eq:ball_asymp_o1}. This completes the proof.
\end{proof}

The preceding lemma is the higher-dimensional analogue of the boundary
asymptotic for the hyperbolic metric in the disk. We can now formulate a
sufficient criterion for extremal hyperbolic growth.

\begin{theorem}
\label{thm:ball}
Let
$g:\mathbb B^n\to\mathbb B^n$ be a hyperbolic holomorphic self-map with
Denjoy--Wolff point $\tau\in\partial\mathbb B^n$ and boundary dilation
coefficient $\alpha\in(0,1)$. Suppose that:

\begin{enumerate}
\item[\emph{(a)}]
for every $z_0\in\mathbb B^n$, the forward orbit
$z_n=g^{\circ n}(z_0)$ converges to $\tau$ non-tangentially, in the sense
that there exists $M>1$, depending possibly on $z_0$, such that
\begin{equation*}
z_n\in K(\tau,M)
\end{equation*}
for all sufficiently large $n$;

\item[\emph{(b)}]
there exists a holomorphic Valiron linearizer
$h:\mathbb B^n\to\mathbb H$ satisfying
\begin{equation}
\label{eq:ball_valiron}
h(g(z))
=
\alpha^{-1}h(z),
\quad z\in\mathbb B^n,
\end{equation}
and, for every $M>1$,
\begin{equation}
\label{eq:ball_nondegenerate}
|h(z)|
\asymp
\frac{1}{|1-\langle z,\tau\rangle|},
\quad
(z\to\tau,\ z\in K(\tau,M)),
\end{equation}
where the comparison constants may depend on $M$.
\end{enumerate}

Then, for every $z_0,w\in\mathbb B^n$,
\begin{equation*}
\label{eq:ball_extremal_rate}
\rho_{\mathbb B^n}
\bigl(g^{\circ n}(z_0),w\bigr)
=
n\log\frac1\alpha
+
O(1),
\quad
(n\to\infty).
\end{equation*}
In particular, $g$ has an extremal hyperbolic rate.
\end{theorem}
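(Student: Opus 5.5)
The plan is to mirror the proof of the implication (ii)$\Rightarrow$(i) in Theorem~\ref{thm:koenigs_criterion}, with Lemma~\ref{lem:ball_asymp} taking the place of Lemma~\ref{lem:koenigs_metric}. Fix $z_0,w\in\mathbb B^n$ and set $z_n=g^{\circ n}(z_0)$.

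\emph{Step 1: locate the orbit.} By hypothesis (a) there are $M>1$ and $N$ such that $z_n\in K(\tau,M)$ for all $n\ge N$. Since the orbit converges to $\tau$, after enlarging $N$ the points $z_n$ also lie in the portion of $K(\tau,M)$ that is close enough to $\tau$ for both Lemma~\ref{lem:ball_asymp} and the comparison \eqref{eq:ball_nondegenerate} to hold with constants depending only on $M$ and $w$.

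\emph{Step 2: combine the two estimates.} Taking logarithms in \eqref{eq:ball_nondegenerate} gives
\[
\log|h(z_n)|=\log\frac{1}{|1-\langle z_n,\tau\rangle|}+O(1).
\]
Lemma~\ref{lem:ball_asymp} gives
\[
\rho_{\mathbb B^n}(z_n,w)=\log\frac{1}{|1-\langle z_n,\tau\rangle|}+O(1).
\]
Subtracting, we get $\rho_{\mathbb B^n}(z_n,w)=\log|h(z_n)|+O(1)$ for $n\ge N$. All the bounded terms depend only on $M$, $w$ and the comparison constants, and not on $n$.

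\emph{Step 3: use the functional equation.} Iterating \eqref{eq:ball_valiron} gives $h(z_n)=\alpha^{-n}h(z_0)$. Since $h$ maps into $\mathbb H$, we have $h(z_0)\neq0$, so $\log|h(z_0)|$ is a finite constant. Therefore
\[
\rho_{\mathbb B^n}(z_n,w)=n\log\frac1\alpha+\log|h(z_0)|+O(1)=n\log\frac1\alpha+O(1).
\]
The finitely many indices $n<N$ do not affect an asymptotic $O(1)$ statement.

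The main point needing care is Step 1. The constants must be uniform along the whole tail of the orbit, which requires using a single Kor\'anyi region $K(\tau,M)$ (with $M$ depending on $z_0$) for both estimates. The condition ``sufficiently close to $\tau$'' must also be met eventually, and convergence $z_n\to\tau$ guarantees this. Beyond that, no Julia-type lower bound is needed, since the two-sided comparison \eqref{eq:ball_nondegenerate} already gives matching upper and lower bounds.
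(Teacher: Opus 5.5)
Your proposal is correct and follows essentially the same argument as the paper. You iterate the functional equation to get $\log|h(z_n)|=n\log\frac1\alpha+\log|h(z_0)|$, use (b) along the tail of the orbit inside a single Kor\'anyi region $K(\tau,M)$ to replace $\log|h(z_n)|$ by $\log\frac{1}{|1-\langle z_n,\tau\rangle|}+O(1)$, and conclude with Lemma~\ref{lem:ball_asymp}; you only combine the two comparison estimates before invoking the functional equation rather than after, and your remark that $h(z_0)\neq 0$ (since $h$ takes values in $\mathbb H$) makes explicit a point the paper uses tacitly.
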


\begin{proof}
Fix $z_0,w\in\mathbb B^n$ and write
\begin{equation*}
z_n=g^{\circ n}(z_0).
\end{equation*}
By the Denjoy--Wolff theorem for the ball,
$z_n\to\tau$. By assumption~\emph{(a)}, the orbit is eventually contained
in a fixed Kor\'anyi region $K(\tau,M)$.

Iterating the functional equation \eqref{eq:ball_valiron} gives
\begin{equation*}
\label{eq:ball_valiron_iterates}
h(z_n)
=
\alpha^{-n}h(z_0).
\end{equation*}
Therefore,
\begin{equation}
\label{eq:ball_log_h}
\log|h(z_n)|
=
n\log\frac1\alpha
+
\log|h(z_0)|.
\end{equation}
Since $z_n\to\tau$ within $K(\tau,M)$, assumption~\emph{(b)} yields
\begin{equation*}
\log|h(z_n)|
=
\log\frac{1}{|1-\langle z_n,\tau\rangle|}
+
O(1),
\end{equation*}
where the $O(1)$ term is independent of $n$. Combining this with
\eqref{eq:ball_log_h}, and absorbing the fixed quantity
$\log|h(z_0)|$ into the bounded term, gives
\begin{equation}
\label{eq:ball_boundary_growth}
\log\frac{1}{|1-\langle z_n,\tau\rangle|}
=
n\log\frac1\alpha
+
O(1).
\end{equation}

Finally, Lemma~\ref{lem:ball_asymp} gives
\begin{equation*}
\rho_{\mathbb B^n}(z_n,w)
=
\log\frac{1}{|1-\langle z_n,\tau\rangle|}
+
O(1).
\end{equation*}
Substituting \eqref{eq:ball_boundary_growth} into this relation yields
\begin{equation*}
\rho_{\mathbb B^n}
\bigl(g^{\circ n}(z_0),w\bigr)
=
n\log\frac1\alpha
+
O(1),
\end{equation*}
as claimed.
\end{proof}

\begin{remark}
The proof of Theorem~\ref{thm:ball} uses only the existence of a
Valiron linearizer satisfying \eqref{eq:ball_valiron} together with the
non-degeneracy condition \eqref{eq:ball_nondegenerate}. The
higher-dimensional Julia--Wolff--Carath\'eodory theorem identifies the
boundary dilation coefficient $\alpha$ and provides the corresponding
first-order boundary behavior, but no additional Julia--Wolff--Carath\'eodory
estimate is needed once the Valiron functional equation and the
non-degeneracy assumption are available.
\end{remark}
%%=============================================================

\subsection{Quasiconformal self-maps}
\label{subsec:qc}

We now consider $K$-quasiconformal self-maps of the unit ball.
Recall that a homeomorphism
$f:\mathbb{B}^n\to\mathbb{B}^n$ is called $K$-quasiconformal,
$K\geq1$, if, after identifying
$\mathbb{C}^n$ with $\mathbb{R}^{2n}$, one has
$f\in W_{\mathrm{loc}}^{1,2n}(\mathbb{B}^n)$ and
\begin{equation*}
\|Df(z)\|^{2n}\leq K J_f(z)
\end{equation*}
for almost every $z\in\mathbb{B}^n$, where $\|Df(z)\|$ denotes the
operator norm of the real differential and $J_f(z)$ is the Jacobian
determinant. We assume, as usual, that $f$ is orientation preserving.
See, for example, \cite{HKV, Vuorinen}.

Unlike the holomorphic case, we do not have a holomorphic
linearization or a boundary angular derivative available in general
for quasiconformal self-maps. Instead, the following criterion
formulates directly the boundary asymptotics of the complex normal
distance that governs the hyperbolic metric in a Kor\'anyi approach
region.

\begin{theorem}
\label{thm:qc}
Let $f:\mathbb{B}^n\to\mathbb{B}^n$ be a $K$-quasiconformal
self-map with no fixed point in $\mathbb{B}^n$. Suppose that there
exist a point $\tau\in\partial\mathbb{B}^n$, a number
$\alpha\in(0,1)$, and $\beta>0$ such that:

\begin{enumerate}
\item[\emph{(i)}]
for every $z\in\mathbb{B}^n$, the forward orbit
$f^{\circ n}(z)$ converges to $\tau$ non-tangentially, in the sense
that it eventually remains in some Kor\'anyi region
$K(\tau,M)$;

\item[\emph{(ii)}]
as $z\to\tau$ non-tangentially,
\begin{equation}
\label{eq:qc_normal_asymptotic}
\frac{|1-\langle f(z),\tau\rangle|}
{|1-\langle z,\tau\rangle|}
=
\alpha\left(
1+O\!\left(|1-\langle z,\tau\rangle|^\beta\right)
\right).
\end{equation}
\end{enumerate}

Then, for every $z_0,w\in\mathbb{B}^n$,
\begin{equation}
\label{eq:qc_extremal_rate}
\rho_{\mathbb{B}^n}
\bigl(f^{\circ n}(z_0),w\bigr)
=
n\log\frac{1}{\alpha}+O(1),
\quad n\to\infty.
\end{equation}
\end{theorem}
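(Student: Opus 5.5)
The plan is to reduce the statement to Lemma~\ref{lem:ball_asymp}. Fix $z_0,w\in\mathbb B^n$ and write $z_k=f^{\circ k}(z_0)$. Then control the quantity
\begin{equation*}
d_k:=|1-\langle z_k,\tau\rangle|
\end{equation*}
by telescoping the one-step ratio in \eqref{eq:qc_normal_asymptotic}. By assumption~(i), there exist $M>1$ and $N$ such that $z_k\in K(\tau,M)$ for all $k\ge N$. Since $z_k\to\tau$, we also have $d_k\to0$. Assumption~(ii) then gives a constant $A>0$, and after enlarging $N$ the bound
\begin{equation*}
\Bigl|\frac{d_{k+1}}{\alpha d_k}-1\Bigr|\le A\,d_k^{\beta}\le \tfrac12,
\qquad k\ge N.
\end{equation*}
We need the $O(1)$ constant in (ii) to be uniform on $K(\tau,M)$ near $\tau$; I read ``as $z\to\tau$ non-tangentially'' in exactly this sense.

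The first step is a geometric decay bound for $d_k$. From the display, $d_{k+1}\le \alpha(1+Ad_k^\beta)d_k$. Since $d_k\to0$, for $k$ large we have $\alpha(1+Ad_k^\beta)\le \alpha':=(1+\alpha)/2<1$. This gives $d_k\le C\alpha'^{\,k}$, and consequently $\sum_k d_k^\beta<\infty$. This is the key point where the Hölder-type rate $\beta>0$ is used: it turns the multiplicative errors into a summable series.

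The second step is the telescoping itself. Write
\begin{equation*}
\log\frac{1}{d_n}=\log\frac1{d_N}+(n-N)\log\frac1\alpha-\sum_{k=N}^{n-1}\log\frac{d_{k+1}}{\alpha d_k}.
\end{equation*}
Using $|\log(1+x)|\le 2|x|$ for $|x|\le\frac12$, each summand is bounded by $2Ad_k^\beta$. The whole sum is therefore bounded by $2A\sum_k d_k^\beta<\infty$, uniformly in $n$. Hence $\log(1/d_n)=n\log(1/\alpha)+O(1)$.

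Finally, because $z_n\in K(\tau,M)$ and $z_n\to\tau$, Lemma~\ref{lem:ball_asymp} gives $\rho_{\mathbb B^n}(z_n,w)=\log(1/d_n)+O(1)$. Combining this with the previous step yields \eqref{eq:qc_extremal_rate}. Quasiconformality plays no role beyond ensuring that $f$ is a well-defined self-map; no Schwarz--Pick argument is needed, since the estimate is proved directly for every pair $z_0,w$.

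The main obstacle is the first step, the a priori geometric decay of $d_k$. It should follow as above from $d_k\to0$ together with the uniformity of the $O(\cdot)$ term on a fixed Korányi region. If that uniformity is in doubt, I would restrict to the fixed region $K(\tau,M)$ supplied by~(i) for the given orbit, and interpret~(ii) on that region.
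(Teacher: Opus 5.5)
Your proposal is correct and follows essentially the same route as the paper. The shared steps are the recurrence $d_{k+1}=\alpha d_k(1+\eta_k)$ with $\eta_k=O(d_k^\beta)$, an a priori geometric decay of $d_k$ that makes $\sum\eta_k$ summable, telescoping the logarithms to get $\log(1/d_n)=n\log(1/\alpha)+O(1)$, and finally Lemma~\ref{lem:ball_asymp} on the Kor\'anyi region. Your explicit bound $|\log(1+x)|\le 2|x|$ and your reading of the $O$-constant in (ii) as uniform on $K(\tau,M)$ only make precise what the paper uses implicitly.
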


\begin{proof}
Fix $z_0,w\in\mathbb{B}^n$ and set
\begin{equation*}
z_n=f^{\circ n}(z_0),
\quad
d_n:=|1-\langle z_n,\tau\rangle|.
\end{equation*}
By assumption~\emph{(i)}, $z_n\to\tau$ non-tangentially and hence,
$d_n\to0$. In particular, there exist $M>1$ and $N_0\in\mathbb{N}$
such that
\begin{equation*}
z_n\in K(\tau,M)
\quad\text{for all }n\geq N_0.
\end{equation*}

Applying~\eqref{eq:qc_normal_asymptotic} to $z=z_n$ gives
\begin{equation}
\label{eq:qc_recurrence}
d_{n+1}
=
\alpha d_n(1+\eta_n),
\quad
\eta_n=O(d_n^\beta),
\end{equation}
as $n\to\infty$.

Choose $\varepsilon>0$ such that
$\alpha+\varepsilon<1$. Since $\eta_n\to0$, there exists
$N\geq N_0$ such that
\begin{equation*}
|1+\eta_n|\leq 1+\frac{\varepsilon}{\alpha},
\quad n\geq N.
\end{equation*}
Because the left-hand side of~\eqref{eq:qc_recurrence} is positive,
we may, after increasing $N$ if necessary, assume that
$1+\eta_n>0$ and
\begin{equation*}
d_{n+1}\leq(\alpha+\varepsilon)d_n,
\quad n\geq N.
\end{equation*}
Consequently,
\begin{equation*}
d_n
\leq
d_N(\alpha+\varepsilon)^{n-N},
\quad n\geq N,
\end{equation*}
and hence,
\begin{equation*}
\label{eq:qc_dn_decay}
d_n=O\bigl((\alpha+\varepsilon)^n\bigr).
\end{equation*}
It follows from~\eqref{eq:qc_recurrence} that
\begin{equation*}
\label{eq:qc_eta_decay}
\eta_n
=
O\bigl((\alpha+\varepsilon)^{n\beta}\bigr),
\end{equation*}
so that
\begin{equation}
\label{eq:qc_eta_summable}
\sum_{n=N}^{\infty}|\eta_n|<\infty.
\end{equation}

Iterating~\eqref{eq:qc_recurrence}, we obtain, for $n>N$,
\begin{equation*}
d_n
=
d_N\alpha^{n-N}
\prod_{k=N}^{n-1}(1+\eta_k).
\end{equation*}
Since $\eta_k\to0$ and~\eqref{eq:qc_eta_summable} holds, the series
\begin{equation*}
\sum_{k=N}^{\infty}\log(1+\eta_k)
\end{equation*}
converges. Taking logarithms therefore gives
\begin{equation*}
\log d_n
=
(n-N)\log\alpha
+
\log d_N
+
\sum_{k=N}^{n-1}\log(1+\eta_k)
=
n\log\alpha+O(1).
\end{equation*}
Equivalently,
\begin{equation}
\label{eq:qc_boundary_rate}
\log\frac{1}{d_n}
=
n\log\frac{1}{\alpha}+O(1).
\end{equation}

Finally, since $z_n$ eventually remains in the Kor\'anyi region
$K(\tau,M)$, Lemma~\ref{lem:ball_asymp} yields
\begin{equation*}
\rho_{\mathbb{B}^n}(z_n,w)
=
\log\frac{1}{|1-\langle z_n,\tau\rangle|}
+
O(1)
=
\log\frac{1}{d_n}+O(1),
\end{equation*}
where the error term is independent of $n$. Combining this with
\eqref{eq:qc_boundary_rate} gives
\begin{equation*}
\rho_{\mathbb{B}^n}(f^{\circ n}(z_0),w)
=
n\log\frac{1}{\alpha}+O(1),
\end{equation*}
as claimed.
\end{proof}

\begin{remark}
\label{rem:qc}
The role of assumption~\emph{(ii)} is to provide, in the
quasiconformal setting, the boundary asymptotic that in the
holomorphic setting is supplied by the Julia--Wolff--Carath\'eodory
theorem. More precisely,~\eqref{eq:qc_normal_asymptotic} states that
the complex normal distance to the boundary point $\tau$ is
asymptotically contracted by the factor $\alpha$ under $f$, with a
H\"older-type error. Along a forward orbit this error is summable,
which yields the bounded remainder in~\eqref{eq:qc_extremal_rate}.

Thus, the mechanism behind the extremal rate is the same as in the
holomorphic case: the relevant boundary scale decreases
geometrically with ratio $\alpha$, while the accumulated deviation
from this linear model remains bounded. The difference is that, for
a general quasiconformal self-map, this boundary behavior must be
imposed as an explicit hypothesis rather than obtained from a
holomorphic angular-derivative theory.
\end{remark}

%%%%%%%%%%%%%%%%%%%%%%%%%%%%%%%%%%%%%%%%%%%%%%%%%%%%%%%%%%%%55
\subsection{Quasiconformal maps on multiply connected domains}
\label{subsec:qc_multiply}

Combining Theorems~\ref{thm:multiply_connected} and~\ref{thm:qc},
we obtain the following extension of the extremal rate theory to
quasiconformal self-maps of multiply connected hyperbolic domains.

\begin{theorem}
\label{thm:qc_multiply}
Let $\Omega\subsetneq\mathbb{C}$ be a hyperbolic domain with finitely
many boundary components, and let
$F:\Omega\to\Omega$ be a $K$-quasiconformal homeomorphism with no fixed
point in $\Omega$. Let
$\pi:\mathbb{D}\to\Omega$ be a universal covering map, with deck
transformation group $\Gamma$, and let
$\widetilde F:\mathbb{D}\to\mathbb{D}$ be a $K$-quasiconformal lift
satisfying
\begin{equation*}
\pi\circ\widetilde F
=
F\circ\pi.
\end{equation*}
Assume that $\tau\in\partial\mathbb{D}$ is an ordinary point of
$\Gamma$, that is, $\tau\notin\Lambda(\Gamma)$, and suppose that:

\begin{enumerate}
\item[\emph{(i)}]
for every $z\in\mathbb{D}$, the forward orbit
$\widetilde F^{\circ n}(z)$ converges to $\tau$ non-tangentially,
that is, it eventually remains in some Stolz region
$S(\tau,M)$, where $M$ may depend on $z$;

\item[\emph{(ii)}]
there exist $\alpha\in(0,1)$ and $\beta>0$ such that
\begin{equation*}
\label{eq:qc_multiply_boundary}
\frac{|1-\overline{\tau}\,\widetilde F(z)|}
{|1-\overline{\tau}\,z|}
=
\alpha
\left(
1+
O\!\left(
|1-\overline{\tau}\,z|^\beta
\right)
\right)
\end{equation*}
as $z\to\tau$ non-tangentially within every Stolz region.
\end{enumerate}

Then $F$ has an extremal hyperbolic rate in $\Omega$ if and only if
$\widetilde F$ has an extremal hyperbolic rate in $\mathbb{D}$.
In either case, the extremal rate is
\begin{equation*}
n\log\frac{1}{\alpha}+O(1).
\end{equation*}
\end{theorem}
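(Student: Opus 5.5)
The plan is to combine the disk argument of Theorem~\ref{thm:qc} (with $n=1$, where Kor\'anyi regions are Stolz regions and $\langle z,\tau\rangle=\overline{\tau}z$) with the covering comparison of Lemma~\ref{lem:covering_distance}.

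\textbf{Step 1: extremal rate of the lift.} Fix $z\in\mathbb D$ and set $z_n:=\widetilde F^{\circ n}(z)$. By hypothesis (i), $z_n$ eventually lies in a fixed Stolz region $S(\tau,M)$. In $\mathbb D$ we have $|1-\overline\tau z|=|\tau-z|$, and $K(\tau,M)=S(\tau,M)$. Hence hypotheses (i)--(ii) are exactly those of Theorem~\ref{thm:qc} with $n=1$. The proof there used only the orbit recurrence $d_{n+1}=\alpha d_n(1+\eta_n)$, the summability of $\eta_n$, and Lemma~\ref{lem:ball_asymp}. It did not use the quasiconformality of the map or a global self-map property beyond the orbit. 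It therefore applies verbatim and gives
\[
\rho_{\mathbb D}(z_n,w)=n\log\frac1\alpha+O(1)
\]
for every $w\in\mathbb D$. One caveat: independence of the initial point via Schwarz--Pick is not available for non-holomorphic $\widetilde F$. The statement, however, is phrased for every $z$ through hypothesis (i), so this is harmless.

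\textbf{Step 2: transfer to $\Omega$.} Since $\pi\circ\widetilde F=F\circ\pi$, we have $\pi(z_n)=F^{\circ n}(\pi(z))$. Because $\tau\notin\Lambda(\Gamma)$ and $z_n\to\tau$ non-tangentially, Lemma~\ref{lem:covering_distance} gives a constant $C$ with
\[
\bigl|\rho_\Omega\bigl(F^{\circ n}(\pi(z)),\pi(w)\bigr)-\rho_{\mathbb D}(z_n,w)\bigr|\le C
\]
for all large $n$. That lemma is purely metric and does not involve holomorphy of the dynamics. Consequently $\rho_\Omega(F^{\circ n}(\pi(z)),\pi(w))-n\log(1/\alpha)$ is bounded if and only if $\rho_{\mathbb D}(z_n,w)-n\log(1/\alpha)$ is bounded. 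This proves the equivalence.

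\textbf{Step 3: all points of $\Omega$, and the main obstacle.} Surjectivity of $\pi$ lets every pair of points of $\Omega$ be written as $\pi(z),\pi(w)$, so the bound holds for all initial and base points of $\Omega$. By Step~1 both sides actually hold, with rate $n\log(1/\alpha)+O(1)$.

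The main obstacle is that, for quasiconformal maps, the base-point and initial-point independence in Definition~\ref{def:extremal_rate} relied on the Schwarz--Pick contraction. A $K$-quasiconformal map is only quasi-isometric, with multiplicative distortion constants that compound under iteration. I would avoid this issue entirely by proving the estimate for each pair $(z,w)$ separately. Base-point changes are still handled by the triangle inequality. Initial-point changes are covered because hypothesis (i) is assumed for every starting point.
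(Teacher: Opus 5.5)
Your proof is correct and follows the paper's route: apply Theorem~\ref{thm:qc} (with $n=1$) to the lift $\widetilde F$ for each initial point, then transfer to $\Omega$ through the covering comparison and the surjectivity of $\pi$. The only differences favor you: you use just the bounded-difference estimate of Lemma~\ref{lem:covering_distance}, whereas the paper asserts an eventual exact identity $\rho_\Omega(F^{\circ n}(z'),w')=\rho_{\mathbb D}(z_n,w)$, which is more than is needed and can fail when the nearest lift of $w'$ near $\tau$ is a nontrivial translate $\gamma(w)$; and you explicitly handle the loss of Schwarz--Pick initial-point independence for non-holomorphic maps.
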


\begin{proof}
Fix arbitrary points $z,w\in\mathbb{D}$ and set
\begin{equation*}
z_n=\widetilde F^{\circ n}(z),
\quad
z'=\pi(z),
\quad
w'=\pi(w).
\end{equation*}
Iterating the lifting identity gives
\begin{equation*}
\label{eq:qc_multiply_lift}
\pi\circ\widetilde F^{\circ n}
=
F^{\circ n}\circ\pi,
\quad n\geq0,
\end{equation*}
and hence,
\begin{equation*}
\pi(z_n)=F^{\circ n}(z').
\end{equation*}

By assumption~\emph{(i)}, the orbit $\{z_n\}$ eventually remains in
a Stolz region $S(\tau,M)$. Since $\tau$ is an ordinary point of the
deck transformation group, the covering-space argument used in the
proof of Theorem~\ref{thm:multiply_connected} applies. In particular,
for every fixed $z,w\in\mathbb{D}$ there exists $N=N(z,w)$ such that
\begin{equation}
\label{eq:qc_multiply_distance}
\rho_\Omega\bigl(F^{\circ n}(z'),w'\bigr)
=
\rho_{\mathbb{D}}(z_n,w)
=
\rho_{\mathbb{D}}
\bigl(\widetilde F^{\circ n}(z),w\bigr)
\end{equation}
for all $n\geq N$.

Indeed, the first equality follows from the fact that, once $z_n$
lies in a sufficiently small neighborhood of the ordinary point
$\tau$, the minimizing lift of the hyperbolic distance from
$F^{\circ n}(z')=\pi(z_n)$ to $w'=\pi(w)$ is represented by the fixed
lift $w$, rather than by a nontrivial deck translate of $w$. This is
precisely the covering-space comparison established in
Theorem~\ref{thm:multiply_connected}.

Now assumptions~\emph{(i)} and~\emph{(ii)} are exactly the hypotheses
of Theorem~\ref{thm:qc} for the lifted map $\widetilde F$. Therefore,
for every $z,w\in\mathbb{D}$,
\begin{equation}
\label{eq:qc_multiply_lift_rate}
\rho_{\mathbb{D}}
\bigl(\widetilde F^{\circ n}(z),w\bigr)
=
n\log\frac{1}{\alpha}+O(1),
\quad n\to\infty.
\end{equation}
Combining~\eqref{eq:qc_multiply_distance} and
\eqref{eq:qc_multiply_lift_rate} gives
\begin{equation*}
\rho_\Omega
\bigl(F^{\circ n}(z'),w'\bigr)
=
n\log\frac{1}{\alpha}+O(1).
\end{equation*}
Since $z',w'\in\Omega$ were arbitrary, $F$ has an extremal
hyperbolic rate in $\Omega$.

Conversely, suppose that $F$ has an extremal hyperbolic rate in
$\Omega$. For arbitrary $z,w\in\mathbb{D}$, the eventual identity
\eqref{eq:qc_multiply_distance} gives
\begin{equation*}
\rho_{\mathbb{D}}
\bigl(\widetilde F^{\circ n}(z),w\bigr)
=
\rho_\Omega
\bigl(F^{\circ n}(\pi(z)),\pi(w)\bigr)
=
n\log\frac{1}{\alpha}+O(1).
\end{equation*}
Thus, $\widetilde F$ has an extremal hyperbolic rate in $\mathbb{D}$.
This proves the equivalence.
\end{proof}

%=============================
%\subsection*{Funding}
%No external funding was received for this research.

%\subsection*{Acknowledgments}

%\subsection*{Funding}

%\subsection*{Data Availability}
\noindent
\textbf{Conflict of Interest.}
The author declares no conflicts of interest.
%=====================================================================
%\noindent
%\textbf{Use of generative AI.} In preparing this work, the author used Grammarly (Free AI Writing Assistance) to help improve the English language and conciseness of the text. After using this tool, the author reviewed and edited the content as needed and takes full responsibility for the final manuscript.


\begin{thebibliography}{99}

\bibitem{Abate19}
M. Abate, Holomorphic Dynamics on Hyperbolic Riemann Surfaces. Berlin, Boston: De Gruyter, 2023.


\bibitem{Abate}
M.~Abate,
Iteration Theory of Holomorphic Maps on Taut Manifolds.
Res. Lecture Notes Math. Complex Anal. Geom.,
Mediterranean Press, Rende, 1989.

\bibitem{Bak-Pom}
I.N. Baker, and Ch. Pommerenke,
\textit{On the iteration of analytic functions in a halfplane. II.}
J. London Math. Soc. (2) \textbf{20} (1979), no. 2, 255--258.

\bibitem{B83}
A.~F.~Beardon,
The Geometry of Discrete Groups.
Grad. Texts in Math., vol.~91,
Springer-Verlag, New York, 1995.

\bibitem{BM}
A.~F.~Beardon and D.~Minda,
The hyperbolic metric and geometric function theory,
in Quasiconformal Mappings and Their Applications,
Narosa Publishing House, New Delhi, 2007, pp.~9--56.

\bibitem{bracci2020}
F.~Bracci, M.~D.~Contreras, and S.~D\'iaz-Madrigal,
Continuous Semigroups of Holomorphic Self-Maps of the Unit Disc.
Springer Monogr. Math.,
Springer, Cham, 2020.

\bibitem{Bracci2010}
F.~Bracci, G.~Gentili, and P.~Poggi-Corradini,
\textit{Valiron's construction in higher dimension}. Rev. Mat. Iberoamericana \textbf{26} (2010), no.~1, 57--76.

\bibitem{Cau}
W.~Cauer,
\textit{The Poisson integral for functions with positive real part}. Bull. Amer. Math. Soc. \textbf{38} (1932), no.~10, 713--717.

\bibitem{Cowen81}
C.~C. Cowen,
\emph{Iteration and the solution of functional equations for functions analytic in the unit disk},
Trans. Amer. Math. Soc. \textbf{265} (1981), no.~1, 69--95.

\bibitem{CowenMacCluer}
C.~C.~Cowen and B.~D.~MacCluer,
Composition Operators on Spaces of Analytic Functions.
Stud. Adv. Math.,
CRC Press, Boca Raton, FL, 1995.

\bibitem{CDG2025parabolic}
M.~D. Contreras, S.~D\'iaz-Madrigal, and P.~Gumenyuk,
\emph{Simultaneous linearization and centralizers of parabolic self-maps I: zero hyperbolic step},
Anal. Math. Phys. \textbf{15} (2025), Paper No.~137.

\bibitem{CDG2024}
M.~D. Contreras, S.~D\'iaz-Madrigal, and P.~Gumenyuk,
\textit{Centralizers of non-elliptic univalent self-maps and the embeddability problem in the unit disc}. arXiv:2311.04134v3 [math.CV]

\bibitem{cruz}
F.~J.~Cruz-Zamorano and K.~Zarvalis,
\textit{Extremal rate of convergence in discrete hyperbolic and parabolic dynamics}.
Preprint, 2025, \url{https://arxiv.org/abs/2510.12501v1}.

\bibitem{cruz26}
F.~J.~Cruz-Zamorano and K.~Zarvalis, \textit{Extremal rate of convergence in continuous dynamics}. J. Lond. Math. Soc. (2) \textbf{114} (2026), no. 1, Paper No. e70627.

\bibitem{elliott}
S.~Elliott and M.~T.~Jury,
\textit{Composition operators on Hardy spaces of a half-plane}. Bull. London Math. Soc. \textbf{44} (2012), no.~3, 489--495.

\bibitem{HKV}
P.~Hariri, R.~Kl\'en, and M.~Vuorinen, Conformally Invariant Metrics and Quasiconformal Mappings.
Springer Monogr. Math.,
Springer, Cham, 2020.

\bibitem{K}
G.~Koenigs,
\textit{Recherches sur les int\'egrales de certaines \'equations fonctionnelles}. Ann. Sci. \'Ecole Norm. Sup. \textbf{3} (1884), no.~1, 3--41.

\bibitem{Ko}
A.~Kor\'anyi,
\textit{Harmonic functions on Hermitian hyperbolic space}. Trans. Amer. Math. Soc. \textbf{135} (1969), 507--516.

\bibitem{Mac}
B. D. MacCluer,
\textit{Iterates of holomorphic self-maps of the unit ball in $\mathbb{C}^N$}.
Michigan Math. J. \textbf{30} (1983), no. 1, 97--106.

\bibitem{moor}
J.~Moorhouse,
\textit{Compact differences of composition operators}. J. Funct. Anal. \textbf{219} (2005), 70--92.

\bibitem{Nev}
R.~Nevanlinna,
\textit{Asymptotische Entwicklungen beschr\"ankter Funktionen und
das Stieltjessche Momentenproblem}. Ann. Acad. Sci. Fenn. Ser. A \textbf{18} (1922), no.~5, 1--53.

\bibitem{Pom79}
Ch. Pommerenke, \textit{On the iteration of analytic functions in a halfplane}. J. London Math. Soc.(2) \textbf{19} (1979), no. 3, 439--447.


\bibitem{Rudin}
W.~Rudin,
Function Theory in the Unit Ball of $\mathbb{C}^n$.
Grundlehren Math. Wiss., vol.~241,
Springer-Verlag, New York, 1980.

\bibitem{Shapiro}
J.~H.~Shapiro,
Composition Operators and Classical Function Theory.
Universitext,
Springer-Verlag, New York, 1993.

\bibitem{Shoikhet}
D.~Shoikhet,
Semigroups in Geometrical Function Theory.
Kluwer Academic Publishers, Dordrecht, 2001.

\bibitem{Vuorinen}
M.~Vuorinen,
Conformal Geometry and Quasiregular Mappings.
Lecture Notes in Math., vol.~1319,
Springer-Verlag, Berlin, 1988.


\end{thebibliography}
\end{document}